\newtheoremstyle{theorem}{}{20pt}{\normalfont}{0pt}{\bfseries}{.}
            {0.8pc}{\thmname{#1}\thmnumber{ #2}\thmnote{ \textup{(#3)}}}
\newtheorem{theorem}{Theorem}[section]
\newtheorem{thm}[theorem]{Theorem}
\newtheorem{prop}[theorem]{Proposition}
\newtheorem{cor}[theorem]{Corollary}
\newtheorem{assume}[theorem]{Assumption}
\theoremstyle{definition}
\newtheorem{example}[theorem]{Example}
\theoremstyle{remark}
\newtheorem{remark}[theorem]{Remark}
\numberwithin{equation}{section}
\newcommand{\bcX}{\mathcal{X}}
\newcommand{\bcY}{{\mathcal{Y}}}
\newcommand{\Tsch}{{Chebyshev }}
\newcommand{\Ylangle}{{_{\cY'}}\langle}
\newcommand{\Yrangle}{\rangle_{\cY}}
\newcommand{\Indx}{\mathcal J}
\DeclareMathOperator{\supp}{supp}
\newcommand{\R}{\mathbb{R}}
\newcommand{\N}{\mathbb{N}}
\newcommand{\C}{\mathbb{C}}
\newcommand{\E}{\mathbb{E}}
\newcommand{\bsb}{{\boldsymbol b}}
\newcommand{\bsi}{{\boldsymbol i}}
\newcommand{\bsy}{{\boldsymbol y}}
\newcommand{\bsz}{{\boldsymbol z}}
\newcommand{\bsx}{{\boldsymbol x}}
\newcommand{\bsv}{{\boldsymbol v}}
\newcommand{\bsA}{{\boldsymbol \Phi}}
\newcommand{\bnu}{{\boldsymbol \nu}}
\newcommand{\bmu}{{\boldsymbol \mu}}
\newcommand{\bbeta}{{\boldsymbol \beta}}
\newcommand{\bsrho}{{\boldsymbol \rho}}
\newcommand{\eps}{\varepsilon}
\newcommand{\Tscheb}{Chebyshev }
\newcommand{\cD}{{\mathcal D}}
\newcommand{\cF}{{\mathcal F}}
\newcommand{\cL}{{\mathcal L}}
\newcommand{\cX}{{\mathcal X}}
\newcommand{\cY}{{\mathcal Y}}
\newcommand{\fa}{{\mathfrak A}}
\newcommand{\bPhi}{{\boldsymbol \Phi}}
\newcommand{\A}{\Phi}
\newcommand{\triple}{\hspace{.5mm} | \hspace{-0.5mm} | \hspace{-0.5mm}  | \hspace{.5mm}}
\newcommand{\bas}{\begin{align*}}
\newcommand{\eas}{\end{align*}}
\newcommand{\ba}{\begin{align}}
\newcommand{\ea}{\end{align}}
\newcommand{\bes}{\begin{equation*}}
\newcommand{\ees}{\end{equation*}}
\newcommand{\be}{\begin{equation}}
\newcommand{\ee}{\end{equation}}
\newcommand{\KL}{{\rm Karh\'{u}nen-Loeve }}
\begin{document}
%%%%%%%%%%%%%%%%%%%%%%%%%%%%%%%%%%%%%%%%%%%%%%%%%%%%%%%%%%%%%%%%%%%%%%%%%%%%%%%%%%%%%%%%%%%%%%%%%%%%%%%%%%%%%%%%%%%%%%%%

\title{
Compressive sensing Petrov-Galerkin approximation
\\ ~of high-dimensional parametric operator equations
}
\date{\today}

\author{Holger Rauhut}
\address{Lehrstuhl C f{\"u}r Mathematik (Analysis), RWTH Aachen University, Pontdriesch 10, 52062 Aachen, Germany}
\curraddr{}
\email{rauhut@mathc.rwth-aachen.de}
\thanks{}

\author{Christoph Schwab}
\address{Seminar for Applied Mathematics, ETH Zurich, R{\"a}mistrasse 101, 8092 Z{\"u}rich, Switzerland}
\curraddr{}
\email{christoph.schwab@sam.math.ethz.ch}
\thanks{}

\subjclass[2010]{Primary 35B30 Secondary: 65N30, 41A58, 94A20}

\begin{abstract}
We analyze the convergence of compressive sensing based
sampling techniques for the efficient evaluation of 
functionals of solutions for a class of 
high-dimensional, affine-parametric, linear operator equations
which depend on possibly infinitely many parameters.
The proposed algorithms are based on
so-called ``non-intrusive''
sampling of the high-dimensional parameter
space, reminiscent of Monte-Carlo sampling.
In contrast to Monte-Carlo, however, a functional of the parametric solution
is then computed via compressive sensing methods from samples of functionals of
the solution. 
A key ingredient in our analysis of independent interest
consists in a generalization of recent results on the approximate 
sparsity of generalized polynomial chaos representations (gpc)
of the parametric solution families,
in terms of the gpc series with respect to tensorized \Tscheb polynomials.
In particular, we establish sufficient conditions on the 
parametric inputs to the parametric operator equation
such that the \Tscheb coefficients of gpc expansion are contained 
in certain weighted $\ell_p$-spaces for $0<p\leq 1$.
Based on this we show that
reconstructions of the parametric solutions computed from
the sampled problems converge, with high probability,  
at the $L_2$, resp.\ $L_\infty$ convergence rates afforded 
by best $s$-term approximations of the parametric solution up to logarithmic factors.
\end{abstract}
%%%%%%%%%%%%%%%%%%%%%%%%%%%%%%%%%%%%%%%%%%%%%%%%%%%%%%%%%%%%%%%%%%%
%%%%%%%%%%%%%%%%%%%%%%%%%%%%%%%%%%%%%%%%%%%%%%%%%%%%%%%%%%%%%%%%%%%
\maketitle

Key Words: 
Compressive sensing,
affine-parametric operator equations, 
parametric diffusion equation,
%Infinite Dimensional Spaces, 
$s$-term approximation, high-dimensional approximation,
%Tensor Product Taylor-, Legendre- and 
Tensorized Chebyshev polynomial chaos approximation.

\allowdisplaybreaks

\section{Introduction}
\label{sec:Intro}

The numerical solution of parametric operator equations 
on high-dimensional parameter spaces
has recently attracted substantial interest,
in particular in uncertainty quantification. 
There, one often 
models uncertainty in input parameters probabilistically:
randomness in the coefficients of 
a partial differential equation
may account for the fact that the true model, i.e., the true coefficient,
is in practice not known exactly. 
Having a full parametric solution of the equation at hand, 
it is straightforward to (re)insert the probabilistic model
and to compute quantities such as the expected solution, covariances and higher moments.
Unfortunately, the parameter domain is often high- or even infinite-dimensional, 
making it computationally hard to approximate
the solution well with classical approaches due to the curse of dimension. 
On the other hand, the classical Monte Carlo method for estimating
the expected solution, say, of a PDE with random input converges at best 
at a rate of $m^{-1/2}$ for $m$ sample evaluations 
(with constants that are independent of the dimension).

Due to the importance of this problem class,
a number of constructive computational approaches have emerged 
in recent years which alleviate or even overcome 
the curse of dimensionality.
A first class of methods can be described as ``greedy'', adaptive
deterministic approximations. By this, we mean that a sequence of 
parametric approximations of the parametric operator equation is computed
\emph{sequentially} by successive numerical solution of instances 
of the parametric operator equation. 
We mention here adaptive stochastic Galerkin methods 
\cite{EGSZ14_1230,EGSZ14_1045,G13_531},
reduced basis approaches (see, eg., \cite{BCDDPW11,BufMadPat12}),
abstract greedy approximation in Banach spaces
and adaptive Smolyak discretizations \cite{SS13_813,SS14_1117}
as well as adaptive interpolation methods (see \cite{CCS2014} and the references there)
and sampling methods \cite{TangIacc2014}.
While adaptive Galerkin discretizations as in \cite{EGSZ14_1230,EGSZ14_1045,G13_531}
are intrusive, the other mentioned approaches are not; 
they are, however, \emph{sequential}
in the sense that they rely on \emph{successive} 
numerical solution of the operator equation on parameter instances.
This is in contrast to, say, (multilevel) Monte-Carlo \cite{MSS}, 
or Quasi-Monte Carlo approaches \cite{DKLS14_1319}
which likewise offer dimension-independent convergence rates for statistical
moments of the solution, and which allow to access the parametric solution
\emph{simultaneously} at a set of samples. These methods do not, however,
allow \emph{recovery} of the parametric solution, 
but only compute statistical quantities
such as the mean solution. Further methods include
sparse, anisotropic collocation methods \cite{NTW}, 
sparse grids \cite{BGActa,GerstGrieb03,notewe08}, 
low rank tensor methods \cite{KhSc} and least squares approximation 
methods based on sample evaluation at randomly chosen parameters \cite{CDL2013,MiNoSchT2014}.

In this article, we introduce and analyze a method for approximating 
functionals of the solution of 
affine parametric elliptic operator equations 
based on ideas from compressive sensing \cite{fora13} by exploiting that the solution 
can be shown to be well-approximated by a sparse expansion
in terms of tensorized \Tscheb polynomials under certain natural 
assumptions \cite{chalch14,codesc10-1,codesc11,Hanse486,Kuo475,ScMCQMC12}. 
Samples of the solutions for specific parameter choices
are computed via Petrov-Galerkin methods.
Recent work related to compressive sensing approaches for 
approximating high-dimensional,
parametric PDEs includes \cite{doow11,pehado14,sasanaderith14,yaka13} 
where also rather detailed numerical experiments were performed.
They indicate that CS-based sampling of the parameter space
(such as developed here) does, indeed, capture rather closely
near optimal $s$-term polynomial chaos approximations 
of the parametric solution. 
However, in contrast to these earlier contributions 
where only a partial analysis was performed, 
we provide rigorous convergence rate bounds which equal, 
up to $\log$-terms, the best $s$-term bounds
for the full discretization, i.e., 
with compressed sensing in parameter space and with a
stable Petrov-Galerkin discretization of the operator.
We use the acronym CSPG for this compressed-sensing Petrov-Galerkin
discretization of parametric operator equations.

Specifically, the idea of the method is to exploit 
(approximate) sparsity of the coefficient sequence in the polynomial chaos expansion
of the parametric solution in terms of \Tscheb polynomials. 
Suppose we have a numerical evaluation of (a functional of) 
the solution at $m$ parameter points.
Reducing to a finite
polynomial chaos expansion (on some possibly large index set), 
the (unknown) \Tscheb coefficient sequence satisfies an underdetermined 
linear system of equations. 
Compressive sensing methods are able to solve such 
underdetermined systems under certain conditions knowing 
that the solution is (approximately) sparse.
This paper aims at providing a detailed analysis of this idea.

The present exposition's focus is on analytical foundations
of the CSPG discretization.
First numerical experiments, confirming and extending the present 
theory, are available in \cite{BBRS15_2095}.
Compressive sensing approaches feature the 
following advantages in comparison to 
deterministic collocation or sampling strategies,
as described in \cite{NTW,CCS2014}.
\begin{itemize}
\item {\bf Parallelizable.} 
The samples of the parameter are taken at random and in advance. 
For this reason, solutions corresponding to these parameter 
samples can easily be computed in parallel,
unlike, for example, strategies which adaptively (``greedily'') 
determine subsequent sampling
points based on previous computations, see e.g.\ \cite{BCDDPW11}.
\item {\bf Nonintrusive.} 
As just said, the method works with solution samples, 
and for computing these any standard method can be applied 
(that guarantees a certain error bound).
No additional implementation is required on the level of the operator equation. 
One only has to add the compressive sensing part which operates independently
of the PDE solver. 
In particular, the CSPG approach is potentially 
very efficient if the PDE solves are significantly more expensive
in comparison to a compressive sensing reconstruction.
\item {\bf Rigorous error bounds.} 
We derive  error bounds for the CSPG discretization
which provide near-optimal convergence rates in terms of the number of computed samples.
\item {\bf Mild structural requirements on index sets.}
Sparse adaptive collocation methods and sparse-grid approximations 
as e.g. in \cite{CCS2014,SS14_1117,chalch14} require structural conditions
on the sets of best $N$-term polynomial chaos approximations, 
such as downward closedness. 
It is known that optimal index sets may not always satisfy
these conditions (see, e.g. \cite{TWZ14}).
The present CSPG approach does not require such conditions
and can capture rather general index sets arising in
$N$-term approximations.
\end{itemize}
Working with random parameter choices, 
the method analyzed here is reminiscent of Monte Carlo methods at first sight. 
However, once the samples are computed, they are combined
very differently as in Monte Carlo methods in order to compute (functionals of) the full parametric solution. 
Moreover, Monte Carlo methods usually only compute expectations or moments rather than
(functionals of) full parametric solutions. 
The convergence rate of MC methods is, intrinsically, limited to $1/2$ by the central limit theorem,
however with constants that are independent of the dimension of the parameter
 domains where the MC sampling takes place. Higher order convergence rates
have been shown recently for certain types of 
quasi Monte-Carlo Galerkin methods in \cite{DKLN14_1134}.
Such methods only aim at computing the integrals of (functionals of) the
solution rather than (functionals of) the full parametric solution.
As we show in the present work, 
convergence rates which are limited only by 
best $s$-term approximation rates of polynomial chaos expansions 
of solutions to countably-parametric operator equations
are achieveable by compressed sensing with 
random sampling from
the probability measure induced by the weight function
of the polynomial chaos -- in our case, the product \Tscheb measure.

\medskip

Ahead, we consider linear,
affine parametric operator equations of the form 
$A(\bsy) u(\bsy) = f$ with 
$$
A(\bsy) = A_0 + \sum_{j \geq 1} y_j A_j,
$$
where the $A_j$ are bounded, linear 
operators and $\bsy = (y_j)_{j\geq 1}$ with
$y_j \in [-1,1]$, $j=1,2,\hdots,$ is a countable sequence of parameters.
For the sake of concreteness, we will now illustrate our ideas and findings for the example of a 
parametric diffusion equation on a bounded Lipschitz domain $D \subset \R^n$ 
(where one should think of $n=1,2,3$). 
For a diffusion coefficient that depends affinely on a parameter 
sequence $\bsy$, such as
\be
\label{eq:aKL}
a(x,\bsy) = \bar{a}(x) + \sum_{j\geq 1} y_j \psi_j(x), \quad x\in D,
\ee
we consider the parametric, elliptic problem
\be \label{eq:Diffusion}
A(\bsy) u := - \nabla \cdot(a(\cdot,\bsy) \nabla u) = f 
\quad {\mbox{ in }} D, 
\quad u|_{\partial D} = 0.
\ee
The expansion \eqref{eq:aKL} may, for instance, 
arise from a \KL 
decomposition of a random field.
The weak formulation of \eqref{eq:Diffusion} in the Sobolev space $V:=H_0^1(D)$ reads:
Given $f\in V^*$, for every $\bsy\in U:= [-1,1]^\N$, find $u(\bsy)\in V$ such that
\be\label{diff:eq:weak}
\int_D a(x,\bsy) \nabla u(x) \cdot \nabla v(x) dx 
= 
\int_D f(x) v(x) dx \quad \mbox{ for all } v \in V.
\ee
We require the {\em uniform ellipticity assumption}:
there exist constants $0<r \leq R <\infty$ such that
\be\label{UEA}
r \leq a(x,\bsy) \leq R \qquad \mbox{ for almost all } x \in D, \mbox{ for all } \bsy \in U .
\ee
The Lax-Milgram Lemma then ensures that for every $\bsy \in U$,
\eqref{diff:eq:weak} has a unique solution $u(\cdot, \bsy) \in V$, which satisfies the a priori estimate
\[
\sup_{\bsy\in U} 
\| u(\bsy) \|_V \leq r^{-1} \|f\|_{V^*}.
\]
Given a bounded linear functional $G : V \to \R$, 
we are interested in a numerical approximation of the function
\[
F(\bsy) := G(u(\bsy)), \qquad \bsy \in U = [-1,1]^\N.
\]
To this end, we consider a high-dimensional \Tscheb expansion of $u(\bsy)$. We introduce the $L^2$-normalized univariate \Tscheb polynomials
\be\label{eq:DefTj}
T_j(t) = \sqrt{2} \cos(j \arccos t), \quad j \in \N, \quad \mbox{ and } \quad T_0(t) \equiv 1. 
\ee
These functions are orthonormal with respect to the probability measure 
$\sigma$ on $[-1,1]$, defined as
\[
d\sigma(t) = \frac{1}{\pi\sqrt{1-t^2}} dt,
\]
that is, $\int_{-1}^1 T_j(t) T_k(t) d \sigma(t) = \delta_{jk}$, $j,k \in \N_0$.
Let $\cF$ be the infinite-dimensional set of multi-indices with finite support
\begin{equation}\label{def:cF}
\cF 
:= 
\{ \bnu = (\nu_1,\nu_2,\hdots), \nu_j \in \N_0 \mbox{ and } \nu_j \neq 0 
\mbox{ for at most finitely many } j 
\}.
\end{equation}
For $\bnu \in \cF$, the tensorized \Tscheb polynomial $T_{\bnu}$ 
is defined as
\begin{equation}\label{tensCheb}
T_{\bnu}(\bsy) = \prod_{j = 1}^\infty T_{\nu_j}(y_j) = \prod_{j \in \supp \bnu} T_{\nu_j}(y_j), 
\quad \bsy = (y_j)_{j\geq 1} \in U = [-1,1]^\N,
\end{equation}
where the product has only finitely many nontrivial factors 
due to \eqref{eq:DefTj} and the definition of $\cF$. 
With the product probability measure
\begin{equation}\label{prod:Cheb:meas}
\eta  
=
\bigotimes_{j \geq 1} \frac{d y_j}{\pi \sqrt{1-y_j^2}}, \quad \bsy \in U,
\end{equation}
the functions $T_{\bnu}$ are orthonormal, i.e., 
for every $\bnu,\bmu \in \cF$,
\begin{equation}\label{Tnu:orth}
\int_U T_{\bnu}(\bsy) T_{\bmu}(\bsy) d\eta(\bsy) 
= 
\delta_{\bnu,\bmu} := \left\{ \begin{array}{l} 1 \quad \mbox{if} \;\;\bnu=\bmu,
\\
0 \quad \mbox{else}.
\end{array}
\right.
\end{equation}
In fact, they form an orthonormal basis for $L_2(U,\eta)$. Here, as usual, $L_p(U,\eta)$, denotes the Lebesgue space of all $p$-integrable 
functions on $U$ endowed with the norm 
$\|F\|_{L_p(U,\eta)} = \left( \int_U |F(\bsy)|^p d\eta(\bsy)\right)^{1/p}$ 
for $0<p<\infty$, with the usual modification for $p=\infty$.
We write the solution $u = u(\bsy)$ of \eqref{diff:eq:weak} as a so-called polynomial chaos expansion 
\begin{equation}\label{Cheb:expand:sol}
u(\bsy) = \sum_{\bnu \in \cF} d_{\nu} T_{\nu}(\bsy)
\end{equation}
with $d_{\nu} \in V = H_0^1(D)$. Generalizing initial contributions in \cite{codesc10-1,codesc11,Kuo475,ScMCQMC12}, where Taylor and Legendre expansions were considered, it was shown 
in \cite{Hanse486} that 
\begin{equation}
\| (\|d_{\bnu} \|)_{\bnu \in \cF} \|_p := \left( \sum_{\bnu \in \cF} \|d_{\bnu}\|_V^p\right)^{1/p} < \infty
\end{equation}
for some $0 < p \leq 1$ provided that 
\begin{equation}\label{cond_psi_lp}
\| ( \| \psi_j\|_\infty )_{j \geq 1} \|_p = \left( \sum_{j \geq 1} \|\psi_j\|_\infty^p \right)^{1/p} < \infty
\end{equation}
holds in addition to \eqref{UEA}. 
In order to simplify the discussion, we now consider a bounded linear functional 
$G: V \to \R$ applied to the solution,
\[
F(\bsy) := G(u(\bsy)), \qquad \bsy \in U. % = [-1,1]^\N.
\]
By linearity and boundedness, we can write
\[
F(\bsy) = \sum_{\bnu \in \cF} g_{\bnu} T_{\bnu}(\bsy)
\]
with $g_{\bnu} = G(d_{\bnu}) \in \R$ and $|g_{\bnu}| \leq \| G \|\, \|d_{\bnu} \|_V$ 
so that $(g_{\bnu})_{\bnu \in \cF} \in \ell_p(\cF)$ under condition \eqref{cond_psi_lp}.
A bound due to Stechkin, see e.g. \cite[Theorems 2.3 \& 2.5]{fora13}, then implies that 
the map $\bsy \mapsto F(\bsy)$
can be well-approximated by a sparse \Tscheb expansion
with $s$ terms, see also \cite{codesc10-1,codesc11}. 
More precisely, 
for $0<p<2$ and for every $s\in \N$
there exists a finite index set
$S \subset \cF$ with $\#S = s$ such that
\begin{equation}\label{best:approx:rate}
\| F - \sum_{\bnu \in S} g_{\bnu} T_{\bnu} \|_{L_2(U,\eta)} \leq s^{1/2-1/p} \|\mathbf{g}\|_p.
\end{equation}
In particular, for $p$ close to $0$, the approximation error 
tends to $0$ with increasing $s$ 
at a rate which is determined only by $p$ 
(the ``compressibility'' of the input data), 
independent of the number of ``active'' variables in the approximation 
and, therefore, free from the curse of dimensionality.

The fact that $F(\bsy)$ is well-approximated by a sparse expansion suggests 
to use compressive sensing for the reconstruction of $F$ from a small
number of samples $F(u(\bsy_1)), \hdots, F(u(\bsy_m))$. 
Assume that we know a priori a finite set $\Indx_0 \subset \cF$ 
for which we are sure that
it contains the support set $S$ corresponding to the best approximation. 
We allow $\Indx_0$ to be significantly larger than the optimal set
$S \subset \Indx_0$. 
Ideally, its size scales polynomially (or at least subexponentially) in $s$. 
Unlike, for example, the
adaptive collocation methods in \cite{CCS2014,chalch14}, the presently proposed 
CSPG approach does not require structural properties (such as ``downward closedness'')
of the set $S$ for its feasibility and optimality (albeit only ``with high probability'').
We will provide explicit descriptions of possible sets $\Indx_0 = \Indx_0^s$ 
later on, see \eqref{def:Indxs},
and estimate their size in Corollary~\ref{cor:size:Indx}.
Then the map $\bsy \mapsto F(\bsy)$
can be well-approximated by a tensorized \Tscheb expansion
with coefficients from $\Indx_0$, i.e.,
\begin{equation}\label{eq:TensTsch}
F(\bsy) \approx \sum_{\bnu \in \Indx_0} g_{\bnu} T_{\bnu}(\bsy).
\end{equation}
Given parameter choices $\bsy_1,\hdots,\bsy_m$, the samples 
$b_\ell := F(\bsy_\ell) = G(u(\bsy_{\ell}))$, $\ell=1,\hdots,m$, 
can be computed (at least approximately) via numerically 
solving the associated diffusion equation and applying the functional $G$. 
With the sampling matrix $\bPhi \in \R^{m \times N}$, $N = \#\Indx_0$,
which is defined component-wise as
\[
\Phi_{\ell,\bnu} = T_{\bnu}(\bsy_\ell), \quad \ell = 1,\hdots,m, \; \bnu \in \Indx_0,
\]
we can write the vector $\mathbf{b}$ of samples as
\[
\mathbf{b} = \bPhi \mathbf{g}, \quad \mbox{ where } \mathbf{g} = (g_{\bnu})_{\bnu \in \Indx_0}.
\]
Since computing a sample $b_\ell = G(u(\bsy_\ell))$ involves the costly task of numerically solving a PDE, we prefer to work with a minimal number $m$ of samples so that $m < N$ and the above system
becomes underdetermined. Therefore, we propose the use of compressive sensing methods for the reconstruction such as
$\ell_1$-minimization, greedy algorithms or iterative hard thresholding, say, 
see e.g.\ \cite{fora13}. 
Rigorous recovery bounds in compressive sensing are usually achieved
for random matrices \cite{fora13} which suggests to choose 
the sampling points $\bsy_1,\hdots,\bsy_m$ independently at random according to the orthogonalization
measure $\eta$ in \eqref{prod:Cheb:meas}. 
In fact, results in \cite{fora13,ra10,rawa12,ruve08,cata06} 
based on the so-called restricted isometry property \cite[Chapter 6]{fora13}, see also below, 
state that an (approximately) $s$-sparse vector $\mathbf{g}$ 
can then be (approximately) recovered
from $\mathbf{b} = \bPhi \mathbf{g}$ via $\ell_1$-minimization 
(and other algorithms) with high probability provided that
\begin{equation}\label{m:bound:classical}
m \geq C K^2 s \log^3(s) \log(N),
\end{equation}
where $K = \max_{\bnu \in \Indx_0} \|T_{\bnu}\|_\infty$. 
We refer to \cite[Theorem 1] {BBRS15_2095} for a statement 
adapted to the present context.
By the definition of tensor product of Chebyshev-polonomials,
$\|T_{\bnu}\|_\infty = 2^{\|\bnu\|_0/2}$, 
where $\|\bnu\|_0$ counts the number of nonzero entries of the index $\bnu \in \cF$,
and hence,
$K = \max_{\bnu \in \Indx_0} 2^{\|\bnu\|_0/2}$. 
If $\max_{\bnu  \in \Indx_0} \|\bnu\|_0 =d$, 
then \eqref{m:bound:classical} reads 
\begin{equation}\label{m:bound:exponential}
m \geq C 2^d s \log^3(s) \log(N)\;.
\end{equation}
For small $d$ we can indeed conclude that compressive sensing approaches approximately 
recover $F$ from a small number of samples.
However, it can often be expected that $\Indx_0$ contains 
indices $\bnu$ with a significant number of 
nonzero entries (corresponding to many
non-trivial factors in the tensor product \eqref{tensCheb}), 
so that the above estimate obeys exponential scaling in $d$. 
In other words, we face the curse of dimension. 

In order to avoid the exponential scaling in $d$ in \eqref{m:bound:exponential}, 
we propose passing to \emph{weighted sparsity} and \emph{weighted $\ell_1$-minimization}, 
as introduced in \cite{rawa13}.
This requires stronger conditions on the expansion coefficients than just plain $\ell_p$-summability, namely
\[
\sum_{\bnu \in \cF} |g_\bnu|^p \omega_{\bnu}^{2-p} < \infty
\]
for a weight sequence 
$ (\omega_\bnu)_{\bnu\in \cF} $ 
that satisfies $\omega_{\bnu} \geq \|T_{\bnu}\|_\infty = 2^{\|\bnu\|_0/2}$. 

A contribution of our paper that chould be of independent interest shows weighted summability 
under a strengthened version of condition \eqref{UEA}. 
Suppose that for some weight sequence $(v_j)_{j \geq 1}$ with $v_j \geq 1$ and 
suitable constants $0 < \tilde{r} \leq \tilde{R} < \infty$, 
we have, for some $0 < p \leq 1$,
\begin{equation}\label{weighted:UEA}
\sum_{j \geq 1} v_j^{(2-p)/p} |\psi_j(x)|  
\leq 
\min\left\{ \bar{a}(x) - \tilde{r}, \tilde{R} - \bar{a}(x) \right\} \quad \mbox{ for all } x \in D
\end{equation}
and additionally
\begin{equation}\label{weighted:lp:assump}
\sum_{j \geq 1} \|\psi_j\|_\infty^p v_j^{2-p}  < \infty,
\end{equation}
then the expansion coefficients $d_{\bnu} \in V$ in \eqref{Cheb:expand:sol} satisfy 
\begin{equation}\label{weighted:lp:bound}
\sum_{\bnu} \|d_{\bnu}\|_V^p \omega_{\bnu}^{2 - p} < \infty, \quad \mbox{ where } 
\omega_{\bnu} = 2^{\|\bnu\|_0/2} \prod_{j \geq 1} v_j^{\nu_j}. 
\end{equation}
Consequently, also the coefficients $g_{\bnu} = G(d_{\bnu})$ satisfy 
$\sum_{\bnu \in \cF} |g_{\bnu}|^p \omega_{\bnu}^{2-p}<\infty$. 
In particular, these
weights satisfy $\omega_\bnu \geq \|T_{\bnu}\|_\infty$ as desired. 

A weighted version of compressive sensing %and a weighted best $s$-term approximation error 
has recently been introduced in \cite{rawa13}, which exactly fits our needs.
For instance, Stechkin's bound extends to the weighted case, see Section~\ref{sec:CSW} and 
\eqref{weighted:Stechkin} below for details. 
Consequently, a finite weighted $\ell_p$-norm of $(g_{\bnu})_{\bnu \in \cF}$ with $p < 1$ 
as in \eqref{weighted:lp:bound}  
implies corresponding convergence rates for (weighted) sparse approximation. 
As recovery method one may use weighted $\ell_1$-minimization \eqref{weighted:l1} 
or a weighted version of iterative hard thresholding, for instance \cite{jo13,fekrra14}. 
If the sampling points $\bsy_1,\hdots,\bsy_m$ are chosen independently at random 
according to the orthogonalization measure $\eta$ in \eqref{prod:Cheb:meas} 
and the weights satisfy $\omega_{\bnu} \geq \|T_{\bnu}\|_\infty$ -- 
as valid for the choice in \eqref{weighted:lp:bound} -- then recovery
guarantees for weighted $\ell_1$-minimization were shown in \cite{rawa13}. 
These require that the number $m$ of samples satisfies
\[
m \geq C s \log^3(s) \log(N).
\]
In contrast to \eqref{m:bound:exponential},
this bound is free from the curse of dimensionality
in the sense that it does not scale with $2^d$ anymore. 
Based on these ingredients, 
we introduce an algorithm (see Section~\ref{sec:CS}) that numerically 
computes an approximation of $F(\bsy) = G(u(\bsy))$ using 
(a relatively small number $m$ of) sample evaluations
$F(\bsy_\ell)$, $\ell=1,\hdots,m$.
We show that the approximation $F^\sharp$ computed by our algorithm satisfies
\begin{align}
\| F - F^\sharp\|_{L_2(U,\eta)} & \leq C \left(\frac{ \log^3(m)\log(N)}{m} \right)^{1/p-1/2}, 
\label{L2bound}
\\
\| F - F^\sharp\|_{L_\infty(U,\eta)} & \leq C \left( \frac{\log^3(m)\log(N)}{m} \right)^{1/p-1} 
\label{Linftybound}
\end{align}
under the conditions \eqref{weighted:UEA} and \eqref{weighted:lp:assump}, 
where $N = \#\Indx_0$ is the cardinality
of the initial finite index set (depending on the desired sparsity level $s$). 
The constant $C$ in \eqref{L2bound}, \eqref{Linftybound}
only depends on $F$ and $p$ through the weighted 
$\ell_p$-norm of the expansion coefficients $(g_{\bnu})$.
Comparing to the rate \eqref{best:approx:rate} for the best $s$-term approximation reveals that \eqref{L2bound} and \eqref{Linftybound}
are optimal up to logarithmic factors.
It may be surprising at first sight that we also obtain an $L_\infty$-bound, 
which for other methods is usually obtained only under additional
regularity assumptions on the functions $\psi_j$.

In the important case that the weights grow polynomially, i.e., $v_j = \beta j^\alpha$ for some 
$\beta > 1, \alpha > 0$, then our proposed choice of $\Indx_0= \Indx_0^s$ satisfies $N =\# \Indx_0^s \leq c_{\alpha,\beta} s^{d_{\alpha,\beta} \log(s)}$ and \eqref{L2bound}, \eqref{Linftybound} 
imply then
\[
\| F - F^\sharp\|_{L_2(U,\eta)} \leq C_{\alpha,\beta} \left( \frac{\log^5(m)}{m} \right)^{1/p-1/2}, \qquad 
\| F - F^\sharp\|_{L_\infty(U,\eta)} \leq C_{\alpha,\beta} \left( \frac{\log^5(m)}{m} \right)^{1/p-1}.
\]
Comparing to the best $s$-term approximation rate \eqref{best:approx:rate}, we achieve an optimal convergence rate up to logarithmic factors, 
using $m$ sample evaluations. 
Polynomially growing weights $v_j= \beta j^\alpha$ may appear when the norms of the functions $\psi_j$ in \eqref{eq:aKL}
(arising for instance via a Karhunen-Lo{\'e}ve expansion of a random field modeling the diffusion $a$)
decay polynomially like $\|\psi_j\|_\infty \leq c j^{-\tau}$ for some $\tau > \alpha + 1$.
We refer to Theorem~\ref{thm:main:func}, Remark~\ref{rem:diff:eq}, Remark~\ref{remk:ChoiWgt}, Corollary~\ref{def:Indxs} and Remark~\ref{rem:Indx:size}
for precise statements.

Our algorithm requires that the sample solutions are computed up to a certain accuracy. 
We propose to use a Petrov-Galerkin (PG) approach for this task.
As usual, convergence rate estimates for the PG discretization errors
require additional smoothness.

In the remainder of the paper, we work in an abstract setting which includes the case of the parametric diffusion equation, but applies
to a significantly larger range of problems. The reader may keep in mind the specific setup outlined in this introduction as a guiding example.

The outline of this paper is as follows.
In Section \ref{sec:AffParOpEqn} we introduce the class of 
affine-parametric operator equations which will be considered
in the sequel and provide background on previous results.
Section~\ref{sec:CSW} is devoted to the background on compressive sensing and in particular
to its recent extension in \cite{rawa13} to weighted sparsity. Section~\ref{sec:WeighEllp} provides the new weighted $\ell_p$-bounds
on the solution coefficients in the polynomial chaos expansion in terms of tensorized \Tscheb polynomials. 
The compressive sensing algorithm for approximating the functional applied to the solution of an affine parametric operator equation
is introduced and analyzed in Section~\ref{sec:CS}.

\section{Background on Affine-Parametric Operator Equations}
\label{sec:AffParOpEqn}
Generalizing the parametric diffusion equation from above 
we introduce a class of affine-parametric operator equations
which depends, at least formally, on a possibly countable set of 
parameters. We aim at numerical methods for the approximate, numerical
solution of these equations. To this end, we propose a combined 
strategy which is based on (Petrov-)Galerkin projection in the physical
variables, and compressive sensing techniques in the high-dimensional
parameter domain. Approaches of this type have attracted interested
in recent years in the area of computational uncertainty quantification
for partial differential equations \cite{doow11,hado14,jaelsa14,pehado14,sasanaderith14,yaka13}.
In this section, we will provide the necessary background information concerning
affine-parametric operator equations and Petrov-Galerkin projections.

%%%%%%%%%%%%%%%%%%%%%%%%%%%%%%%%%%%%%%%%%%%%
\subsection{Affine-Parametric operator equations}
\label{ssec:affparops}
%%%%%%%%%%%%%%%%%%%%%%%%%%%%%%%%%%%%%%%%%%%%%
Consider $\bcX$ and $\bcY$, 
two separable and reflexive Banach spaces.
Unless explicitly stated otherwise, we assume 
$\bcX$ and $\bcY$ to have coefficient field $\mathbb{R}$.
The (topological) duals are denoted by $\bcX'$ and $\bcY'$, respectively. 
As usual, $\cL(\bcX,\bcY')$ is the set of bounded linear operators 
$A:\bcX \to\bcY'$.

We 
aim at solving, for given $f\in \bcY'$ and for 
a parameter sequence $\bsy\in U = [-1,1]^\N$,
the {\em parametric operator equation}
\begin{equation}\label{eq:main}
  A(\bsy)\, u (\bsy) = f,
\end{equation}
where we assume that $A(\bsy)$ depends in an affine manner on $\bsy$, i.e.,
\begin{equation}\label{eq:Baffine}
 A(\bsy) = A_0 + \sum_{j\ge 1} y_j\, A_j 
\end{equation}
with a sequence $\{ A_j \}_{j\geq 0}\subset \cL(\bcX,\bcY')$ 
which we assume to be summable in the sense that
$\sum_{j\geq 1} \| A_j \|_{ \cL(\bcX,\bcY')} < \infty$.
We shall in particular do this by {\em collocation} of \eqref{eq:main},
i.e., by (approximately) solving \eqref{eq:main} for particular instances
of the parameter sequence $\bsy$; hence, conditions are needed which ensure
that the sum in \eqref{eq:Baffine} converges {\em pointwise}, 
i.e., for every $\bsy\in U$, in the sense of $\cL(\bcX,\bcY')$.

We associate with the operators $A_j$ the bilinear
forms $\fa_j(\cdot,\cdot):\bcX\times \bcY \rightarrow \mathbb{R}$ via
$$
  \fa_j(v,w) \,=\, {_{\cY'}}\langle  A_j v, w \rangle_{\cY} \quad \mbox{ for } v\in \cX,\;w\in \cY,
  \quad j=0,1,2,\ldots \;,
$$
where  ${_{\cY'}}\langle  u,w \rangle_{\cY} = u(w)$ for $w \in \cY$ and $u \in \cY'$ as usual.
Similarly, for every parameter instance $\bsy\in U$, 
we associate with $A(\bsy)$ the 
parametric bilinear form 
$\fa(\bsy;\cdot,\cdot): \bcX\times\bcY\to\R$ 
via
\begin{equation}\label{eq:parmbil}
   \fa(\bsy;v,w) \,=\, {_{\cY'}}\langle A(\bsy) v, w\rangle_{\cY} \quad  \mbox{ for } v\in \cX, w\in \cY.
\end{equation}
We work under the following conditions 
on $\{A_j\}_{j\geq 0}$ %\subset \cL(\cX,\cY')$ 
(cp.\ \cite{Kuo475}).
\begin{assume}\label{ass:AssBj}
\begin{enumerate}
\item%
$A_0\in \cL(\bcX,\bcY')$ is boundedly invertible: 
there exists $\mu_0 > 0$ s.t.
\begin{equation}\label{eq:B0infsup} %\tag{\textbf{A1}}
 \inf_{v \in \bcX \setminus \{0\}} \sup_{w \in \bcY \setminus \{0\}}
 \frac{\fa_0(v,w)}{\| v \|_{\bcX} \|w\|_{\bcY}}
 \ge \mu_0,\quad
 \inf_{w \in \bcY \setminus \{0\}}\sup_{v \in \bcX \setminus\{0\}}
 \frac{\fa_0(v,w)}{\| v \|_{\bcX} \|w\|_{\bcY}}
 \ge \mu_0.
\end{equation}
\item%
there exists a constant $0 < \kappa < 1$ such that
\begin{equation} \label{eq:Bjsmall} %\tag{\textbf{A2}}
 \sum_{j\geq 1} \beta_{0,j} \leq \kappa \;,
 \quad\mbox{where}\quad
 \beta_{0,j} \,:=\, \| A_0^{-1} A_j \|_{\cL(\cX,\cX)}\;,
 \quad j=1,2,\ldots
 \;.
\end{equation}
\end{enumerate}
\end{assume}
The parametric diffusion equation \eqref{eq:Diffusion} falls into this setup 
with $A_0 u:= -\nabla \cdot (\bar{a} \nabla u)$ 
and $A_j u := - \nabla \cdot (\psi_j \nabla u)$.
A symmetric variational form results with
the choice 
$\bcX = \bcY = H_0^1(D)$ and $\bcX' = \bcY' = H^{-1}(D)$. 
Note that
\[
\beta_{0,j} = \|A_0^{-1} A_j \|_{\cL(\cX,\cX)} 
\leq  
\| A_j \|_{\cL(\cX,\cX')}  \|A_0^{-1}\|_{\cL(\cX',\cX)}
\leq \|\psi_j\|_\infty (\inf_{x \in D} |\bar{a}(x)|)^{-1},
\]
so that \eqref{eq:Bjsmall} is implied by
\be\label{eq:Ksmall}
\sum_{j\geq 1} \|\psi_j\|_\infty \leq \kappa \inf_{x \in D} |\bar{a}(x)|\;.
\ee
This condition is slightly stronger than the uniform ellipticity assumption \eqref{UEA}. 
This slight drawback of the described general setup
is compensated by the fact that the results to be developed will hold
for operator equations beyond the model parametric diffusion equation
\eqref{eq:Diffusion} such as linear elasticity,
Helmholtz and Maxwell equations with affine-parametric operators;
we refer to \cite{HS13_492} for a class of 
parametric, elliptic multiscale problems, and to 
\cite{Hanse1085} for affine-parametric, nonlinear initial value problems.
The possibility of choosing $\cX \ne \cY$ accommodates
saddle point formulations of parabolic evolution equations.
\begin{example}\label{expl:Parabolic}
In a bounded domain $D\subset \R^n$, and in the time interval
$I=(0,T)$ for a time-horizon $0<T<\infty$, 
and for the affine-parametric, elliptic operator 
$A(\bsy)$ in \eqref{eq:Diffusion},
for given $f(x,t)$ and for given $u_0\in L_2(D)$, we
consider the parametric, linear parabolic evolution problem
\be\label{eq:ParEvol}
B(\bsy)u := \partial_t u -  A(\bsy) u = f \;,
\quad 
u(\cdot,t)|_{\partial D} = 0 
\quad \mbox{in}\quad (0,T)\times D\;,
\quad 
u(\cdot,0) = u_0 \;.
\ee
We assume that $A(\bsy)$ is given by an expansion as in \eqref{eq:Baffine}. Then, 
the parabolic, parametric evolution operator $B(\bsy)$ in \eqref{eq:ParEvol}
was shown in \cite[Sec. 5]{SS09_395} to fit into the 
abstract Assumption~\ref{ass:AssBj} 
with 
$\bcX = L_2(I;V)\cap H^1(I;V')$, $\bcY = L_2(I;V)\times H$, 
$V=H^1_0(D)$ and $H=L_2(D)$. Here, the parametric bilinear form
$ \fa(\bsy;w,v) $ is defined, for $v=(v_1,v_2)\in L_2(I;V)\times H$,
by
$$
 \fa(\bsy;w,v)
 : = 
\int_I 
\langle{ \textstyle \frac{d w}{dt}}(t), v_1(t)\rangle_H 
+ 
\int_D a(x,\bsy) \nabla w\cdot \nabla v_1 dx
dt
+ 
\langle w(0),v_2\rangle_H
\;,
$$
where $a(x,\bsy)$ denotes the affine-parametric, isotropic diffusion
coefficient in \eqref{eq:aKL}. 
Then, under Assumption~\eqref{UEA} Condition (i) 
in Assumption~\ref{ass:AssBj} holds true; we refer to \cite[Appendix]{SS09_395} for a proof.
Condition (ii) in Assumption \ref{ass:AssBj} is implied by \eqref{eq:Ksmall},
for sufficiently small $\kappa$.
\end{example}
The next result states existence and boundedness of solutions $u(\bsy)$.
\begin{prop}[{cp.~\cite[Theorem 2]{ScMCQMC12}}] \label{prop:BsigmaInv} 
Under Assumption~\ref{ass:AssBj}, for every
realisation $\bsy\in U$ of the parameter vector, the affine parametric
operator $A(\bsy)$ given by \eqref{eq:Baffine} is boundedly invertible.
In particular, for every $f \in \bcY'$ and for every $\bsy \in U$, 
the parametric operator equation
\begin{equation} \label{eq:parmOpEq}
 \mbox{find} \quad u(\bsy) \in \bcX:\quad
 \fa(\bsy;u(\bsy), w) \,=\,  {_{\bcY'}}\langle  f,w \rangle_{\bcY}
 \quad
 \mbox{ for all } w \in \bcY
\end{equation}
admits a unique solution $u(\bsy)$ which satisfies with  
$\mu = (1 - \kappa)\mu_0$ the uniform a-priori estimate
\begin{equation}\label{eq:apriori}
\sup_{\bsy\in U} \| u(\bsy) \|_{\bcX}
 \,\leq\,
 \frac{1}{\mu}
 \, \| f \|_{\bcY'} 
\;.
\end{equation}
\end{prop}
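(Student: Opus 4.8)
The plan is to establish bounded invertibility of $A(\bsy)$ by a Neumann-series perturbation of the nominal operator $A_0$, and then to read off existence, uniqueness, and the a-priori bound from the resulting explicit inverse. The starting point is the factorization
\[
A(\bsy) \,=\, A_0 + \sum_{j\geq 1} y_j A_j \,=\, A_0\bigl(I + T(\bsy)\bigr), \qquad T(\bsy) := \sum_{j\geq 1} y_j\, A_0^{-1}A_j ,
\]
which becomes legitimate once $A_0$ is known to be invertible: each $A_0^{-1}A_j$ maps $\bcX$ into $\bcX$, so $T(\bsy) \in \cL(\bcX,\bcX)$ and $A_0(I+T(\bsy))v = A_0 v + \sum_j y_j A_j v = A(\bsy)v$. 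First I would show that condition (i) of Assumption~\ref{ass:AssBj}, namely the two inf-sup conditions \eqref{eq:B0infsup}, yields that $A_0 \in \cL(\bcX,\bcY')$ is a bijection with $\|A_0^{-1}\|_{\cL(\bcY',\bcX)} \leq \mu_0^{-1}$. This is precisely the Banach--Ne\v{c}as--Babu\v{s}ka theorem: the first inf-sup condition makes $A_0$ bounded below, giving injectivity with closed range and the quantitative bound $\|A_0 v\|_{\bcY'} \geq \mu_0 \|v\|_{\bcX}$, while the second, combined with the reflexivity of $\bcX$ and $\bcY$ assumed at the outset, yields that the range of $A_0$ is all of $\bcY'$.

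Next I would bound $T(\bsy)$ uniformly in $\bsy$. Since $\bsy \in U = [-1,1]^\N$ we have $|y_j| \leq 1$, so by the triangle inequality and condition (ii) of Assumption~\ref{ass:AssBj},
\[
\|T(\bsy)\|_{\cL(\bcX,\bcX)} \,\leq\, \sum_{j\geq 1} |y_j|\,\|A_0^{-1}A_j\|_{\cL(\bcX,\bcX)} \,\leq\, \sum_{j\geq 1}\beta_{0,j} \,\leq\, \kappa < 1,
\]
where the series converges absolutely in the Banach space $\cL(\bcX,\bcX)$. The Neumann series then shows that $I + T(\bsy)$ is boundedly invertible with $\|(I+T(\bsy))^{-1}\|_{\cL(\bcX,\bcX)} \leq (1-\kappa)^{-1}$, a bound independent of $\bsy$.

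Combining the two steps, $A(\bsy) = A_0(I+T(\bsy))$ is a composition of bijections, hence boundedly invertible with $A(\bsy)^{-1} = (I+T(\bsy))^{-1}A_0^{-1}$. This gives existence and uniqueness of the solution $u(\bsy)$ of \eqref{eq:parmOpEq}, since by \eqref{eq:parmbil} the variational problem is equivalent to the operator equation $A(\bsy)u(\bsy) = f$ in $\bcY'$. The a-priori estimate then follows by composing norm bounds,
\[
\|u(\bsy)\|_{\bcX} = \|(I+T(\bsy))^{-1}A_0^{-1}f\|_{\bcX} \leq (1-\kappa)^{-1}\mu_0^{-1}\|f\|_{\bcY'} = \mu^{-1}\|f\|_{\bcY'},
\]
and, since none of the constants depend on $\bsy$, taking the supremum over $U$ yields \eqref{eq:apriori}. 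The one genuinely delicate point is the first step: deriving $\|A_0^{-1}\| \leq \mu_0^{-1}$ from the \emph{pair} of inf-sup conditions rather than from coercivity, where reflexivity is essential in order to upgrade density of the range to fullness of the range (this is exactly what accommodates the non-symmetric case $\bcX \neq \bcY$, such as the parabolic Example~\ref{expl:Parabolic}). Everything after that is a routine geometric-series estimate, made uniform over $U$ solely by the constraint $|y_j| \leq 1$.
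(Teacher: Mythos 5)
Your proposal is correct and takes essentially the same route as the paper: the factorization $A(\bsy) = A_0\bigl(I + \sum_{j\geq 1} y_j A_0^{-1}A_j\bigr)$, the bound $\|A_0^{-1}\|_{\cL(\bcY',\bcX)} \leq \mu_0^{-1}$ extracted from the two inf-sup conditions \eqref{eq:B0infsup} via the Banach--Ne\v{c}as--Babu\v{s}ka theorem, and a uniform Neumann-series estimate under \eqref{eq:Bjsmall}, which is precisely the argument the paper itself runs (in complexified form, on polydiscs) in Section~\ref{sec:AnCont} to obtain \eqref{eq:aprioriz}, of which \eqref{eq:apriori} is the special case $\rho_j \equiv 1$, $\delta = 1-\kappa$.
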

%

%%%%%%%%%%%%%%%%%%%%%%%%%%%%%%%%%%%%%%%%%%%%%%%%%%%%%%%%%%%%%55
\subsection{Parametric and spatial regularity of solutions}
\label{ssec:anadepsol}

To obtain convergence rates of (Petrov-)Galerkin discretizations,
we introduce 
{\em scales of smoothness spaces} 
$\{ \cX_t \}_{t \geq 0}$, $\{ \cY_t \}_{t\geq 0}$, 
$\{ \cX'_t \}_{t \geq 0}$, $\{ \cY'_t \}_{t\geq 0}$
with
\begin{equation}\label{eq:SmScal}
\begin{aligned}
 \cX &= \cX_0 \supset \cX_1 \supset \cX_2 \supset \cdots\;,
 &\cY &= \cY_0 \supset \cY_1 \supset \cY_2 \supset \cdots\;,
 \quad\mbox{and}
 \\
 \cX' &= \cX'_0 \supset \cX'_1 \supset \cX'_2 \supset \cdots\;,
 &\cY' &= \cY'_0 \supset \cY'_1 \supset \cY'_2 \supset \cdots
 \;.
\end{aligned}
\end{equation}
For noninteger values of $t$, we define the scales 
$\{ \cX_t \}_{t\geq 0}$ and $\{ \cY'_t \}_{t\geq 0}$
by interpolation.
In the particular case of scalar, elliptic self-adjoint operators, 
$\cX_t  = \cY_t$.
We formalize the parametric regularity hypothesis below.
\begin{assume}\label{ass:XtYt}
There exists $\bar{t} > 0$ such that the following conditions hold:
\begin{enumerate}
\item 
For every $t$ satisfying $0 < t \le\bar{t}$, 
we have uniform parametric regularity
\begin{equation}\label{eq:Regul}
 \sup_{\bsy\in U} \| A(\bsy)^{-1} \|_{\cL(\cY'_t, \cX_t)} < \infty
 \quad\mbox{and}\quad
 \sup_{\bsy\in U} \| (A^*(\bsy))^{-1} \|_{\cL(\cX'_{t}, \cY_{t})}  
< 
\infty.
\end{equation}
\item
For $0< t \leq \bar{t}$ there exists $0<p_t< 1$ such that
\begin{equation} \label{eq:assump_t}
 \sum_{j \ge 1} \| A_j \|^{p_t}_{\cL(\cX_t,\cY'_t)} < \infty,
\end{equation}
and $0< p_0 \le p_t \le p_{\bar{t}}<1$.
\item 
We assume that the operators $A_j$ are enumerated such that the sequence
$\bbeta_0$ in \eqref{eq:Bjsmall} satisfies
\begin{equation} \label{eq:ordered}
  \beta_{0,1} \ge \beta_{0,2} \ge \cdots \ge \beta_{0,j} \ge \, \cdots .
\end{equation}
\end{enumerate}
\end{assume}

\medskip

\begin{remark}\label{rmk:AinvNeum}
Under condition \eqref{eq:B0infsup}, $A_0 : \bcX \to \bcY'$ is boundedly invertible,
so that for every $\bsy\in U$ we may write
$A(\bsy) = A_0(I + \sum_{j\geq 1}y_j A_0^{-1}A_j)$.
Then, for \eqref{eq:Regul} to hold it is sufficient that 
$A_0^{-1}\in \cL(\bcY'_t,\bcX_t)$, $A_j \in \cL(\bcX_t,\bcY'_t)$
and that
\be\label{eq:betatsmall}
  \sum_{j\ge1} \beta_{t,j}  < 1\;,\quad 
\mbox{where} \;\; \beta_{t,j} := \| A_0^{-1}A_j \|_{\cL(\bcX_t,\bcX_t)} 
\;.
\ee
Furthermore, \eqref{eq:assump_t} is equivalent to
\[
\| \bbeta_t \|_{\ell^{p_t}}^{p_t} = \sum_{j\geq 1} \beta_{t,j}^{p_t} < \infty, 
\]
since, for every $j\geq 1$,
\begin{equation}\label{eq:Aknorm}
  \| A_0 \|_{\cL(\cX_t,\cY_t')}^{-1} 
  \le \frac{\| A_0^{-1} A_j \|_{\cL(\bcX_t,\bcX_t)}}{ \| A_j \|_{\cL(\bcX_t,\bcY'_t)} }
  \le \| A_0^{-1} \|_{\cL(\bcY'_t, \bcX_t)}
\;.
\end{equation}
\end{remark}
%%%%%%%%%%%%%%%%%%%%%%%%%%%%%%%%%%%%%%%%%%%%%%%%%%%%%%%%%%%%%%
\begin{remark}\label{rmk:ParReg}
Assumption \ref{ass:XtYt}(i) 
on the uniform (w.r.t. $\bsy\in U$) 
boundedness of the inverse $(A(\bsy))^{-1}$ has the following 
uniform regularity implication. 
For $0 < t \leq \bar{t}$, $\bsy\in U$, $f\in \cY'_t$ and $H\in\cX'_{t}$
define $u(\bsy) = (A(\bsy))^{-1}f$ and $w(\bsy) = (A^*(\bsy))^{-1}H$. 
Then
there exist constants $C_t,C'_{t}>0$, independent of $f$ and $H$, 
such that
\begin{equation}\label{ass:a priory}
 \sup_{\bsy\in U} \|u(\bsy)\|_{\cX_t} \le C_t \|f\|_{\cY'_t}
 \quad\mbox{and}\quad
 \sup_{\bsy\in U} \|w(\bsy)\|_{\cY_{t}} \le C'_{t} \|H\|_{\cX'_{t}}
\;.
\end{equation}
\end{remark}
In \cite{NS13_782}, for the  smoothness scales 
$\{ \bcX_t \}_{t\geq 0}$ and $\{ \bcY'_t\}_{t\geq 0}$ 
being weighted Sobolev spaces or Besov spaces
in certain bounded Lipschitz polyhedra $D\subset \R^3$,
the regularity \eqref{ass:a priory} was established
for second order parametric, elliptic systems. 
For these spaces, 
simplicial finite elements in $D$ on meshes with proper refinement
towards corners and edges of the Lipschitz polyhedron $D$ 
will yield best possible convergence rates 
for $0<t\leq \bar{t}$ where $\bar{t}$ depends on the regularity
of the data and on the degree of the elements.

%%%%%%%%%%%%%%%%%%%%%%%%%%%%%%%%%%%%%%%%%%%%%%%%
\subsection{Petrov-Galerkin discretization}
\label{ssec:GalDisc}
%%%%%%%%%%%%%%%%%%%%%%%%%%%%%%%%%%%%%%%%%%%%%%%%
For a given parameter value $\bsy\in U$ 
we solve the operator equation \eqref{eq:main} 
approximately by {\em Petrov-Galerkin discretization} 
(``PG discretization'' for short). 
This will be one building block
of the compressive-sensing Petrov-Galerkin discretization 
of \eqref{eq:main} to be developed below.

Specifically, 
we consider $\{ \cX^h \}_{h>0}\subset\cX$ and $\{ \cY^h \}_{h>0}\subset\cY$,
two one-parameter families of nested,
finite dimensional subspaces which are 
dense in $\cX$ and in $\cY$, respectively, 
as $h\downarrow 0$. 
We assume that the 
subspace families $\{ \cX^h \}_{h>0}\subset\cX$
and $\{ \cY^h \}_{h>0}\subset\cY$
have the {\em approximation properties}:
for $0 < t \leq \bar{t}$, %, $0 < t' \leq \bar{t'}$, 
there exist constants $C_t,C'_{t} > 0$ such that
for all
$0< h \leq 1$, all $u \in \cX_t$ and all $w \in \cY_{t}$ 
holds
\begin{equation}\label{eq:apprprop}
\begin{array}{rcl}
\displaystyle
\inf_{u^h\in \cX^h} \| u - u^h \|_{\cX}
& \leq &
C_t h^t \| u \|_{\cX_t},\\
\displaystyle
\inf_{w^h\in \cY^h} \| w - w^h \|_{\cY}
& \leq &
C_{t}' h^{t} \| w \|_{\cY_{t}} 
\;.
\end{array}
\end{equation}
We will also assume that
the subspace sequences $\{ \cX^h \}_{h>0}\subset\cX$ and 
$\{ \cY^h \}_{h>0}\subset\cY$ are 
{\em uniformly inf-sup stable}.
This is to say that there exist $\bar{\mu} > 0$
and $h_0 > 0$ such that for every $0<h \leq h_0$, 
the uniform 
discrete inf-sup conditions
\begin{align}
\label{eq:Bhinfsup1}
\inf_{v^h \in \bcX^h \setminus \{0\}} \sup_{w^h \in \bcY^h \setminus \{0\}}
\frac{\fa(\bsy;v^h,w^h)}{\| v^h \|_{\bcX} \|w^h\|_{\bcY}}
& \geq \bar{\mu}  > 0 \quad \mbox{ for all } \bsy \in U, \\
\label{eq:Bhinfsup2}
\inf_{w^h \in \bcY^h \setminus \{0\} } \sup_{v^h \in \bcX^h \setminus \{0\}}
\frac{\fa(\bsy;v^h,w^h)}{\| v^h \|_{\bcX} \|w^h\|_{\bcY}}
& \geq \bar{\mu}>0 \quad \mbox{ for all } \bsy \in U
\end{align}
hold.
\begin{prop}\label{prop:Galerkin}
Assume the \emph{approximation property}  \eqref{eq:apprprop},
the uniform stability \eqref{eq:Bhinfsup1}, \eqref{eq:Bhinfsup2},
and uniform parametric regularity \eqref{eq:Regul}
in Assumption~\ref{ass:XtYt}(i) hold.
This implies existence, uniqueness and 
asymptotic \emph{quasioptimality}
of Petrov-Galerkin approximations: 
there exists $h_0 > 0$ such that
for every $0 < h \leq h_0$ and for every $\bsy \in U$, 
the Galerkin approximations $u^h(\bsy)\in\cX^h$, given by
\begin{equation} \label{eq:parmOpEqh}
\mbox{find} \; u^h(\bsy) \in \bcX^h \mbox{ satisfying }
\quad
\fa(\bsy;u^h(\bsy),w^h) 
= 
{_{\cY'}} \langle f, w^h \rangle_{\cY}
\quad
\mbox{ for all } w^h\in \bcY^h,
\end{equation}
are well defined, and stable, i.e., they satisfy the 
uniform a-priori estimate
\begin{equation}\label{eq:FEstab}
 \| u^h(\bsy) \|_{\bcX} \,\le\, \frac{1}{\bar{\mu}}\, \| f \|_{\bcY'} \quad 
\mbox{ for all } \bsy \in U.
\end{equation}
Moreover, for $0<t\le \bar{t}$ there exists a constant $C_t>0$ such that
\begin{equation} \label{eq:reg1}
 \| u(\bsy) - u^h(\bsy) \|_{\cX} \,\le\, C_t\, h^t\, \| u(\bsy)\|_{\cX_t} 
                \quad \mbox{ for all } \bsy \in U.
\end{equation}
\end{prop}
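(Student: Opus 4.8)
The statement to prove is Proposition~\ref{prop:Galerkin}, which is the standard quasi-optimality (Céa-type) result for Petrov--Galerkin discretizations of an inf-sup stable problem, made uniform in the parameter $\bsy \in U$. The plan is to proceed in three stages: first establish well-posedness and the a-priori bound \eqref{eq:FEstab} from the discrete inf-sup conditions; then derive a quasi-optimality estimate comparing $u^h(\bsy)$ to the best approximation in $\cX^h$; finally combine this with the approximation property \eqref{eq:apprprop} to obtain the rate \eqref{eq:reg1}. Throughout, the point is to track that every constant is uniform in $\bsy$, which follows because the inf-sup constant $\bar\mu$ in \eqref{eq:Bhinfsup1}, \eqref{eq:Bhinfsup2} is assumed uniform over $U$.

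First I would fix $0 < h \le h_0$ and $\bsy \in U$, and argue existence and uniqueness. Since $\cX^h$ and $\cY^h$ are finite dimensional with $\dim \cX^h = \dim \cY^h$ (a consequence of the two-sided discrete inf-sup conditions), it suffices to show injectivity of the discrete operator. If $\fa(\bsy; v^h, w^h) = 0$ for all $w^h \in \cY^h$, then \eqref{eq:Bhinfsup1} forces $\| v^h \|_{\bcX} = 0$, hence uniqueness, and by the dimension count, existence. For the a-priori bound, I would take the discrete solution $u^h(\bsy)$ of \eqref{eq:parmOpEqh} and apply \eqref{eq:Bhinfsup1}:
\[
\bar\mu \, \| u^h(\bsy) \|_{\bcX}
\le
\sup_{w^h \in \bcY^h \setminus \{0\}}
\frac{\fa(\bsy; u^h(\bsy), w^h)}{\| w^h \|_{\bcY}}
=
\sup_{w^h \in \bcY^h \setminus \{0\}}
\frac{{_{\cY'}}\langle f, w^h \rangle_{\cY}}{\| w^h \|_{\bcY}}
\le \| f \|_{\bcY'},
\]
which gives \eqref{eq:FEstab} after dividing by $\bar\mu$.

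For quasi-optimality, the standard route is the following. Let $u(\bsy)$ be the exact solution from Proposition~\ref{prop:BsigmaInv}. By Galerkin orthogonality, $\fa(\bsy; u(\bsy) - u^h(\bsy), w^h) = 0$ for all $w^h \in \cY^h$. For an arbitrary $v^h \in \cX^h$, I would write $u^h(\bsy) - v^h \in \cX^h$ and apply the discrete inf-sup condition to this element, then insert the orthogonality relation to replace $u^h(\bsy)$ by $u(\bsy)$, and finally use boundedness of $\fa(\bsy; \cdot, \cdot)$ (which is uniform in $\bsy$ since $\sum_j \| A_j \|_{\cL(\bcX,\bcY')} < \infty$, bounding the operator norm of $A(\bsy)$ uniformly over $U$). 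This yields $\| u(\bsy) - u^h(\bsy) \|_{\bcX} \le (1 + C_{\mathrm{bd}}/\bar\mu) \inf_{v^h \in \cX^h} \| u(\bsy) - v^h \|_{\bcX}$ with constants independent of $\bsy$. I would then invoke the approximation property \eqref{eq:apprprop} with $u = u(\bsy) \in \cX_t$, using the uniform regularity bound $\sup_{\bsy} \| u(\bsy) \|_{\cX_t} < \infty$ implicit in \eqref{eq:Regul} (equivalently \eqref{ass:a priory}), to arrive at \eqref{eq:reg1} with a constant $C_t$ uniform over $U$.

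The only genuinely delicate point, rather than an obstacle, is bookkeeping the uniformity of all constants in $\bsy$; the abstract inf-sup machinery is classical. I would note that the boundedness constant for $\fa(\bsy; \cdot, \cdot)$ is controlled by $\| A_0 \|_{\cL(\bcX,\bcY')} + \sum_{j \ge 1} \| A_j \|_{\cL(\bcX,\bcY')}$ uniformly in $\bsy \in U = [-1,1]^\N$, so all intermediate constants are $\bsy$-independent. The regularity input needed to apply \eqref{eq:apprprop} is exactly \eqref{eq:Regul}, which guarantees $u(\bsy) \in \cX_t$ with uniformly bounded $\cX_t$-norm, completing the argument.
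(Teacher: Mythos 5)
Your proposal is correct, and it is exactly the classical Babu\v{s}ka--Ne\v{c}as/C\'ea-type argument (discrete inf-sup $\Rightarrow$ injectivity plus equal dimensions $\Rightarrow$ unique solvability and stability; Galerkin orthogonality plus uniform boundedness of $\fa(\bsy;\cdot,\cdot)$ via $\sum_{j\geq 1}\|A_j\|_{\cL(\bcX,\bcY')}<\infty$ $\Rightarrow$ quasi-optimality uniform in $\bsy$; approximation property \eqref{eq:apprprop} $\Rightarrow$ the rate \eqref{eq:reg1}) that the paper itself invokes implicitly, since it states Proposition~\ref{prop:Galerkin} without proof as a standard result. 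Your bookkeeping of the $\bsy$-uniformity of all constants is precisely the point that makes the classical argument apply here, so nothing is missing.
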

\begin{remark}\label{remk:UniDisInfSup}
Under Assumption~\ref{ass:AssBj}, items (i) and (iii),
the validity of the discrete inf-sup
conditions for the nominal bilinear form $\fa_0(\cdot,\cdot)$,
see~\eqref{eq:B0infsup}, with constant
$\bar{\mu}_0>0$ independent of $h$, 
implies the uniform discrete inf-sup conditions for the bilinear form
$\fa(\bsy;\cdot,\cdot)$ with $\bar{\mu} = (1-\kappa) \bar{\mu}_0 > 0$,
as assumed in Proposition \ref{prop:Galerkin}.
\end{remark}

For proving superconvergence of
continuous, linear functionals $G \in \cX'_{t}$,
we also assume uniform inf-sup stability of the pairs
$\bcX^h\times \bcY^h$ for the adjoint problem. 
This assumption implies that for every 
$0 < t \le \bar{t}$ there exists a constant $C_t>0$ 
such that, for all $0<h\le h_0$ and all $\bsy\in U$,
\begin{equation} \label{eq:reg2}
 \| w(\bsy) - w^h(\bsy) \|_{\cY} \,\le\, C_{t} h^{t} \| w(\bsy)\|_{\cY_{t}}.
\end{equation}
\begin{prop}[{cp.~\cite[Theorems 2.4 and 2.5]{DKLN13_1134}}]\label{prop:FEconvrate}
Under Assumption~\ref{ass:AssBj} and Condition \eqref{eq:Regul}, for every
$f\in \cY'$ and for every $\bsy\in U$, the approximations $u^h(\bsy)$
are stable, i.e., \eqref{eq:FEstab} holds.
For every $f\in \cY'_t$ with $0<t\le \bar{t}$, 
there exists a constant $C>0$ such that as 
$h\rightarrow 0$ there holds
\begin{equation} \label{eq:FEconvu}
  \sup_{\bsy} \| u(\bsy) - u^h(\bsy) \|_\cX \,\le\, C\, h^t\, \| f \|_{\cY'_t }.
\end{equation}
Moreover, for a functional $G \in \cX'_{t'}$ with $0<t'\leq \bar{t}$,
for every $f\in \cY'_{t}$ with $0<t\leq \bar{t}$, for every $\bsy\in U$, 
as $h\to 0$, there exists a constant $C>0$ independent of 
$h>0$ and of $\bsy\in U$
such that the Galerkin approximations $G(u^h(\bsy))$ satisfy
the asymptotic error bound
\begin{align} \label{eq:Gconvest}
  \left| G(u(\bsy)) - G(u^h(\bsy)) \right|
  &\,\le\, C\, h^{t+t'} \, \| f \|_{\cY'_{t}} \, \| G \|_{\cX'_{t'}}.
\end{align}
\end{prop}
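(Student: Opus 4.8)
The plan is to establish the three assertions in sequence, with the main work being the duality (Aubin--Nitsche) argument for the functional error bound \eqref{eq:Gconvest}. First I would dispatch stability \eqref{eq:FEstab}: under Assumption~\ref{ass:AssBj} and Condition \eqref{eq:Regul}, Remark~\ref{remk:UniDisInfSup} supplies the uniform discrete inf-sup constant $\bar\mu = (1-\kappa)\bar\mu_0 > 0$, and then \eqref{eq:FEstab} is exactly the discrete a-priori estimate: testing the Galerkin equation \eqref{eq:parmOpEqh} against the inf-sup-maximizing $w^h$ and using $\langle f, w^h\rangle \le \|f\|_{\cY'}\|w^h\|_{\cY}$ gives $\|u^h(\bsy)\|_{\cX} \le \bar\mu^{-1}\|f\|_{\cY'}$ uniformly in $\bsy$. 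This is Proposition~\ref{prop:Galerkin} applied pointwise in $\bsy$.

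Next I would prove the energy-norm rate \eqref{eq:FEconvu}. The key is the uniform-in-$\bsy$ regularity shift: by Remark~\ref{rmk:ParReg}, Condition \eqref{eq:Regul} yields $\sup_{\bsy\in U}\|u(\bsy)\|_{\cX_t} \le C_t\|f\|_{\cY'_t}$ for $f\in\cY'_t$, $0<t\le\bar t$. Combining this with the quasioptimal Galerkin estimate \eqref{eq:reg1} from Proposition~\ref{prop:Galerkin}, namely $\|u(\bsy)-u^h(\bsy)\|_{\cX} \le C_t h^t\|u(\bsy)\|_{\cX_t}$, and taking the supremum over $\bsy\in U$ gives \eqref{eq:FEconvu} with a constant independent of $h$ and $\bsy$. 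The only point requiring care is that all constants track through uniformly in the parameter, which is guaranteed because the inf-sup constant $\bar\mu$ and the regularity constant $C_t$ are both $\bsy$-independent.

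The substantive step is the superconvergence bound \eqref{eq:Gconvest}, where I would use an Aubin--Nitsche duality argument. Fix $\bsy\in U$ and introduce the solution $w(\bsy)$ of the adjoint problem $A^*(\bsy)w(\bsy) = G$ (legitimate since $G\in\cX'_{t'}$ and, by \eqref{eq:Regul}, $(A^*(\bsy))^{-1}\in\cL(\cX'_{t'},\cY_{t'})$), together with its Petrov-Galerkin approximation $w^h(\bsy)\in\cY^h$. Writing the error functional via Galerkin orthogonality, one obtains the representation
\[
G(u(\bsy)) - G(u^h(\bsy))
= \fa\bigl(\bsy; u(\bsy)-u^h(\bsy), w(\bsy)-w^h(\bsy)\bigr),
\]
valid because the residual $u-u^h$ is $\fa(\bsy;\cdot,\cdot)$-orthogonal to $\cY^h$ while $w^h\in\cY^h$. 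Bounding the right-hand side by the continuity of $\fa(\bsy;\cdot,\cdot)$ and then applying the primal rate \eqref{eq:reg1} and the adjoint rate \eqref{eq:reg2},
\[
\bigl|G(u(\bsy))-G(u^h(\bsy))\bigr|
\le C\,\|u(\bsy)-u^h(\bsy)\|_{\cX}\,\|w(\bsy)-w^h(\bsy)\|_{\cY}
\le C\, h^{t}\,h^{t'}\,\|u(\bsy)\|_{\cX_t}\,\|w(\bsy)\|_{\cY_{t'}},
\]
yields the product rate $h^{t+t'}$. Finally I would substitute the regularity estimates $\|u(\bsy)\|_{\cX_t}\le C_t\|f\|_{\cY'_t}$ and $\|w(\bsy)\|_{\cY_{t'}}\le C'_{t'}\|G\|_{\cX'_{t'}}$ from \eqref{ass:a priory} to produce \eqref{eq:Gconvest}. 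The main obstacle is verifying the adjoint approximation estimate \eqref{eq:reg2} uniformly in $\bsy$, which requires the separately assumed uniform inf-sup stability of $\cX^h\times\cY^h$ for the adjoint form; this is the reason that stability of both the primal and dual discrete problems is needed, and it is the one place where the $\bsy$-independence of all constants must be checked with some care rather than inherited directly from the primal analysis.
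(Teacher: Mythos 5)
Your proof is correct and is essentially the argument the paper intends: the paper itself gives no proof of Proposition~\ref{prop:FEconvrate}, deferring to the cited Theorems 2.4 and 2.5 of \cite{DKLN13_1134}, whose proofs proceed exactly as you do --- discrete stability from the uniform inf-sup condition of Remark~\ref{remk:UniDisInfSup}, the energy rate \eqref{eq:FEconvu} from quasioptimality \eqref{eq:reg1} combined with the uniform regularity shift \eqref{ass:a priory}, and the superconvergence bound \eqref{eq:Gconvest} via the Aubin--Nitsche duality identity $G(u-u^h)=\fa(\bsy;u-u^h,w-w^h)$ using Galerkin orthogonality and the adjoint rate \eqref{eq:reg2}. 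You also correctly flag the one hypothesis that is easy to miss, namely that \eqref{eq:reg2} rests on the separately assumed uniform discrete inf-sup stability of $\cX^h\times\cY^h$ for the adjoint problem, which is what makes all constants $\bsy$-independent.
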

%
%%%%%%%%%%%%%%%%%%%%%%%%%%%%%%%%%%%%%%%%%%%%%%%%%%%%%%%%%%%%
\subsection{Dimension truncation}
\label{sec:dimtrunc}
It will be convenient below to truncate the infinite sum in \eqref{eq:Baffine} to $B$ 
terms and 
solve the corresponding operator equation \eqref{eq:main} 
approximately using Galerkin discretization from 
two dense, one-parameter families 
$\{\cX^h\}\subset \cX$, $\{\cY^h\}\subset \cY$ of subspaces
of $\cX$ and $\cY$.
For $J\in \N$ and $\bsy\in U$, we define
\begin{equation}\label{equ:defAs}
\fa_{J}(\bsy;v,w) := \Ylangle A^{(J)}(\bsy) v, w\Yrangle,
\quad\text{with}\quad
  A^{(J)}(\bsy) :=  A_0 + \sum_{j=1}^{J} y_j A_j\;.
\end{equation}
Then, for every $0<h \le h_0$ and every $\bsy\in U$, 
the dimension-truncated Galerkin solution $u^h_{J}(\bsy)$ is the solution of
\begin{equation} \label{eq:uhs}
\text{find } u^h_{J}(\bsy) \in \cX^h: \quad
\fa_J(\bsy;u^h_{J}(\bsy),w^h) = \Ylangle f, w^h \Yrangle \quad
\forall w^h\in \cY^h\;.
\end{equation}
By choosing $\bsy = (y_1,\ldots,y_{J},0,0,\ldots)$,
Proposition~\ref{prop:Galerkin} and Remark~\ref{remk:UniDisInfSup}
remain valid for the dimensionally truncated
problem \eqref{eq:uhs}, and hence \eqref{eq:FEstab} 
holds with $u^h_{J}(\bsy)$ in place of $u^h(\bsy)$, that is
\begin{equation}\label{eq:FEstab_s}
\sup_{\bsy}  \|u^h_{J}(\bsy)\|_{\cX} \le \frac{1}{\bar{\mu}} \|f\|_{\cY'}
\;.
\end{equation}

\begin{thm} \label{thm:trunc} (\cite[Theorem~5.1]{Kuo475})
Under Assumption~\ref{ass:AssBj}, 
for every $f\in \cY'$, $G\in \cX'$, 
$\bsy\in U$, $J\in\N$ and $h>0$,
the variational problem \eqref{eq:uhs} 
admits a unique solution $u_{J}^h(\bsy)$ 
which satisfies
\begin{equation}\label{eq:Idimtrunc}
  |G(u^h)- G(u^h_{J})|
  \,\le\, C\, %\frac{\tilde{C}}{\mu}\,
  \|f\|_{\cY'}\, \|G\|_{\cX'}\,
  \bigg(\sum_{j\ge J +1} \beta_{0,j}\bigg)^2
\end{equation}
for some constant $C>0$ %$\tilde{C}>0$
independent of $f$, $G$ and of $J$ 
where $\beta_{0,j}$ is defined in \eqref{eq:Bjsmall}. 
In addition, if \eqref{eq:ordered} and Assumption \ref{ass:AssBj} 
hold with some $0< p_0 < 1$, 
then
\begin{equation}\label{trunc:Bp}
  \sum_{j\ge J + 1} \beta_{0,j}
  \,\le\,
 \min\left(\frac{1}{1/p_0-1},1\right)
  \bigg(\sum_{j\ge1} \beta_{0,j}^{p_0} \bigg)^{1/p_0}
  J^{-(1/{p_0}-1)}\;.
\end{equation}
Analogous bounds, with constants which do not depend on the discretization
parameter $h$ also hold for the parametric PG solutions $u^h(\bsy)$
in \eqref{eq:parmOpEqh}.
\end{thm}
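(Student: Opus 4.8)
The plan is to run a parametric duality (Aubin--Nitsche) argument and then exploit the product structure of $\eta$, following \cite{Kuo475}. Well-posedness of \eqref{eq:uhs} I would obtain by inserting the truncated parameter $\bsy=(y_1,\dots,y_J,0,0,\dots)$ into Proposition~\ref{prop:Galerkin}: by Remark~\ref{remk:UniDisInfSup} the truncated form $\fa_J(\bsy;\cdot,\cdot)$ satisfies the uniform discrete inf--sup conditions \eqref{eq:Bhinfsup1}, \eqref{eq:Bhinfsup2} with the \emph{same} constant $\bar{\mu}$, so $u^h_J(\bsy)\in\cX^h$ exists, is unique, and obeys \eqref{eq:FEstab_s}; likewise for the full PG solution $u^h(\bsy)$.

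For the error representation I would introduce the discrete adjoint solution $z^h=z^h(\bsy)\in\cY^h$ solving $\fa(\bsy;v^h,z^h)=G(v^h)$ for all $v^h\in\cX^h$, which is well posed by \eqref{eq:Bhinfsup2} and satisfies $\|z^h\|_{\cY}\le\bar{\mu}^{-1}\|G\|_{\cX'}$ uniformly in $\bsy$. Subtracting the equations defining $u^h$ and $u^h_J$ shows that $e^h:=u^h-u^h_J\in\cX^h$ obeys $\fa(\bsy;e^h,w^h)=-\sum_{j>J}y_j\,\fa_j(u^h_J,w^h)$ for all $w^h\in\cY^h$; choosing $w^h=z^h$ and using $G(e^h)=\fa(\bsy;e^h,z^h)$ yields the pointwise identity
\[
G\big(u^h(\bsy)\big)-G\big(u^h_J(\bsy)\big)=-\sum_{j>J}y_j\,\fa_j\big(u^h_J(\bsy),z^h(\bsy)\big).
\]

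The square in \eqref{eq:Idimtrunc} I would extract by integrating this identity against $\eta$. For each $j>J$ the factor $u^h_J(\bsy)$ is independent of $y_j$, while $\eta$ is a product measure with $\int_{-1}^1 y_j\,d\sigma(y_j)=0$; hence, by Fubini, $\int_U y_j\,\fa_j(u^h_J,z^h)\,d\eta$ equals $\fa_j\big(u^h_J,\,\int y_j z^h\,d\sigma(y_j)\big)$ averaged over the remaining variables, and the partial mean $\int y_j z^h\,d\sigma(y_j)=\int y_j\big(z^h(\bsy)-z^h(\bsy|_{y_j=0})\big)\,d\sigma(y_j)$ since the $y_j$-independent part integrates to zero. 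The decisive estimate is that $z^h(\bsy)-z^h(\bsy|_{y_j=0})$ solves the discrete adjoint problem with data $-y_j A_j^{*}z^h(\bsy|_{y_j=0})$, so uniform inf--sup stability gives $\|z^h(\bsy)-z^h(\bsy|_{y_j=0})\|_{\cY}\le\bar{\mu}^{-1}|y_j|\,\|A_j\|_{\cL(\cX,\cY')}\,\|z^h(\bsy|_{y_j=0})\|_{\cY}$. Combining this with $\|A_j\|_{\cL(\cX,\cY')}\le\|A_0\|_{\cL(\cX,\cY')}\beta_{0,j}$ and the a priori bounds on $u^h_J$ and $z^h$ makes each summand $O(\beta_{0,j}^2\|f\|_{\cY'}\|G\|_{\cX'})$, and $\sum_{j>J}\beta_{0,j}^2\le(\sum_{j>J}\beta_{0,j})^2$ delivers \eqref{eq:Idimtrunc}. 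For \eqref{trunc:Bp} I would use the decreasing ordering \eqref{eq:ordered}: then $j\,\beta_{0,j}^{p_0}\le\sum_{k\le j}\beta_{0,k}^{p_0}\le\sum_{k\ge1}\beta_{0,k}^{p_0}$ gives $\beta_{0,j}\le(\sum_{k\ge1}\beta_{0,k}^{p_0})^{1/p_0}j^{-1/p_0}$, and comparing the tail sum with $\int_J^\infty x^{-1/p_0}\,dx$ yields the stated rate and the constant $\min((1/p_0-1)^{-1},1)$. The statement for $u^h(\bsy)$ in \eqref{eq:parmOpEqh} is identical, as only the $h$-independent constant $\bar{\mu}$ and the bounds \eqref{eq:FEstab}, \eqref{eq:FEstab_s} enter.

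The main obstacle is precisely the quadratic power: the duality identity is only \emph{linear} in the tail $\sum_{j>J}y_j\fa_j(\cdot,\cdot)$ pointwise in $\bsy$, so the second power rests essentially on integration against $\eta$ --- the vanishing coordinate means $\int_{-1}^1 y_j\,d\sigma=0$ together with the independence of $u^h_J$ of the truncated coordinates --- and the technical crux is the uniform-in-$\bsy$ first-order perturbation bound for the discrete adjoint solution $z^h$; accordingly, the square in \eqref{eq:Idimtrunc} should be read as a bound on the $\eta$-averaged functional rather than a genuinely pointwise one.
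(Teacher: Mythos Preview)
The paper does not give its own proof of this statement; it simply cites \cite[Theorem~5.1]{Kuo475}. Your duality-plus-symmetry argument is essentially the one carried out there, and your diagnosis of the quadratic power is correct: the bound \eqref{eq:Idimtrunc} is \emph{not} a pointwise estimate in $\bsy$ (pointwise one only gets the linear tail $\sum_{j>J}\beta_{0,j}$), but a bound on the $\eta$-averaged functional error $\big|\int_U G(u^h)\,d\eta-\int_U G(u^h_J)\,d\eta\big|$, where the vanishing first moments $\int_{-1}^1 y_j\,d\sigma(y_j)=0$ together with the independence of $u^h_J$ from $y_j$, $j>J$, produce the extra factor. The label ``\texttt{Idimtrunc}'' and the cited source confirm this reading; the phrase ``for every $\bsy\in U$'' in the present statement refers to well-posedness, not to the error bound.

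Two minor points. First, your perturbation estimate for $z^h$ in fact yields the sharper $\sum_{j>J}\beta_{0,j}^2$, and you then relax to $(\sum_{j>J}\beta_{0,j})^2$ only to match the stated form---that is fine. Second, your derivation of \eqref{trunc:Bp} via the integral comparison gives only the constant $(1/p_0-1)^{-1}$; the ``$\min$ with $1$'' requires a second, elementary estimate: write $\sum_{j>J}\beta_{0,j}\le(\sup_{j>J}\beta_{0,j})^{1-p_0}\sum_{j>J}\beta_{0,j}^{p_0}$ and bound $\sup_{j>J}\beta_{0,j}=\beta_{0,J+1}\le(\sum_k\beta_{0,k}^{p_0})^{1/p_0}(J+1)^{-1/p_0}$ from the ordering, which gives constant $1$. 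Taking the better of the two yields the stated $\min$.
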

\subsection{Tensor Product \Tsch Expansion}
\label{sec:Tscheb}
It has been shown in several contributions that the solution
$u(\bsy)$ can be represented via polynomial chaos expansions in terms of 
multivariate monomials \cite{codesc10-1,codesc11}, 
tensorized Legendre polynomials \cite{chalch14,codesc10-1,codesc11} 
and tensorized \Tscheb polynomials \cite{Hanse486}.
The coefficient sequence in these expansions
is contained in an $\ell_p$-space with $0 < p \leq 1$. 
This fact enables to show sparse approximation rates, in particular,
the solution can be well approximated via a finitely
truncated polynomial expansion with only a few terms. 
We will extend such estimates below to the weighted case.

To precise our notion of sparsity,
as in \cite{Hanse486}
we consider an expansion of the parametric solution
$u = u(\bsy) \in \bcX$ into formal series
with respect to tensorized \Tscheb polynomials, i.e.,
\be\label{Cheb:expand}
u(\bsy) = \sum_{\bnu \in \cF} d_{\bnu} T_{\bnu}(\bsy).
\ee
Note that the coefficients $d_{\bnu}$ are elements of $\bcX$, and in the specific case of the parametric diffusion equation they are functions in
$V = H_0^1(D)$, 
The convergence of the (formal, at this stage)
infinite sum in \eqref{Cheb:expand} is, in general,
unconditional (see, eg., \cite{codesc10-1,codesc11} for 
analogous assertions for tensorized Taylor- and 
Legendre polynomials, and \cite{Hanse486} for
\Tscheb polynomials).
Also, as developed in \cite{codesc10-1,codesc11}
initially for Legendre polynomials,
it has been shown in \cite{Hanse486} (for a more general version of the parametric diffusion equation) that 
the sequence $(\| d_{\bnu} \|_V)_{\bnu \in \cF}$ 
is contained in $\ell^p(\cF)$ for $0 < p \leq 1$ provided that
$(\|\psi_j\|_\infty)_{j \in \N}$ is contained in $\ell^p(\N)$. 
We will generalize this sparsity result for \Tscheb
coefficients to the weighted case 
(and in the general abstract context of affine parametric operator equations). 
As a consequence of a well-known estimate 
(going back to Stechkin, see also \cite[Theorems 2.3 and 2.5]{fora13})
it follows that $u$ can be well-approximated by
finitely truncated expansions \eqref{Cheb:expand},
that is, by
\[
u_{\Lambda}(\bsy) = \sum_{\bnu \in \Lambda} d_{\bnu} T_{\bnu}(\bsy),
\]
where $\Lambda \subset \cF$ has small cardinality, say $\#\Lambda =s$. 
We refer to \cite{codesc10-1,codesc11,Hanse486} for details.

This observation is the motivation for our method based 
on compressive sensing for numerically approximating the 
solution $u = u(\bsy)$, or at least functionals 
$G: \bcX \to \R$ of the parametric solution $F(\bsy) = G (u(\bsy))$ 
described and analyzed in this paper.
We refer to Remark~\ref{rem:other:pol} below for reasons why we have chosen
$\Tscheb$ polynomials in our context.

%%%%%%%%%%%%%%%%%%%%%%%%%%%%%%%%%%%%%%%%%%%%%%%%%%%%%%%%%%%%%%%%%%%%%%%%%%
\section{Compressive Sensing}
\label{sec:CSW}
%%%%%%%%%%%%%%%%%%%%%%%%%%%%%%%%%%%%%%%%%%%%%%%%%%%%%%%%%%%%%%%%%%%%%%%%%%%
Compressive sensing allows to solve underdetermined systems of linear equations 
under certain conditions.
Our goal is to apply this principle for recovering an approximation 
to the parametric solution $u(\bsy)$ of \eqref{diff:eq:weak} from (approximate) samples $u(\bsy_\ell)$, $\ell = 1,\hdots,m$.
For the sake of simplicity, however, we consider here the recovery of a functional 
$G(u(\bsy))$ of the parametric solution from samples $G(u(\bsy_\ell))$, and postpone the reconstruction of the full solution $u(\bsy)$
to a later contribution.
Since computing a sample corresponds to a numerical solution of a 
PDE, which is usually a costly procedure, we would like to take
only a small number $m$ of samples. Before going into details on this, 
we first review basic results from compressive sensing.

Given a matrix $\bsA \in \R^{m \times N}$ with $m \ll N$ the general 
aim of compressive sensing is to reconstruct a vector $\bsx \in \R^N$ 
from
\[
\bsb = \bsA \bsx.
\]
(Later $\bsx$ will be replaced by $(g_\bnu) = (G(d_{\bnu}))$ for the coefficients
$d_{\bnu}$ in \eqref{Cheb:expand}.)
Since this is an underdetermined linear system of equations, 
additional assumptions are required in order to be able to (approximately) reconstruct
$\bsx$. Here we work with the assumption that $\bsx$ is $s$-sparse, 
i.e., $\|\bsx\|_0 = \#\{ \ell: x_\ell \neq 0\} \leq s$ 
for sufficiently small $s < m$ or that at least the error of best $s$-term approximation
\[
\sigma_s(\bsx)_1 := \inf_{\bsz: \|\bsz\|_0 \leq s} \|\bsx - \bsz \|_1
\]
is small (decays quickly in $s$). 
The na{\"i}ve approach of recovering $\bsx$ from $\bsb = \bsA\bsx$ via $\ell_0$-minimization 
is unfortunately NP hard \cite{fora13},
but several tractable alternatives have been developed.
Most notably among these is $\ell_1$-minimization 
which consists in finding the minimizer of
\[
\min \|\bsz\|_1 \quad \mbox{ subject to } \bsA \bsz = \bsb\;.
\]
For a number of random matrix constructions of $\bsA$, 
it can be shown that recovery of $s$-sparse vectors via 
$\ell_1$-minimization (and other algorithms) 
is successful and stable with high probability provided that 
$m \asymp s \log^\alpha(N)$ 
(where $\alpha = 1$ or $\alpha =4$ for certain constructions).

A particular setup of interest to us arises from random sampling of sparse expansions:
consider functions on a set $U$ of the form
\be\label{eq:fx}
u(\bsy) = \sum_{j \in \Indx} x_j \phi_j(\bsy), \quad \bsy \in U,
\ee
where the $\phi_j:U \to \R$, $j \in \Indx$, form a finite orthonormal system
(later we extend to infinite orthonormal systems)
with respect to a probability measure $\eta$ on $U$, i.e.,
\begin{equation}\label{orthogonality}
\int_U \phi_j(t) \phi_k(t) d\eta(t) = \delta_{jk}, \quad j,k \in \Indx.
\end{equation}
(Below, we will choose $U= [-1,1]^\N$, $\eta$ to be the product probability measure \eqref{prod:Cheb:meas}, the set $\Indx$ to be $\cF$ defined in \eqref{def:cF}
and the $\phi_j$ to be the tensorized \Tscheb polynomials $T_{\bnu}$.)
The function $u$ is called $s$-sparse if the coefficient vector $\bsx$ 
in \eqref{eq:fx} is $s$-sparse. 
Given samples $u(\bsy_1),\hdots,u(\bsy_m)$ at locations $\bsy_\ell \in U$ we
would like to reconstruct $u$. Introducing the sampling matrix
\begin{equation}\label{def:sampling:matrix}
\A_{\ell,j} = \phi_j(\bsy_\ell), \quad \ell=1,\hdots,m, \; j \in \Indx,
\end{equation}
the vector $\bsb$ of samples $b_\ell = u(\bsy_\ell)$ can be written as 
\[
\bsb = \bsA \bsx,
\]
and for (approximately) sparse $\bsx$ and small $m \ll N$ 
we obtain a compressive sensing problem with this particular type of matrix.
If the samples $\bsy_1,\hdots,\bsy_m$ are chosen independently at random according 
to the probability measure $\nu$, reconstruction results
are available if the sequence of norms $(\|\phi_j\|_\infty)$ satisfies certain 
boundedness assumptions. 
We will describe this in the slightly more general 
context of weighted sparsity \cite{rawa13}. 

For a weight sequence $\omega = (\omega_{j})_{j \in \Indx}$ 
satisfying $\omega_j \geq 1$, 
we introduce the weighted $\ell_{\omega,p}$-space endowed with the norm
\begin{equation}\label{transfer:weightslp}
\triple \bsx \triple_{\omega,p} 
:= 
\left(  \sum_{j \in \Indx} |x_j|^p \omega_j^{2-p} \right)^{1/p}.
\end{equation}
The exponent $2-p$ at the weight $\omega$ may seem unfamiliar, but is most convenient in our context, see also \cite{rawa13}.
Of course, one can pass to standard definitions of weighted $\ell_p$-norms via a change of the weight.

Note that $\triple x \triple_{\omega,2} = \|x\|_2$ reduces to the 
unweighted $\ell_2$ norm, while $\triple \bsz\triple_{\omega,1} = \sum_{j \in \Indx} \omega_j |z_j|$.
Moreover, formally passing with $p \to 0$,
we obtain 
\[
\triple \bsx\triple_{\omega,0} := \sum_{j \in \supp \bsx} \omega_j^2.
\]
We shall say that $\bsx\in \R^{\Indx}$ is {\em weighted $s$-sparse} 
if $\triple \bsx \triple_{\omega,0} \leq s$. 
The weighted $s$-sparse approximation error in $\ell_{\omega,p}$ 
is defined as
\[
\sigma_s(\bsx)_{\omega,p} 
= 
\inf_{\bsz : \triple \bsz\triple_{\omega,p} \leq s} \triple\bsx-\bsz\triple_{\omega,p}.
\]
A weighted version of Stechkin's estimate was shown in \cite{rawa13}, which states that
\be\label{weighted:Stechkin}
\sigma_s(\bsx)_{\omega,q} 
\leq 
C_{p,q} s^{1/q-1/p} \triple \bsx\triple_{\omega,p}, 
\quad p < q \leq 2, \quad s \geq 2 \| \omega \|_\infty, \quad C_{p,q} = 2^{1/p-1/q}.
\ee
Choosing for instance $q = 1$, 
the $s$-term approximation error decreases quickly with increasing $s$ if
$p$ is close to $0$ and if $\triple \bsx \triple_{\omega,p}$ is small (or at least finite).
In order to reconstruct a weighted sparse vector $\bsx$ from $\bsb = \bsA \bsx$, 
we use the weighted $\ell_1$-minimization program
\begin{equation}\label{weighted:l1}
\min \triple \bsz\triple_{\omega,1} \quad \mbox{ subject to }
\bsA \bsz = \bsb.
\end{equation}
This convex optimization program can be solved efficiently with a number of 
algorithms \cite[Chapter 15]{fora13}, \cite{bova04,chpo11,pabo13}.
Weighted versions of iterative hard thresholding algorithms 
\cite{fora13,jo13,fekrra14} or CoSaMP \cite{netr08,fora13} 
may be used alternatively for the reconstruction.

Weighted $\ell_1$-minimization is guaranteed to reconstruct weighted $s$-sparse vectors under a 
variant of the (by-now classical) {\em restricted isometry property} (RIP) of the matrix $\bsA$.
The weighted restricted isometry constant $\delta_{\omega,s} = \delta_{\omega,s}(\bsA)$ 
is defined to be the smallest number such that
\[
(1-\delta_{\omega,s}) \|\bsx\|_2^2 
\leq \|\bsA \bsx\|_2^2 \leq (1+\delta_{\omega,s}) \|\bsx\|_2^2  
\quad \mbox{ for all } \bsx \mbox{ with } \|\bsx\|_{\omega,0} \leq s.
\]
Informally, we say that 
{\em 
$\bsA$ satisfies the $\omega$-RIP}
if $\delta_{\omega,s}$ is small for sufficiently large $s$.
The following result has been shown in \cite[Theorem 4.5 and Corollary 4.3]{rawa13} 
generalizing the unweighted case \cite{carota06-1,ca08,cazh14,fora13}.
\begin{thm}  \label{thm:CS}
Let $\bsA \in \R^{m \times N}$ with $\delta_{\omega,s} < 1/3$ 
for $s \geq 2 \|\omega\|_\infty$. Then for $\bsx \in \R^N$ and $\bsb = \bsA \bsx + \mathbf{\xi}$
with $\|\mathbf{\xi}\|_2 \leq \tau$, the minimizer $\bsx^\sharp$ of
\[
\min \triple \bsz\triple_{\omega,1} %= \sum_{j \in \Indx} \omega_j |z_j| \quad 
   \quad     \mbox{ subject to } \| \bsA \bsz - \bsb\|_2 \leq \tau
\]
satisfies
\begin{align}
\triple \bsx - \bsx^\sharp\triple_{\omega,1} 
& \leq c_1 \sigma_s(\bsx)_{\omega,1} + d_1 \sqrt{s} \tau, 
\\
\| \bsx - \bsx^\sharp\|_{2} 
& \leq c_2 \frac{\sigma_s(\bsx)_{\omega,1}}{\sqrt{s}} + d_2 \tau.
\end{align}
\end{thm}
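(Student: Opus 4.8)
The plan is to adapt the by-now classical RIP-based recovery argument (in the vein of Cand\`es) to the weighted setting, using the weighted Stechkin estimate \eqref{weighted:Stechkin} to close the chain of inequalities. Write $\bsv = \bsx^\sharp - \bsx$ for the reconstruction error. Two elementary observations drive everything. First, since $\bsx^\sharp$ is feasible and $\|\bsA\bsx - \bsb\|_2 = \|\mathbf{\xi}\|_2 \le \tau$, the triangle inequality gives the tube bound $\|\bsA\bsv\|_2 \le 2\tau$. Second, fix a support set $S\subset \Indx$ realising the best weighted $s$-term approximation, so that $\triple\bsx_S\triple_{\omega,0}\le s$ and $\triple\bsx_{\ov S}\triple_{\omega,1} = \sigma_s(\bsx)_{\omega,1}$. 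Minimality of the weighted $\ell_1$-objective in \eqref{weighted:l1}, namely $\triple\bsx^\sharp\triple_{\omega,1}\le\triple\bsx\triple_{\omega,1}$, together with the triangle inequality applied on $S$ and on $\ov S$ separately, yields the weighted cone condition
\[
\triple\bsv_{\ov S}\triple_{\omega,1} \le \triple\bsv_S\triple_{\omega,1} + 2\sigma_s(\bsx)_{\omega,1}.
\]

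The technical heart, and the step I expect to be the main obstacle, is a weighted block decomposition of the tail $\ov S$. In the unweighted theory one sorts the entries of $\bsv_{\ov S}$ by decreasing modulus and cuts them into consecutive blocks of cardinality $s$; here the blocks $T_1, T_2,\dots$ must instead be grown greedily so that each has weighted cardinality $\triple\bsv_{T_k}\triple_{\omega,0}=\sum_{j\in T_k}\omega_j^2$ confined to a bounded band around $s$ (and chosen so that $S$ together with any single block stays within the $\omega$-RIP regime). The hypothesis $s\ge 2\|\omega\|_\infty$ is precisely what ensures that no single coordinate exceeds the per-block budget, so that such a partition exists and behaves well. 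The target of this construction is the block-form weighted Stechkin bound
\[
\sum_{k\ge1}\|\bsv_{T_k}\|_2 \le C\, s^{-1/2}\,\triple\bsv_{\ov S}\triple_{\omega,1},
\]
which is the weighted analogue of the standard telescoping estimate and is what makes the tail controllable with a dimension-free constant.

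With the decomposition in hand I would close the argument as in the unweighted case. Expanding $\langle\bsA\bsv_S,\bsA\bsv\rangle = \|\bsA\bsv_S\|_2^2 + \sum_{k\ge1}\langle\bsA\bsv_S,\bsA\bsv_{T_k}\rangle$ and invoking the $\omega$-RIP — the lower bound $\|\bsA\bsv_S\|_2^2\ge(1-\delta_{\omega,s})\|\bsv_S\|_2^2$, the Cauchy--Schwarz/tube bound $|\langle\bsA\bsv_S,\bsA\bsv\rangle|\le\sqrt{1+\delta_{\omega,s}}\,\|\bsv_S\|_2\cdot 2\tau$, and the near-orthogonality estimate $|\langle\bsA\bsv_S,\bsA\bsv_{T_k}\rangle|\le\delta_{\omega,s}\|\bsv_S\|_2\|\bsv_{T_k}\|_2$ for disjoint supports — produces
\[
(1-\delta_{\omega,s})\|\bsv_S\|_2 \le 2\sqrt{1+\delta_{\omega,s}}\,\tau + \delta_{\omega,s}\sum_{k\ge1}\|\bsv_{T_k}\|_2.
\]
Substituting the block bound, then the cone condition, and finally the weighted Cauchy--Schwarz inequality $\triple\bsv_S\triple_{\omega,1}\le\sqrt{\triple\bsv_S\triple_{\omega,0}}\,\|\bsv_S\|_2\le\sqrt{s}\,\|\bsv_S\|_2$, turns this into a self-referential inequality for $\|\bsv_S\|_2$. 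The threshold $\delta_{\omega,s}<1/3$ is exactly what forces the resulting coefficient in front of $\|\bsv_S\|_2$ to be strictly below one, so it can be absorbed on the left, giving $\|\bsv_S\|_2\le C\big(s^{-1/2}\sigma_s(\bsx)_{\omega,1}+\tau\big)$. The $\ell_2$ estimate then follows from $\|\bsv\|_2\le\|\bsv_S\|_2+\sum_{k\ge1}\|\bsv_{T_k}\|_2$, and the $\triple\cdot\triple_{\omega,1}$ estimate follows by feeding $\|\bsv_S\|_2$ back through the cone condition together with $\triple\bsv_S\triple_{\omega,1}\le\sqrt s\,\|\bsv_S\|_2$. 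The bookkeeping of the constants $c_1,d_1,c_2,d_2$ is then routine; the genuine difficulty lies in constructing the weighted partition and verifying the displayed block-Stechkin bound uniformly in the ambient dimension $N$.
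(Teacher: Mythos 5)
First, a point of reference: the paper does not prove this theorem at all — it is imported verbatim from \cite[Theorem 4.5 and Corollary 4.3]{rawa13}, where the argument is factored as: weighted RIP $\Rightarrow$ weighted robust null space property of order $s$ $\Rightarrow$ the two stated error bounds. The second implication is routine; the entire content of the cited proof is the first one, and that is precisely the ``weighted block decomposition'' which you yourself flag as the main obstacle and then leave unverified. So your proposal correctly reconstructs the architecture of the argument, but its only nontrivial ingredient is missing — and, as displayed, it is also false, in two distinct ways.

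(1) The claimed bound $\sum_{k\ge 1}\|\bsv_{T_k}\|_2 \le C s^{-1/2}\triple \bsv_{\ov S}\triple_{\omega,1}$ cannot hold with a constant independent of $s$: if $\bsv_{\ov S}$ is a single spike at a coordinate $j$ with $\omega_j=1$, the left side is $|v_j|$ while the right side is $Cs^{-1/2}|v_j|$. Exactly as in the unweighted theory, the sum must start at $k=2$, and then your inner-product step no longer closes as written, since $\|\bsv_{T_1}\|_2$ is uncontrolled: the RIP lower bound and the near-orthogonality estimates must be anchored at $S\cup T_1$ (against $T_k$, $k\ge 2$), not at $S$. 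Those index sets have combined weighted cardinality up to $3s$, which is why \cite{rawa13} assumes $\delta_{\omega,3s}<1/3$; the $\delta_{\omega,s}$ in the paper's own restatement is a misquote, as is $s\ge 2\|\omega\|_\infty$, which should read $s\ge 2\|\omega\|_\infty^2$ so that each single coordinate ($\omega_j^2\le s/2$) fits inside half a block budget. (2) Even for $k\ge 2$, the telescoping estimate fails if the greedy blocks are filled in order of decreasing modulus $|v_j|$, which is what ``as in the unweighted case'' suggests: take a full block $T_k$ consisting of two entries of modulus $1$ with weights $\sqrt{s/2}$, followed by a block $T_{k+1}$ of $\sim s$ entries of modulus slightly below $1$ with weights $1$; then $\|\bsv_{T_{k+1}}\|_2\asymp \sqrt{s}$ while $s^{-1/2}\triple\bsv_{T_k}\triple_{\omega,1}\asymp 1$. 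The key idea of the weighted theory — the actual content of \cite[Theorem 4.5]{rawa13}, absent from your sketch — is to sort by the \emph{ratios} $|v_j|/\omega_j$. Then for $j\in T_{k+1}$,
\begin{equation*}
\frac{|v_j|}{\omega_j}\;\le\;\min_{i\in T_k}\frac{|v_i|}{\omega_i}\;\le\;\frac{\triple \bsv_{T_k}\triple_{\omega,1}}{\triple \bsv_{T_k}\triple_{\omega,0}},
\qquad\text{hence}\qquad
\|\bsv_{T_{k+1}}\|_2^2=\sum_{j\in T_{k+1}}\omega_j^2\Bigl(\frac{|v_j|}{\omega_j}\Bigr)^2\le \frac{s}{(s/2)^2}\,\triple\bsv_{T_k}\triple_{\omega,1}^2,
\end{equation*}
using that greedy filling (possible when $\omega_j^2\le s/2$) makes every non-terminal block have weighted cardinality in $(s/2,s]$. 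This gives $\|\bsv_{T_{k+1}}\|_2\le 2s^{-1/2}\triple\bsv_{T_k}\triple_{\omega,1}$ and, after summation, the correct $k\ge2$ block bound. With these three repairs — ratio sorting, summation from $k=2$, anchoring at $S\cup T_1$ under an order-$3s$ RIP hypothesis — your outline becomes the proof in \cite{rawa13}; without them it is a strategy statement, not a proof.
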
 
No effort has been made in optimizing the constant $1/3$. 
If $\tau = 0$ and $\bsx$ satisfies $\|\bsx\|_{\omega,0} \leq s$,
then the reconstruction is exact.

We are particularly interested in the situation when the measurements are (random) 
samples of a function having an (approximately) sparse expansion
in an orthonormal system 
$\{\phi_j\}_{j \in \Indx}$ satisfying \eqref{orthogonality}
with respect to a probability measure $\nu$ on $U$.
We further assume that the samples $\bsy_\ell$, $\ell=1,\hdots,m$, 
are chosen independently at random according to 
the measure $\eta$. 
Then the sampling matrix $\bsA$ defined in 
\eqref{def:sampling:matrix} is a structured random matrix. 
In \cite{rawa13}, the following bound for the weighted 
restricted isometry property has been proven, 
generalizing the unweighted case in \cite{cata06,ruve08,ra08,ra10}.
\begin{thm}\label{thm:weightRIP} 
Let $\omega = (\omega_j)_{j \in \Indx}$ 
be a weight sequence and $\{\phi_j\}_{j \in \Indx}$ be an orthonormal system on $(U,\eta)$ 
such that
\begin{equation}\label{weight:phi}
\| \phi_j \|_\infty = \sup_{\bsy \in U} | \phi_j(\bsy)| 
\leq \omega_j \quad \mbox{ for all } j \in \Indx.
\end{equation}
Let $\bsA \in \R^{m \times N}$, $N = \# \Indx$, 
be a random draw of the sampling matrix in \eqref{def:sampling:matrix} 
generated from independent samples $\bsy_1,\hdots,\bsy_m$ distributed
according to $\eta$. 
Given $s \geq 2 \|\omega\|_\infty^2$ and $\varepsilon,\delta \in (0,1)$, 
if
\be\label{eq:mBd}
m \geq C \delta^{-2} s \max\{ \log^3(s) \log(N), \log(\varepsilon^{-1})\},
\ee
then the weighted restricted isometry constant of 
$\frac{1}{\sqrt{m}} \bsA$
satisfies $\delta_{\omega,s} \leq \delta$ with 
probability at least $1-\varepsilon$. 
The constant $C>0$ in \eqref{eq:mBd} is universal.
\end{thm}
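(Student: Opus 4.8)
The plan is to recognize the weighted restricted isometry constant as the supremum of a centered empirical process and to bound it by first controlling its expectation and then its concentration. Writing $f_{\bsx}(\bsy) = \sum_{j \in \Indx} x_j \phi_j(\bsy)$ and letting $D_{\omega,s} = \{ \bsx \in \R^N : \|\bsx\|_{\omega,0} \le s, \ \|\bsx\|_2 = 1\}$ be the set of weighted $s$-sparse unit vectors, the orthonormality \eqref{orthogonality} gives $\E_{\bsy \sim \eta} |f_{\bsx}(\bsy)|^2 = \|\bsx\|_2^2 = 1$, while by the definition \eqref{def:sampling:matrix} of the sampling matrix we have $\frac{1}{m}\|\bsA \bsx\|_2^2 = \frac{1}{m}\sum_{\ell=1}^m |f_{\bsx}(\bsy_\ell)|^2$. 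Hence
\[
\delta_{\omega,s}
=
\sup_{\bsx \in D_{\omega,s}}
\left|
\frac{1}{m}\sum_{\ell=1}^m |f_{\bsx}(\bsy_\ell)|^2
-
\E_{\bsy}|f_{\bsx}(\bsy)|^2
\right|,
\]
so everything reduces to a uniform deviation bound over $D_{\omega,s}$.

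For the expectation I would apply standard symmetrization to introduce Rademacher signs $\epsilon_\ell$, obtaining
\[
\E\, \delta_{\omega,s}
\le
2\, \E \sup_{\bsx \in D_{\omega,s}}
\left|
\frac{1}{m}\sum_{\ell=1}^m \epsilon_\ell\, |f_{\bsx}(\bsy_\ell)|^2
\right|.
\]
Conditionally on the $\bsy_\ell$ this is a Rademacher process indexed by $D_{\omega,s}$, which I would estimate by generic chaining (Dudley's entropy integral / Talagrand's $\gamma_2$-functional). After polarization, the relevant metric is $d(\bsx,\bsx') = (m^{-1}\sum_\ell (|f_{\bsx}(\bsy_\ell)|^2 - |f_{\bsx'}(\bsy_\ell)|^2)^2)^{1/2}$, which is comparable to the product of $\max_\ell |f_{\bsx - \bsx'}(\bsy_\ell)|$ with the (bounded) radius of the process, so the task becomes estimating covering numbers of $D_{\omega,s}$ in a sup-type metric. (An alternative is the Krahmer--Mendelson--Rauhut chaos-process bound, but the rows $(\phi_j(\bsy_\ell))_j$ here are only bounded, not subgaussian, so the entropy estimate must be done by hand.)

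The crux, and the step I expect to be the main obstacle, is the covering-number / $\gamma_2$ estimate for $D_{\omega,s}$, which is where the weighted structure and the factor $\log^3(s)\log(N)$ in \eqref{eq:mBd} are produced. Here the hypotheses pay off precisely: the weighted-sparsity constraint $\|\bsx\|_{\omega,0} = \sum_{j \in \supp \bsx}\omega_j^2 \le s$ together with $\|\phi_j\|_\infty \le \omega_j$ from \eqref{weight:phi} controls $\|f_{\bsx}\|_\infty$ and plays exactly the role that the support cardinality plays in the unweighted theory, while the assumption $s \ge 2\|\omega\|_\infty^2$ ensures that every individual index is admissible in a weighted-sparse support. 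I would split the chaining at two regimes: at coarse scales a volumetric/dimension bound contributing the $\log N$ factor, and at fine scales an empirical-method (Maurey) bound exploiting the weighted $\ell_1$-geometry, contributing powers of $\log s$. Carrying the weights $\omega_j$ consistently through Maurey's empirical approximation and through the chaining sum is the delicate part.

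Finally I would upgrade the in-expectation bound to the high-probability statement. Because $|f_{\bsx}(\bsy_\ell)|^2$ is uniformly bounded on $D_{\omega,s}$ (again by the weighted-sparsity/boundedness combination), the supremum in the display above is a supremum of a bounded empirical process, so Talagrand's (Bousquet's) concentration inequality applies and yields a deviation from the mean controlled by terms of order $\sqrt{\log(\varepsilon^{-1})/m}$ and $\log(\varepsilon^{-1})/m$. Combining the expectation bound with the concentration bound and choosing $m$ as in \eqref{eq:mBd} makes $\delta_{\omega,s} \le \delta$ with probability at least $1-\varepsilon$, which is the claim.
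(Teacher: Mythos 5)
A preliminary but important point: the paper does not prove this statement at all. Theorem~\ref{thm:weightRIP} is imported from \cite{rawa13} (Rauhut--Ward), introduced by the sentence ``the following bound for the weighted restricted isometry property has been proven'' there; so there is no in-paper argument to compare yours against, and the only meaningful comparison is with the proof in that reference. Measured against that proof, your outline reconstructs its strategy correctly and in the right order: the weighted RIP constant of $\frac{1}{\sqrt{m}}\bsA$ is exactly the supremum of the centered empirical process you write down (orthonormality gives $\E|f_{\bsx}(\bsy)|^2=\|\bsx\|_2^2$); symmetrization plus a Dudley/chaining bound controls the expectation, with coarse scales handled volumetrically (producing $\log N$) and fine scales by Maurey's empirical method carried through the weighted $\ell_1$ geometry (producing the powers of $\log s$); Talagrand/Bousquet concentration then gives the probability $1-\varepsilon$. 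You also locate correctly where the hypotheses act: for $\bsx$ with $\|\bsx\|_{\omega,0}\le s$ and $\|\bsx\|_2=1$, Cauchy--Schwarz over $\supp\bsx$ gives $\sum_j\omega_j|x_j|\le\sqrt{s}$, hence by \eqref{weight:phi} $\|f_{\bsx}\|_\infty\le\sqrt{s}$, which is the boundedness that both the Maurey step (where $\|\phi_j/\omega_j\|_\infty\le 1$ is what is sampled) and the concentration step require; and $s\ge 2\|\omega\|_\infty^2$ keeps every single index admissible as a weighted-sparse support.

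As a proof, however, what you have written is a roadmap rather than an argument, and the missing pieces are precisely the content of \cite{rawa13}. Concretely: (i) the covering-number estimate for $D_{\omega,s}$ in the sup-type metric and the chaining computation that turns it into the factor $s\log^3(s)\log(N)$ are named as ``the main obstacle'' but never carried out; (ii) the expectation bound obtained this way is self-bounding (the radius of the process reappears on the right-hand side), so an absorption/bootstrap step is needed to close it, which your sketch omits; (iii) in the concentration step your stated deviation orders $\sqrt{\log(\varepsilon^{-1})/m}$ and $\log(\varepsilon^{-1})/m$ drop the envelope of the process: since $\sup_{\bsx\in D_{\omega,s}}\|f_{\bsx}\|_\infty^2\le s$, Bousquet's inequality yields deviations of order $\sqrt{s\log(\varepsilon^{-1})/m}+s\log(\varepsilon^{-1})/m$, and it is exactly this factor $s$ that forces the term $s\log(\varepsilon^{-1})$ inside the maximum in \eqref{eq:mBd}. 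None of these is a wrong turn --- with them filled in, your plan is the proof of the cited theorem --- but in its present form the attempt identifies the architecture without supplying the load-bearing estimates.
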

The important difference to the unweighted case is the weaker condition \eqref{weight:phi}, 
which allows the $L_\infty$-norms of the $\phi_j$ to grow with $j$. 
By \eqref{weight:phi} such growth requires to adapt the weights accordingly. 
As we will see below, 
the above generalization of the previous results in \cite{cata06,ruve08,ra08,ra10} 
is crucial for our application 
in the setting of tensorized \Tscheb polynomials.
A combination of the weighted RIP bound of Theorem~\eqref{thm:weightRIP} 
gives an approximation result 
in the finite-dimensional setting (finite index set $\Indx$). 
We refer to \cite[Theorem~1.1]{rawa13}.

For our purposes, we need to extend to infinite-dimensional index sets $\Indx$. 
As already mentioned in the introduction, 
the idea is to use a weighted $\ell_p$-assumption in order
to first determine a suitable finite dimensional subset 
$\Indx_0 \subset \Indx$ (where $\Indx_0$ may still be large) 
and then work on $\Indx_0$ in order to apply sparse reconstruction 
via compressive sensing. 
Here, the samples of $u$ can be interpreted
as perturbed samples of the finite-dimensional approximation of $u$ 
with elements of the orthonormal system indexed by $\Indx_0$.
Two approximation results of slightly different nature have been proven in \cite[Theorems~6.1 and 6.4]{rawa13}. 
Both work with the index set
\begin{equation}\label{inf:index:set}
\Indx_0^s = \{ j : \omega_j^2 \leq s/2\}.
\end{equation}
We will use arguments of their proofs for showing our main result, 
Theorem~\ref{thm:main:func} below, and refer to \cite{rawa13}
for details about the original results.
%%%%%%%%%%%%%%%%%%%%%%%%%%%%%%%%%%%%%%%%%%%%%%%%%%%%%%%%%%%%%%%%%%%
\section{Weighted $\ell_p$-estimates for \Tscheb expansions}
\label{sec:WeighEllp}
%%%%%%%%%%%%%%%%%%%%%%%%%%%%%%%%%%%%%%%%%%%%%%%%%%%%%%%%%%%%%%%%%%%
We now return to the parametric operator equation \eqref{eq:main}, i.e.,
\begin{equation}\label{eq:main2}
  A(\bsy)\, u (\bsy) = f \quad \mbox{ with }\quad  
  A(\bsy) = A_0 + \sum_{j\ge 1} y_j A_j, \quad \bsy \in U\;.
\end{equation}
We will use the expansion \eqref{Cheb:expand} of the solution $u(\bsy)$ 
in terms of tensorized \Tscheb polynomials.
The previous section, and in particular \eqref{weighted:Stechkin}, 
motivates to study whether the expansion
coefficients $d_{\bnu} \in \cX$ in 
\eqref{Cheb:expand} satisfy 
$(\| d_{\bnu} \|_{\bcX})_{\nu \in \cF} \in \ell_{\omega,p}(\cF)$ 
for a suitable choice of the weight sequence 
$(\omega_{\bnu})_{\bnu \in \cF}$ under certain
assumptions on the sequence $(\|\psi_j\|_\infty)_{j \in \N}$.
\subsection{Analytic continuation of parametric solutions to the complex domain}
\label{sec:AnCont}
In order to develop weighted $\ell_p$-bounds we rely on 
analytic continuation of the parametric solution map $\bsy \mapsto u(\bsy)$ 
as in \cite{codesc10-1,codesc11,Hanse486}.
To this end we consider complex parameter sequences
$\bsz = (z_j)_{j\geq 1}$ where 
$z_j = y_j + \bsi w_j$, $\bsi := \sqrt{-1}\in \C$.
For a radius $r>0$, we denote by 
$\cD_r = \{ z\in \C : |z| \leq r \}$ 
the (closed) disc of radius $r$ in $\C$ centered at the origin. 
We denote, for a sequence $\bsrho = (\rho_j)_{j\geq 1}$,
by 
$\cD_\bsrho := \prod_{j\in \N} \cD_{\rho_j}\subset \C^\N$ 
the corresponding polydisc. 
All function spaces $\bcX$, $\bcY$, etc.\ are now understood
as their ``complexifications'', i.e., as spaces over the coefficient
field $\C$. 

The complex extension of the parametric problem
\eqref{eq:parmOpEq} reads: given $\bsz \in \C^\N$,
\begin{equation} \label{eq:parmOpEqz}
 \mbox{find} \quad u(\bsz) \in \bcX \mbox{ such that } \quad
 \fa(\bsz;u(\bsz), w) \,=\,  {_{\bcY'}}\langle  f,w \rangle_{\bcY}
 \quad \mbox{ for all } w \in \bcY.
\end{equation}
Using Assumption \ref{ass:AssBj}, we may write, for every $\bsz\in \C^\N$,
$A(\bsz) = A_0(I + \sum_{j\geq 1}z_j A_0^{-1}A_j)$.
A Neumann series argument then shows that $A(\bsz)$ is boundedly
invertible for every $\bsz \in \cD_\bsrho$ under condition 
\eqref{eq:Bjsmall}, provided that %there holds
\begin{equation}\label{delta:admissible}
\sum_{j\geq 1} \rho_j \beta_{0,j} \leq 1 - \delta\;, 
\quad \mbox{ for some } \quad 0 < \delta < 1-\kappa.
\end{equation}
A sequence $\bsrho$ will be called $\delta$-{\em admissible} 
if %the above 
this condition holds.
Then, for $\bsz \in \cD_{\bsrho}$, the complex-parametric 
problem \eqref{eq:parmOpEqz} admits a unique solution $u(\bsz)$ which satisfies with
$\delta \mu_0 \leq \mu = (1-\kappa)\mu_0$  (cp.\ \eqref{eq:apriori})
the uniform a-priori estimate
\begin{equation}\label{eq:aprioriz}
\sup_{\bsz\in \cD_\bsrho} \| u(\bsz) \|_{\bcX} \,\leq\, \frac{\| f \|_{\bcY'}}{\delta\mu_0}. 
\end{equation}
To see this, we
fix in $\bsz$ all components except $z_k$, say.
Then, the affine parameter dependence of $A(\bsz)$ 
implies that for $\bsz \in \cD_\bsrho$ the parametric solution
$u(\bsz) = (A(\bsz))^{-1}f$ is holomorphic with respect to $z_k\in \cD_{\rho_k}$,
being the image of $f$ under a resolvent operator since, with $B_j := A_0^{-1}A_j$,
we have
\[
u(\bsz) = \left( (I + \sum_{j\ne k} z_j B_j) + z_k B_k \right)^{-1} A_0^{-1} f \;.
\]
Together with \eqref{eq:B0infsup}, this identity also implies  the bound \eqref{eq:aprioriz},
since for every $\bsz\in \cD_\bsrho$
\[
\| u(\bsz) \|_\bcX 
=
\left\| \left( I + \sum_{j\geq 1} z_j B_j\right)^{-1} A_0^{-1} f \right\|_{\bcX}
\leq 
\left\|  \left( I + \sum_{j\geq 1} z_j B_j\right)^{-1} \right\|_{\cL(\bcX,\bcX)}
\| A_0^{-1} f \|_{\bcX}
\leq \frac{\| f \|_{\bcY'}}{[1-(1-\delta)] \mu_0}.
\]
Assumption~\ref{ass:AssBj} implies that
the constant sequence $\rho_j = 1$ is $\delta$-admissible
for $0 < \delta \leq 1-\kappa$ and that for $0<\delta < 1-\kappa$, 
there exist $\delta$-admissible sequences with $\rho_j > 1$ for 
every $j \geq 1$ so that $U \subset \cD_\bsrho$ with strict inclusion, in each variable.
%
%%%%%%%%%%%%%%%%%%%%%%%%%%%%%%%%%%%%%%%%%%%%%%%%%%%%%%%%%%%%%%%%%
\subsection{Bounds for \Tscheb coefficients}
\label{sec:BndTscheb}
%%%%%%%%%%%%%%%%%%%%%%%%%%%%%%%%%%%%%%%%%%%%%%%%%%%%%%%%%%%%%%%%%%
Let us now consider the tensorized \Tscheb expansion \eqref{Cheb:expand} 
of the parametric solution $u(\bsy) \in \bcX$ of the 
affine parametric operator equation \eqref{eq:main}, i.e.,
\begin{equation}\label{Cheb:expand2}
%\forall \bsy\in U:\qquad 
u(\bsy) = \sum_{\bnu \in \cF} d_{\bnu} T_{\bnu}(\bsy)\qquad \mbox{ for all } \bsy\in U\;.
\end{equation}
Such an expansion is valid with unconditional convergence in $L_2(U;\eta)$ where
$\eta$ denotes the countable product \Tscheb measure \eqref{prod:Cheb:meas}, %on $U$ 
see e.g., \cite{Hanse486}.
The following estimate on the norms of the coefficients $d_{\bnu} \in \bcX$ 
will be crucial for us, see also \cite[Proposition 5.2]{Hanse486}. 
Below, we will use the usual notation
$\rho^{\bnu} := \prod_{j \geq 1} \rho_j^{\nu_j}$ with the understanding that $0^0 = 1$.
\begin{prop}\label{prop:dnu:bound} 
Let $\bsrho = (\rho_j)_{j \geq 1}$ be a $\delta$-admissible sequence.
Then
\begin{equation}\label{dnu:estimate}
\|d_{\bnu}\|_{\bcX} 
\leq 
(\delta \mu_0)^{-1} \|f\|_{\bcY'} 2^{\|\bnu\|_0/2} \rho^{-\bnu}.
\end{equation}
\end{prop}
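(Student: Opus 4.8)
The plan is to extract each \Tscheb coefficient $d_{\bnu}$ as a tensorized
integral of the holomorphic solution map $\bsz \mapsto u(\bsz)$ against the
\Tscheb polynomials, and then to deform each univariate contour out to the
boundary circle $|z_j| = \rho_j$ of the polydisc $\cD_{\bsrho}$, using the
analyticity and the uniform a-priori bound \eqref{eq:aprioriz} established in
Section~\ref{sec:AnCont}. Concretely, by orthonormality \eqref{Tnu:orth} the
coefficients are
\[
d_{\bnu} = \int_U u(\bsy)\, T_{\bnu}(\bsy)\, d\eta(\bsy).
\]
First I would pass to the \Tscheb angular substitution $y_j = \cos\theta_j$,
under which $d\eta$ becomes the normalized uniform (Lebesgue) measure on the
torus $[-\pi,\pi]^{\infty}$ and $T_{\nu_j}(\cos\theta_j) = \sqrt2\cos(\nu_j\theta_j)$
for $\nu_j \ge 1$ (and $T_0 \equiv 1$). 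Writing the cosines via Euler's formula
turns each univariate factor into a sum of two Fourier monomials
$e^{\pm i\nu_j\theta_j}$, so that the coefficient becomes, up to the factor
$2^{\|\bnu\|_0/2}$ coming from the $\sqrt2$'s, a Fourier–Laurent coefficient of
the periodic map $\bstheta \mapsto u(\cos\theta_1,\cos\theta_2,\dots)$.

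Next I would invoke analytic continuation: since $u(\bsz)$ is holomorphic in
each $z_j$ on the disc $\cD_{\rho_j}$ (with $\rho_j > 1$ by $\delta$-admissibility,
so that the real interval $[-1,1]$ sits strictly inside), the function
$z_j \mapsto u(\bsz)\, z_j^{\,\pm\nu_j}$ is holomorphic there and the univariate
integral is a Cauchy/Laurent coefficient. The key maneuver is to deform the
contour of integration in each active variable $j \in \supp\bnu$ from the unit
circle (the image of the real interval) out to the circle of radius $\rho_j$.
Because of analyticity the integral is unchanged, but on the enlarged circle
each monomial $z_j^{\,\pm\nu_j}$ has modulus $\rho_j^{\nu_j}$, which when divided
back out (the Laurent extraction carries a factor $\rho_j^{-\nu_j}$ relative to
the shifted contour) produces the decay factor $\rho^{-\bnu}$. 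Applying this in
every active coordinate and bounding the $\bcX$-norm of $u(\bsz)$ on the
polydisc boundary by $(\delta\mu_0)^{-1}\|f\|_{\bcY'}$ via \eqref{eq:aprioriz},
together with the fact that the normalized measure has total mass one, yields
exactly
\[
\|d_{\bnu}\|_{\bcX} \le (\delta\mu_0)^{-1}\|f\|_{\bcY'}\, 2^{\|\bnu\|_0/2}\, \rho^{-\bnu}.
\]

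The main obstacle I anticipate is making the contour-shift rigorous in the
genuinely infinite-dimensional setting. For a fixed $\bnu \in \cF$ only finitely
many coordinates $j \in \supp\bnu$ are active, and in the inactive coordinates
$T_0 \equiv 1$ integrates the solution against the product measure; so the shift
need only be carried out in the finitely many active variables, with the
remaining infinitely many variables handled by the uniform bound
\eqref{eq:aprioriz}, which is dimension-free. Care is needed to justify
interchanging the (infinite) product integral with the finite sequence of
contour deformations — this should follow from the uniform boundedness of
$u(\bsz)$ on the whole polydisc $\cD_{\bsrho}$ together with dominated
convergence / Fubini on the Banach-space-valued integrand, since the
$\bcX$-valued integrand is bounded and measurable. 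A secondary technical point
is confirming that holomorphy of $u(\bsz)$ in each $z_j$ separately (Hartogs /
the resolvent representation already displayed in Section~\ref{sec:AnCont})
suffices to legitimize the univariate Cauchy-theoretic arguments on each active
circle; this is exactly what the Neumann-series and resolvent analysis preceding
the proposition provides.
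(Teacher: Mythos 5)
Your overall strategy coincides with the paper's: extract $d_{\bnu}$ by orthonormality, substitute $y_j=\cos\theta_j$ in the active coordinates, interpret the result as Fourier--Laurent coefficients of the analytically continued solution, shift contours, apply the a-priori bound \eqref{eq:aprioriz}, and iterate over the finitely many $j\in\supp\bnu$. The technical worries you flag (Fubini, infinite dimensionality) are indeed harmless and are dispatched in the paper exactly as you suggest.

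The genuine gap is in your central deformation step. After Euler's formula, the $j$-th Chebyshev factor contributes the two monomials $\zeta_j^{+\nu_j}$ and $\zeta_j^{-\nu_j}$ (with $\zeta_j=e^{i\theta_j}$), and you propose to deform \emph{both} outward to $|\zeta_j|=\rho_j$, asserting that ``each monomial $z_j^{\pm\nu_j}$ has modulus $\rho_j^{\nu_j}$'' there. This is false for one of the two: on the enlarged circle $|\zeta_j^{+\nu_j}|=\rho_j^{+\nu_j}$, so the half of the integral carrying $e^{+i\nu_j\theta_j}$ (the Laurent coefficient of negative index) is bounded, under an outward shift, by a quantity that \emph{grows} like $\rho_j^{\nu_j}$; the outward deformation yields decay only for the $\zeta_j^{-\nu_j-1}$ half. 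The paper's proof fixes precisely this point: it splits the Chebyshev factor as $(\zeta^{n}+\zeta^{-n})/2$ and deforms the $\zeta^{n-1}$ integral \emph{inward} to $|\zeta|=\sigma^{-1}$ and the $\zeta^{-n-1}$ integral \emph{outward} to $|\zeta|=\sigma$ for $1<\sigma<\rho_1$, each then contributing $\sigma^{-n}$. (Alternatively one could invoke the symmetry $\cJ(\zeta)=\cJ(1/\zeta)$ of the Joukowsky map, which forces the two Laurent coefficients to coincide; but some such idea is required --- a purely one-sided shift fails.) A second, related omission: the deformation takes place in the $\zeta_j$-plane, not in the parameter plane, so holomorphy of $u$ in $z_j$ on the disc $\cD_{\rho_j}$ does not by itself legitimize it. One needs that $\cJ$ maps the annulus $\{\sigma^{-1}\le|\zeta_j|\le\sigma\}$ into the region bounded by the Bernstein ellipse, which is contained in $\cD_{\rho_j}$ when $\sigma\le\rho_j$; this containment, stated explicitly in the paper's proof, is what converts the resolvent-based holomorphy in $z_j$ into holomorphy of $\zeta_j\mapsto u(\cJ(\zeta_j),\cdot)$ on the annulus where the contours are moved.
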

\begin{proof} 
We proceed similarly to \cite[Section 3]{ri90-1} and \cite{tr13}. 
By orthonormality \eqref{Tnu:orth} of the $T_{\bnu}$, 
\[
d_\bnu = \int_{U} u(\bsy) T_{\bnu}(\bsy) d\eta(\bsy) \in \bcX.
\]
For $\bnu = 0$, the bound follows from 
\[
\|d_0\|_V = \left\| \int_U u(\bsy) d \eta(\bsy) \right\|_V \leq \max_{\bsy \in U} \|u(\bsy)\| \leq (\delta \mu_0)^{-1} \|f\|_{\bcY'},
\]
where we have used the a-priori bound \eqref{eq:aprioriz} and the fact that $\eta$ is a probability measure.
Let us assume now that
$\bnu = n \mathbf{e}_1 = (n,0,0,0,\hdots)$ with $n \in \N$. 
Then writing $U = [-1,1] \times U'$ and $u(\bsy) = u(y_1,\bsy')$ 
with $\bsy'=(y_2,y_3,\hdots)$, 
we have
\[
d_{n \mathbf{e}_1} 
= 
\int_{\bsy' \in U'} \int_{-1}^1 T_n(t)  u(t,\bsy') \frac{dt}{\pi\sqrt{1-t^2}} d\eta(\bsy').
\]
By a change of variables and the definition \eqref{eq:DefTj} of the normalized Chebyshev polynomials we obtain
\begin{align}
\int_{-1}^1 T_n(t)  u(t,\bsy') \frac{dt}{\pi\sqrt{1-t^2}} 
& = \frac{\sqrt{2}}{\pi} \int_0^\pi u(\cos(\phi), \bsy') \cos(n \phi) d\phi 
= \frac{\sqrt{2}}{2\pi} \int_{-\pi}^\pi u(\cos(\phi),\bsy') \cos(n \phi) d\phi 
\notag\\
& = \frac{\sqrt{2}}{2\pi i} \int_{|\zeta|=1} 
u\left( \frac{\zeta + \zeta^{-1}}{2},\bsy'\right) 
\frac{\zeta^n + \zeta^{-n}}{2} \frac{d\zeta}{\zeta}
\notag\\
& = \frac{\sqrt{2}}{4\pi i} \int_{|\zeta|=1} u\left( \frac{\zeta + \zeta^{-1}}{2},\bsy'\right) 
\zeta^{n-1} d\zeta + 
\frac{\sqrt{2}}{4 \pi i} \int_{|\zeta|=1} 
u\left( \frac{\zeta + \zeta^{-1}}{2},\bsy'\right) \zeta^{-n-1} d\zeta,\notag
\end{align}
where we have applied the transformation $\zeta = e^{i \phi}$. 
The Joukowsky map
$\zeta \mapsto {\mathcal J}(\zeta) =  (\zeta + \zeta^{-1})/{2}$
maps the unit circle $\{\zeta \in \C : |\zeta| = 1\}$ 
onto the interval $[-1,1]$ (traversed twice) and more generally, 
both circles $S_\sigma = \{\zeta \in \C : |\zeta| = \sigma\}$ and 
$S_{\sigma^{-1}}$ for $\sigma > 1$ onto the Bernstein ellipse 
$B_\sigma = \{(\zeta + \zeta^{-1})/2 : |\zeta| = \sigma\}$.
Furthermore, for $\sigma > 1$, it maps the annulus 
$A_\sigma = \{\zeta \in \C : \sigma^{-1} \leq |\zeta| \leq \sigma\}$ 
onto the region $E_\rho$ bounded
by the Bernstein ellipse $B_\rho$. %By Lemma~\ref{lem:holomorphic}, 
As outlined in the previous section, 
the function $z_1 \mapsto u(z_1,\bsy')$ is analytic on the disc 
$D_{\rho_1} = \{z_1 \in \C : |z_1| \leq \rho_1\}$ and since 
$E_{\rho_1} \subset D_{\rho_1}$ it is in particular analytic on $E_{\rho_1}$. 
Therefore, by the previous remarks, the functions
\[
\zeta \mapsto u\left( \frac{\zeta + \zeta^{-1}}{2},\bsy'\right) \zeta^{n-1} \quad \mbox{ and } \quad 
\zeta \mapsto u\left( \frac{\zeta + \zeta^{-1}}{2},\bsy'\right) \zeta^{-n-1}
\]
are analytic on the annulus $A_{\rho_1}$. 
Hence, by Cauchy's theorem, we have, for any $1 < \sigma < \rho_1$,
\begin{align*}
& \int_{-1}^1 T_n(t)  u(t,\bsy') \frac{dt}{\pi\sqrt{1-t^2}}\\
&=  \frac{\sqrt{2}}{4 \pi i} \int_{|\zeta|=\sigma^{-1}} u\left( \frac{\zeta + \zeta^{-1}}{2},\bsy'\right) \zeta^{n-1} d\zeta 
+ \frac{\sqrt{2}}{4 \pi i}  \int_{|\zeta|=\sigma} u\left( \frac{\zeta + \zeta^{-1}}{2},\bsy'\right) \zeta^{-n-1} d\zeta. 
\notag
\end{align*}
Employing the a priori bound \eqref{eq:aprioriz} 
we obtain
\begin{align}
\left\| \int_{-1}^1 T_n(t)  u(t,\bsy') \frac{dt}{\pi\sqrt{1-t^2}} \right\|_{\bcX} 
& \leq \frac{\sqrt{2} 2 \pi \sigma^{-1}}{4\pi} \frac{\|f \|_{\bcY'}}{\delta \mu_0} \sigma^{-n+1} 
+ 
\frac{\sqrt{2} 2 \pi \sigma}{4\pi} \frac{\|f \|_{\bcY'}}{\delta \mu_0}  \sigma^{-n-1}
\notag\\
& =\frac{\|f \|_{\bcY'}}{\delta \mu_0}  \sqrt{2} \sigma^{-n}.
\end{align}
Since this holds for any $\sigma < \rho_1$ and since $\eta$ 
is a probability measure we obtain by another integration
\[
\|d_{n\mathbf{e}_1}\|_V \leq (\delta \mu_0)^{-1} \|f \|_{\bcY'} \sqrt{2} \rho_1^{-n}.
\]
For general $d_{\bnu}$, with $\bnu \in \cF$, we
reason analogously: given $\bnu\in \cF$,
Cauchy's integral theorem is iterated in the (finitely many) variables 
$\{ z_j \in \C: \nu_j \ne 0 \}$.
\end{proof}

With these tools at hand we can now consider weighted $\ell_p$ estimates of the coefficients $d_{\bnu}$.
We introduce a weight sequence $\bsv = (v_j)_{j \geq \N}$ with $v_j \geq 1$ 
on the natural numbers. 
We strengthen Assumption~\ref{ass:AssBj} by requiring that there
exists a constant $0 < \kappa_{v,p} < 1$ such that
\begin{equation} \label{eq:Bjsmall:weight} %\tag{\textbf{A2}}
 \sum_{j\geq 1} \beta_{0,j} %\tilde{v}_j
 v_j^{(2-p)/p} \leq \kappa_{v,p} \;,
 \quad\mbox{where}\quad
 \beta_{0,j} \,:=\, \| A_0^{-1} A_j \|_{\cL(\cX,\cX)}\;,
 \quad j=1,2,\ldots,
% \;.
\end{equation}
and that
\begin{equation}\label{v:lp}
 \sum_{j\geq 1} \beta_{0,j}^p v_j^{2-p} 
 < \infty.
\end{equation}
Since $v_j \geq 1$, these assumptions imply \eqref{eq:Bjsmall} and \eqref{eq:assump_t} for $t=0$.
Associated to the weight $\bsv$ and a number $\theta \geq 1$, 
we introduce a weight sequence 
$\omega = (\omega_\bnu)_{\bnu \in \cF}$ on $\cF$ 
via
\begin{equation}\label{weightF}
\omega_{\bnu} = \prod_{j\in \supp \bnu} \theta v_j^{\nu_j} = \theta^{\| \bnu \|_0} \bsv^{\bnu}.
\end{equation}
We have the following bound on the weighted $\ell_p$-summability of the coefficient sequence $(d_{\nu})_{\nu \in \cF}$, 
extending main results from \cite{codesc10-1,codesc11,Hanse486}.

\begin{thm}\label{thm:weightlp} Let $0 < p \leq 1$. Assume that \eqref{eq:Bjsmall:weight} and \eqref{v:lp} hold for some
weight sequence  $\bsv \geq 1$.
For $\theta \geq 1$ construct a weight sequence $\omega$ on $\cF$ via \eqref{weightF}.
Then the sequence of norms of the coefficients 
$(\|d_\nu\|_{\bcX})_{\nu \in \cF}$ of the parametric
solution $u(\bsy)$ in the 
tensorized \Tscheb expansion \eqref{Cheb:expand2} is contained
in $\ell_{\omega,p}(\cF)$, % (for the same value of $p$),
i.e., $\sum_{\bnu \in \cF} \omega_{\bnu}^{2-p} \|d_{\bnu}\|_{\bcX}^p < \infty$.
\end{thm}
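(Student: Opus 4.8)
The plan is to start from the coefficient bound of Proposition~\ref{prop:dnu:bound} and to absorb the weight sequence $\bsv$ through a judicious, $p$-dependent choice of the admissible radii $\bsrho$. Concretely, I would substitute $\rho_j = v_j^{(2-p)/p}\tilde\rho_j$ with $\tilde\rho_j\geq 1$, where the exponent $(2-p)/p$ is exactly the one appearing in \eqref{eq:Bjsmall:weight}. This exponent is tuned so that two things happen at once. First, the base choice $\tilde\rho_j\equiv 1$, i.e.\ $\rho_j = v_j^{(2-p)/p}$, is already $\delta$-admissible in the sense of \eqref{delta:admissible}: by \eqref{eq:Bjsmall:weight} one has $\sum_{j\geq 1}\rho_j\beta_{0,j} = \sum_{j\geq 1}v_j^{(2-p)/p}\beta_{0,j}\leq\kappa_{v,p}<1$, so for any fixed $\delta\in(0,1-\kappa_{v,p})$ every $\tilde\rho_j\geq 1$ with $\sum_{j\geq 1}\tilde a_j\tilde\rho_j\leq 1-\delta$ yields an admissible $\bsrho$, where I abbreviate $\tilde a_j := v_j^{(2-p)/p}\beta_{0,j}$. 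Second, inserting the substitution into \eqref{dnu:estimate} and into the definition \eqref{weightF} of $\omega_{\bnu}$, the factor $\bsv^{(2-p)\bnu}$ coming from $\omega_{\bnu}^{2-p}$ cancels precisely against the factor $\rho^{-p\bnu} = \bsv^{-(2-p)\bnu}\tilde\rho^{-p\bnu}$ coming from \eqref{dnu:estimate}, leaving
\[
\omega_{\bnu}^{2-p}\|d_{\bnu}\|_{\bcX}^p \leq \left((\delta\mu_0)^{-1}\|f\|_{\bcY'}\right)^p\bigl(\theta^{2-p}2^{p/2}\bigr)^{\|\bnu\|_0}\,\tilde\rho^{-p\bnu}.
\]

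This reduces the weighted assertion to an \emph{unweighted} statement for the modified sequence $(\tilde a_j)_{j\geq 1}$, which by construction satisfies the two classical hypotheses $\sum_{j\geq 1}\tilde a_j\leq\kappa_{v,p}<1$ (from \eqref{eq:Bjsmall:weight}) and $\sum_{j\geq 1}\tilde a_j^{\,p} = \sum_{j\geq 1}\beta_{0,j}^{\,p} v_j^{2-p}<\infty$ (from \eqref{v:lp}). Thus it remains to prove
\[
\sum_{\bnu\in\cF}K^{\|\bnu\|_0}\,\inf_{\tilde\rho}\tilde\rho^{-p\bnu}<\infty,\qquad K := \theta^{2-p}2^{p/2},
\]
the infimum running over admissible $\tilde\rho$ for $(\tilde a_j)$. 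For $\theta = 1$ this is precisely the summability underlying the unweighted $\ell_p$-bound for tensorized \Tscheb expansions of \cite[Proposition 5.2]{Hanse486} (the \Tscheb normalization being responsible for the factor $2^{p/2}$ per active coordinate), and I would follow that argument. For each $\bnu$ one selects an admissible $\tilde\rho=\tilde\rho(\bnu)$, distributing the available budget across the active coordinates $j\in\supp\bnu$, which converts $\inf_{\tilde\rho}\tilde\rho^{-\bnu}$ into a product bound of the form $\binom{|\bnu|}{\bnu}\,\tilde a^{\bnu}$, with $|\bnu|=\sum_j\nu_j$, up to constants depending only on $\|\bnu\|_0$ and on $|\bnu|$. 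Summing the resulting combinatorial series then splits into a geometric control of the total polynomial order, governed by the smallness $\sum_j\tilde a_j<1$, and a control of the number of active coordinates, governed by $(\tilde a_j)\in\ell_p$.

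The main obstacle is this final combinatorial summation, and in particular checking that the extra per-coordinate constant $\theta^{2-p}$ (absent in the unweighted case) does not destroy convergence. I expect it to be harmless for a structural reason: in the support-wise summation each active coordinate $j$ contributes a factor bounded by $\theta^{2-p}$ times a quantity of order $\tilde a_j^{\,p}$, so the product over active coordinates converges as soon as $\sum_{j}\theta^{2-p}\tilde a_j^{\,p}=\theta^{2-p}\sum_j\tilde a_j^{\,p}<\infty$, which holds by \eqref{v:lp}; the constant $\theta^{2-p}$ never enters the geometric control of the total order, which remains governed solely by $\sum_j\tilde a_j<1$. Some care is still needed in the combinatorial estimate—Stirling-type bounds relating $\prod_j\nu_j^{\nu_j}$ to the multinomial coefficient, and the clamping $\tilde\rho_j\geq 1$ on inactive coordinates—but none of these steps interacts with the weight $\bsv$. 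In this way the reduction isolates all the genuinely new difficulty into the single transparent substitution $\rho_j = v_j^{(2-p)/p}\tilde\rho_j$, after which the remainder is the established \Tscheb summability argument applied to $(\tilde a_j)$.
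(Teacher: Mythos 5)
Your substitution step is sound and is in fact exactly what the paper does: the paper's admissible radii are precisely of your form $\rho_j = v_j^{(2-p)/p}\tilde\rho_j$ (with $\tilde\rho_j=\alpha$ on a finite set $E$ and a $\bnu$-dependent choice on its complement $F$), and the cancellation of $\bsv^{(2-p)\bnu}$ against $\rho^{-p\bnu}$, leaving a constant per active coordinate and the reduced sequence $\tilde a_j = v_j^{(2-p)/p}\beta_{0,j}=\tilde v_j\beta_{0,j}$, is the computation in \eqref{psum_factors}. The gap is in how you dispose of that constant $K=\theta^{2-p}2^{p/2}$. Your structural claim --- that the constant ``never enters the geometric control of the total order, which remains governed solely by $\sum_j \tilde a_j < 1$'' --- is not correct for the argument you invoke, and this is precisely where the paper's proof does its real work. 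In the Hansen--Schwab/CDS combinatorial step, $K^{\|\bnu\|_0}$ cannot be kept attached to a per-coordinate $\ell_p$ quantity: after the Stirling bound converting $\inf_{\tilde\rho}\tilde\rho^{-p\bnu}$ into a multinomial expression, one uses $K^{\|\bnu\|_0}\leq\prod_{j\in\supp\bnu}K^{\nu_j}$ and absorbs $K$ into the sequence itself; Theorem~\ref{thm:lp} is then applied to $h_j \asymp \delta^{-1} K\,\tilde a_j$ (in the paper, $h_j = 2\sqrt{2}\,e\,\delta^{-1}\tilde\theta\,\tilde v_j\beta_{0,j}$), and its hypothesis requires $\sum_j h_j<1$. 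So the constant enters exactly the $\ell_1$-smallness (``geometric'') condition, and $\kappa_{v,p}<1$ gives no control of $cK\kappa_{v,p}$. Trying instead to keep $K$ only once per active coordinate leads to sums of the type $\sum_{\bnu}K^{\|\bnu\|_0}\frac{|\bnu|!}{\bnu!}\tilde a^{\bnu}$, where the multinomial factor couples all coordinates; a naive head/tail factorization costs a factor $\binom{|\bnu|}{|\bnu_E|}\leq 2^{|\bnu|}$, i.e., inflates $\tilde a_j$ by $2$, which again destroys the smallness hypothesis on the head.

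The missing idea is the head/tail splitting with a \emph{$\theta$-dependent} tail threshold, which is the paper's condition \eqref{F:def}: choose a finite $E$ and $F=\N\setminus E$ with $\sum_{j\in F}\tilde v_j\beta_{0,j}\leq\delta/(8\tilde\theta)$. On $E$ one takes $\bnu$-independent radii $\alpha\tilde v_j$ with a fixed $\alpha>1$, so each $j\in E$ contributes a convergent geometric series and the per-coordinate constant is harmless simply because $E$ is finite --- no smallness needed there. The $\bnu$-dependent (multinomial) choice of radii is made only on $F$, so the multinomial factor is $\frac{|\bnu_F|!}{\bnu_F!}$, coupling only tail coordinates, and Theorem~\ref{thm:lp} is applied to $(h_j)_{j\in F}$, whose sum is $<1$ precisely because the tail threshold was taken proportional to $1/\tilde\theta$. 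This works because tail sums of the convergent series $\sum_j\tilde a_j$ can be made arbitrarily small --- not because the constant stays out of that part of the argument. (The same device is already needed in the unweighted case you cite, to handle the Chebyshev factor $\sqrt{2}$ per coordinate.) With this correction your proposal becomes the paper's proof; without it, the final summation step fails.
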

The proof is based on the following observation in \cite{codesc10-1}, 
where we use the convention that for $\bnu \in \cF$ 
we define $\bnu! = \prod_{j \in \supp \bnu} \nu_j!$ and $| \bnu | = \sum_{j \geq 1} \nu_j$. 
Note that $| \bnu |! \geq \bnu !$.
\begin{thm}\label{thm:lp} 
For $0<p \leq 1$ and a sequence $(a_j)_{j \geq 1}$, we have 
$\left(\frac{|\bnu|!}{\bnu!} a^{\bnu}\right)_{\bnu \in \cF} \in \ell_p(\cF)$ 
if and only if $\sum_{j\geq 1} a_j < 1$ and $(a_j)_{j \geq 1} \in \ell_p(\N)$. 
\end{thm}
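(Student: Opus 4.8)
The plan is to establish the two implications separately, treating the easy \emph{necessity} direction first and reserving the real work for \emph{sufficiency}. Throughout I assume $a_j\ge 0$ (as in the application), write $a^{p\bnu}=\prod_j a_j^{p\nu_j}=(a^{\bnu})^p$, and set $\Sigma:=\sum_{\bnu\in\cF}\big(\tfrac{|\bnu|!}{\bnu!}\big)^p a^{p\bnu}$, so the claim is that $\Sigma<\infty$ iff $\sum_{j\ge1}a_j<1$ and $(a_j)\in\ell_p(\N)$. For necessity, suppose $\Sigma<\infty$. Testing the sum on the unit multi-indices $\bnu=\mathbf e_j$ (where $\tfrac{|\bnu|!}{\bnu!}=1$) gives $\sum_j a_j^p\le\Sigma<\infty$, hence $(a_j)\in\ell_p(\N)$; since $p\le1$ this also forces $\sum_j a_j<\infty$. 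Grouping $\Sigma$ by total degree $n=|\bnu|$ and using the elementary subadditivity $(\sum_k c_k)^p\le\sum_k c_k^p$ (valid for $0<p\le1$, $c_k\ge0$) together with the multinomial identity $\sum_{|\bnu|=n}\tfrac{n!}{\bnu!}a^{\bnu}=(\sum_j a_j)^n$, I get $\Sigma\ge\sum_{n\ge0}\big(\sum_j a_j\big)^{pn}$, a geometric series that is finite only if $\sum_j a_j<1$.

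For sufficiency, assume $\sum_j a_j<1$ and $(a_j)\in\ell_p$. The key device is a multiplicative splitting $a_j=c_jd_j$ chosen so that H\"older's inequality with conjugate exponents $1/p$ and $1/(1-p)$ decouples the combinatorial multinomial weight from a purely product-type factor. Writing $\big(\tfrac{|\bnu|!}{\bnu!}\big)^p a^{p\bnu}=\big(\tfrac{|\bnu|!}{\bnu!}c^{\bnu}\big)^p\,(d^{\bnu})^p$ and applying H\"older over $\bnu\in\cF$ yields
\[
\Sigma\le\Big(\sum_{\bnu\in\cF}\tfrac{|\bnu|!}{\bnu!}c^{\bnu}\Big)^p\Big(\sum_{\bnu\in\cF}d^{q\bnu}\Big)^{1-p},\qquad q:=\tfrac{p}{1-p}.
\]
The first factor equals $(1-\sum_j c_j)^{-p}$ by the multinomial/geometric identity and is finite as soon as $\sum_j c_j<1$; the second factor equals $\big(\prod_j(1-d_j^{\,q})^{-1}\big)^{1-p}$ and is finite as soon as $d_j^{\,q}<1$ for all $j$ and $\sum_j d_j^{\,q}<\infty$. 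The degenerate case $p=1$ is handled directly, since there $\Sigma=\sum_{\bnu}\tfrac{|\bnu|!}{\bnu!}a^{\bnu}=(1-\sum_j a_j)^{-1}$ is finite exactly when $\sum_j a_j<1$, and $\ell_1$-summability is automatic.

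The crux — and the step I expect to be the main obstacle — is to exhibit a single split meeting both product-side requirements simultaneously, because the two hypotheses pull in opposite directions and the naive bound $\big(\tfrac{|\bnu|!}{\bnu!}\big)^p\le\tfrac{|\bnu|!}{\bnu!}$ would only give convergence under the too-strong assumption $\sum_j a_j^p<1$. The guiding observation is that on the tail the choice $c_j=a_j^{\,p}$, $d_j=a_j^{1-p}$ makes both $\sum c_j$ and $\sum d_j^{\,q}$ equal to the $\ell_p$-tail $\sum a_j^{\,p}$, which is small; this choice is inadmissible on the finitely many large indices (where $a_j$ may exceed $1$), so there I instead keep $d_j$ equal to a fixed constant in $(0,1)$ and set $c_j=a_j/d_j$. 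Concretely, using $(a_j)\in\ell_p$ I truncate at an index $J$ for which $\sum_{j>J}a_j^{\,p}<\tfrac12\big(1-\sum_j a_j\big)$, take $c_j=a_j^{\,p},\,d_j=a_j^{1-p}$ for $j>J$, and $c_j=a_j/\delta_0,\,d_j=\delta_0$ for $j\le J$ with $\delta_0<1$ chosen close enough to $1$ that $\delta_0^{-1}\sum_{j\le J}a_j<\tfrac12\big(1+\sum_j a_j\big)$. Then $\sum_j c_j<1$ while $\sum_j d_j^{\,q}=J\delta_0^{\,q}+\sum_{j>J}a_j^{\,p}<\infty$ with $\sup_j d_j^{\,q}<1$, so both H\"older factors are finite and $\Sigma<\infty$. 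Verifying that this truncation indeed reconciles the global constraint $\sum_j a_j<1$ (which tames the combinatorial weight on the low-index, high-degree part) with the summability constraint $(a_j)\in\ell_p$ (which tames the infinitely many small indices) is the single delicate point of the argument.
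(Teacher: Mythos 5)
Your proof is correct; note, however, that there is no proof in the paper to compare it with: Theorem~\ref{thm:lp} is quoted as an observation from \cite{codesc10-1} (Cohen--DeVore--Schwab), so your argument reconstructs what the authors delegate to that reference. Both directions check out. For necessity, testing on the Kronecker indices $\bnu=\mathbf{e}_j$ gives $(a_j)\in\ell_p(\N)$, and combining $p$-subadditivity with the multinomial identity $\sum_{|\bnu|=n}\frac{n!}{\bnu!}a^{\bnu}=\bigl(\sum_{j}a_j\bigr)^{n}$ shows that the geometric series $\sum_{n\ge 0}\bigl(\sum_j a_j\bigr)^{np}$ is dominated by $\Sigma$, forcing $\sum_j a_j<1$. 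For sufficiency, H\"older's inequality with exponents $1/p$ and $1/(1-p)$ applied to $a_j=c_jd_j$ reduces the claim to $\sum_j c_j<1$ together with $d_j^{q}<1$ for all $j$ and $\sum_j d_j^{q}<\infty$, where $q=p/(1-p)$; writing $\sigma:=\sum_j a_j<1$, your head/tail choice meets both budgets, since $\sum_{j>J}a_j^p<\frac{1}{2}(1-\sigma)$ on the tail and, on the head, a $\delta_0<1$ with $\delta_0^{-1}\sum_{j\le J}a_j<\frac{1}{2}(1+\sigma)$ exists because $\sum_{j\le J}a_j\le\sigma<\frac{1}{2}(1+\sigma)$; the case $p=1$, where the H\"older exponents degenerate, is rightly treated separately. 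It is worth noting that your two-scale splitting is precisely the device the paper uses one level up: in the proof of Theorem~\ref{thm:weightlp}, the index set is split into a finite head $E$ and a tail $F$ of small weighted $\ell_p$-mass, and Theorem~\ref{thm:lp} is invoked only on the tail block, where the smallness condition $\sum_{j\in F}h_j<1$ holds.

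One inaccuracy in your motivating prose (it does not affect the validity of the construction): you say the tail choice $c_j=a_j^p$, $d_j=a_j^{1-p}$ is inadmissible on the head because ``$a_j$ may exceed $1$'' there. Under the sufficiency hypotheses $a_j\ge 0$ and $\sum_j a_j<1$, every $a_j$ is strictly less than $1$, so this never happens. The genuine obstruction --- which your truncation does handle --- is that $a_j^p\ge a_j$ when $a_j\le 1$, so the global choice could give $\sum_j c_j=\sum_j a_j^p\ge 1$ and the first H\"older factor would diverge; only the tail of $\sum_j a_j^p$ can be made small, not the whole sum, and that is exactly what choosing the truncation index $J$ exploits.
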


\begin{proof}[Proof of Theorem~\ref{thm:weightlp}] 
We proceed similarly as in \cite{codesc11}. 
The idea is to construct, for each $\bnu\in \cF$,
a suitable $\delta$-admissible sequence $\bsrho = (\rho_j)$ 
(depending, in general, on $\bnu$) with $\rho_j \geq 1$, 
where we choose $\delta = (1-\kappa_{v,p})/2$ so that by \eqref{dnu:estimate} 
\begin{equation}\label{dnu:bound2}
\|d_{\bnu}\|_{\bcX} \leq \frac{2}{\delta\mu_0} \|f\|_{\bcY'} 2^{\|\bnu\|_0/2} \prod_{j \geq 1} \rho_j^{-\nu_j}.
\end{equation}
For convenience we introduce $\tilde{v}_j = v_j^{(2-p)/p}$ and  $\tilde{\theta} = \theta^{(2-p)/p}$.
For given $\bnu \in \cF$, we construct the sequence $\bsrho$ 
by first choosing a finite index set $E \subset \N$ such that, 
for $F = \N \setminus E$,
\begin{equation}\label{F:def}
\sum_{j \in F} \tilde{v}_j \beta_{0,j} \leq \frac{\delta}{8 \tilde{\theta}}.
\end{equation}
Such a set $E$ exists by Assumption \eqref{eq:Bjsmall:weight}.
We further choose $\alpha > 1$ such that
\[
(\alpha -1) \sum_{j \in E} \tilde{v}_j \beta_{0,j} < \frac{\delta}{2}.
\]
Then we define the sequence $\bsrho$ via
\[
\rho_j = \left\{ 
\begin{array}{cl} \alpha \tilde{v}_j & \mbox{ if } j \in E,
\\
\max\left\{\tilde{v}_j, \frac{\delta \nu_j}{2 |\bnu_F| \beta_{0,j}} \right\} & \mbox{ if } j \in F
\;.
\end{array} \right.,
\]
where $|\bnu_F| = \sum_{j \in F} \nu_j$.
The sequence $\bsrho$ is $\delta$-admissible since
\begin{align}
\sum_{j \geq 1} \rho_j \beta_{0,j}
& = \sum_{j \in E} \alpha \tilde{v}_j \beta_{0,j}
+ 
\sum_{j \in F} \max\left\{\tilde{v}_j, \frac{\delta \nu_j}{2 |\bnu_F| \beta_{0,j}} \right\} \beta_{0,j} 
\notag 
\\
& \leq (\alpha - 1) \sum_{j \in E} \tilde{v}_j \beta_{0,j} 
+
\sum_{j \in E} \tilde{v}_j \beta_{0,j} + \sum_{j \in F} \tilde{v}_j \beta_{0,j}  + \frac{\delta}{2} \notag
\\
& < \frac{\delta}{2} + \kappa_v + \frac{\delta}{2} = \kappa_v + \delta = 1-\delta.  \notag
\end{align}
Here, we have used that $v_j \geq 1$ together with \eqref{eq:Bjsmall:weight}, 
and furthermore that $\kappa_v = 1- 2\delta$ by the choice of $\delta$.
Therefore, the bound \eqref{dnu:bound2} is valid and implies
\[
\|d_{\bnu}\|_V \leq C_\delta 2^{\|\bnu\|_0/2}\prod_{j \in E} (\alpha \tilde{v}_j)^{-\nu_j} 
\prod_{j \in F} \min \left\{ \tilde{v}_j^{-\nu_j}, \left(\frac{|\bnu_F| g_j}{\nu_j}  \right)^{\nu_j}\right\}
\]
with $C_\delta= 2 (\delta \mu_0)^{-1} \|f\|_{V^*}$ and
\[
g_j = 2 \delta^{-1} \beta_{0,j}.
\]
Above we adopt the convention that a factor equals $1$ if $\nu_j = 0$.

Now we estimate the weighted $\ell_p$-norm with the weight $\omega_{\bnu}$. 
For convenience we introduce 
$\tilde{\omega}_{\bnu} 
 = \omega_{\bnu}^{(2-p)/2} = \tilde{\theta}^{\|\bnu\|_0} \prod_{j \geq 1} \tilde{v}_j^{\nu_j}$.
We let $\cF_F$ be the finitely supported sequences of natural numbers (including $0$) 
indexed by $F$ and likewise define $\cF_E$. We obtain
\begin{align}
\sum_{\bnu \in \cF} \omega_{\nu}^{2-p} \| d_{\bnu} \|_{\bcX}^p  & = \sum_{\bnu \in \cF} \tilde{\omega}_{\nu}^p \| d_{\bnu} \|_{\bcX}^p
 \leq C_\delta^p \sum_{\bnu} 2^{\|\bnu\|_0/2} \tilde{\theta}^{\|\bnu\|_0} \left( \prod_{j \in E} \tilde{v}_j^{p\nu_j} (\alpha \tilde{v}_j)^{-p\nu_j} \right) \left( \prod_{j \in F} \tilde{v}_j^{p \nu} \min \left\{ \tilde{v}_j^{-p\nu_j}, \left(\frac{|\bnu_F| g_j}{\nu_j}  \right)^{p\nu_j}\right\} \right) \notag\\
& \leq C_\delta^p \left( \sum_{\bnu \in \cF_E} (\sqrt{2}\tilde{\theta})^{\|\bnu\|_0} \prod_{j \in E} \alpha^{-p\nu_j}\right) \left(\sum_{\bnu \in \cF_F} (\sqrt{2} \tilde{\theta})^{\|\bnu\|_0} \prod_{j \in F} \left( \frac{|\bnu_F| \tilde{v}_j g_j}{\nu_j}  \right)^{p\nu_j} \right). \label{psum_factors}
\end{align}
Let us continue with the first factor above,
\[
 \sum_{\bnu \in \cF_E} (\sqrt{2} \tilde{\theta})^{\|\bnu\|_0} \prod_{j \in E} \alpha^{-p\nu_j} = \prod_{j \in E} \left(1 + \sqrt{2} \tilde{\theta} \sum_{n=1}^\infty \alpha^{-pn} \right)
 = \left(1+ \frac{\sqrt{2}\tilde{\theta} \alpha^{-p}}{1-\alpha^{-p}}\right)^{\#E}.
\]
For the second factor in \eqref{psum_factors}, it follows as in \cite[Section~3.2]{codesc11} 
(via Stirling's formula) that
\[
(\sqrt{2}\tilde{\theta})^{\|\bnu\|_0} \prod_{j \in F} \left( \frac{|\bnu_F| v_j g_j}{\nu_j}  \right)^{p\nu_j} 
\leq \frac{|\bnu_F|!}{\bnu!} \prod_{j \in F} (\tilde{v}_j g_j)^{\nu_j} \max\{1,\sqrt{2}\tilde{\theta} e \sqrt{\nu_j}\}
\leq \frac{|\bnu_F|!}{\bnu!} h^{\nu_F},
\]
where 
\[
h_j = e \sqrt{2} \tilde{\theta} v_j g_j = 2 \sqrt{2} e \delta^{-1} \tilde{\theta} \tilde{v}_j \beta_{0,j}. 
\]
By \eqref{F:def} we have
\[
\sum_{j \in F} h_j \leq \frac{2\sqrt{2}e}{8} < 1.
\]
Since $\mathbf{h} = (h_j)_{j \geq 1} \in \ell_p(\N)$ by Assumption~\eqref{v:lp}, 
it follows from Theorem~\ref{thm:lp} that the sequence
$(\frac{|\bnu_F|!}{\bnu!} \mathbf{h}^{\bnu_F})_{\bnu \in F}\in \ell_p(\cF_F)$ 
so that also the second factor in \eqref{psum_factors} is finite and, 
hence, $(\| d_{\bnu} \|_{\bcX} \tilde{\omega}_{\bnu})_{\bnu \in \cF} \in \ell_p(\cF)$ which means that 
$\sum_{\bnu \in \cF} \| d_{\bnu} \|_{\bcX}^p \omega_\bnu^{2-p} < \infty$. 
\end{proof}
\begin{remark}\label{rem:diff:eq} 
In case of the parametric diffusion equation \eqref{diff:eq:weak},
assumption \eqref{eq:Bjsmall:weight} 
may be relaxed to a weighted version of the uniform ellipticity assumption, i.e.,
\[
\sum_{j \geq 1} v_j^{(2-p)/p} |\psi_j(x)|  \leq \min\left\{\bar{a}(x) - r, R - \bar{a}(x) \right\} \quad \mbox{ for all } x \in D,
\]
while \eqref{v:lp} is replaced by
\[
\sum_{j \geq 1} v_j^{2-p} \|\psi_j\|_\infty^p < \infty \quad \mbox{ for some } 0 < p \leq 1.
\]
Under these conditions, the coefficients 
$d_{\bnu} \in \cX = H_0^1(D)$ in the \Tscheb expansion \eqref{Cheb:expand:sol} 
of the solution of the parametric equation satisfy 
$\sum_{\bnu} \|d_{\bnu}\|_V \omega_{\bnu}^{2-p}< \infty$ 
with the weights $\omega_{\bnu}$ given by \eqref{weightF}. 
This fact is shown analogously to the above proof, see also
\cite{codesc11} for unweighted $\ell_p$-summability 
in the diffusion equation context.
\end{remark}
\begin{remark} 
\label{remk:ChoiWgt}
We comment on possible choices for the weights.
\begin{itemize}
\item 
With the trivial weight $v_j = 1$, the above result generalizes the one from \cite{codesc11} 
in the sense
that $(\beta_{0,j}) \in \ell_p(\N)$ (or 
$(\| \psi_j\|_\infty) \in \ell_p(\N)$ in case of the diffusion equation) 
implies that $(\|d_{\bnu}\|_V)_{\bnu \in \cF}$ 
is contained in the weighted space $\ell_{\omega,p}(\cF)$
with $\omega_{\nu} = \theta^{\| \bnu\|_0}$, for any $\theta \geq 1$. 
As this weight grows exponentially with the 
number of nontrivial components in $\bnu$, the coefficients of $T_{\bnu}$ 
with many factors in the tensor product are unlikely to contribute much to the expansion.
\item 
The unweighted condition~\eqref{eq:Bjsmall} for some $\kappa<1$ already 
implies a weighted version. 
In fact, we may ``squeeze in'' weights of the form
$v_j = (1+ \tau)$ with $\tau>0$ sufficiently small 
so that the weighted summability condition \eqref{eq:Bjsmall:weight} 
holds with some $\kappa_{v,p}$ satisfying $\kappa < \kappa_v < 1$.
For this weight, if $(\beta_{0,j}) \in \ell_p(\N)$, 
then $(\beta_{0,j}) \in \ell_{p,v}(\N)$ 
so that Theorem~\ref{thm:weightlp} states that 
$(\|d_{\bnu}\|_V)_{\bnu \in \cF} \in \ell_{\omega,p}(\cF)$ 
for the weight
\[
\omega_{\bnu} = \theta^{\|\bnu\|_0} (1+\tau)^{\|\bnu\|_1}.
\]
\item Polynomial decay of the sequence $(\beta_{0,j})$ allows
to ``squeeze in'' a polynomially growing sequence of weights $v_j \geq 1$.
Suppose that $|\beta_{0,j}| \leq c j^{-t}$ for some $t > 1$ 
and sufficiently small $c> 0$ so that
\[
\sum_{j \geq 1} \beta_{0,j} \leq c \sum_{j \geq 1} j^{-t} \leq c \zeta(t) < 1,
\]
which means that the unweighted condition \eqref{eq:Bjsmall} is satisfied.
Now let $v_j = \gamma j^{\tau}$ for some $0 < \tau < t-1$ and $\gamma > 1$. 
Then
\[
\sum_{j \geq 1} v_j \beta_{0,j} \leq c \gamma \sum_{j \geq 1} j^{-(t-\tau)} = c \gamma \zeta(t-\tau).
\]
If $\tau$ and $\gamma$ are such that $\zeta(t-\tau) < (c\gamma)^{-1}$ (which is possible by $\zeta(t) < c^{-1}$ and continuity of the $\zeta$-function)
then the weight sequence $(v_j)$ is a valid choice as $\sum_{j \geq 1} \beta_{0,j} v_j \leq \kappa_{v} < 1$.
The resulting weight $\omega$ on $\cF$, i.e.,
\[
\omega_{\bnu} 
= \theta^{\|\bnu\|_0} \prod_{j \in \supp \bnu} v_j^{\bnu_j} 
= \theta^{\|\bnu\|_0} \prod_{j \in \supp \bnu}\gamma^{\bnu} j^{\tau \bnu_j}
\] 
growths polynomially with respect to the dimension $j$ 
and exponentially with respect to each 
$\nu_j$.
\item Similarly, exponential decay of the sequence $(\beta_{0,j})$
allows to choose an exponentially 
growing sequence of weights $v_j \geq 1$.
Assume that 
$\beta_{0,j} \leq c \alpha^j$ for some $0 < \alpha  < 1$ and that
$\sum_{j \geq 1} \beta_{0,j} \leq c \sum_{j \geq 1} \alpha^j = c \frac{\alpha}{1-\alpha} = \kappa < 1$
implying the unweighted condition \eqref{eq:Bjsmall}. 
Choosing $v_j := \sigma^j$ for some $\sigma > 1$ with $\sigma \alpha < 1$, 
we have
\[
\sum_{j \geq 1} v_j \beta_{0,j}
\leq c \sum_{j \geq 1} (\sigma \alpha)^j 
= c \frac{\sigma \alpha}{1-\sigma \alpha}.
\]
If $\sigma$ is sufficiently close to $1$, then 
$\sum_{j \geq 1} v_j \beta_{0,j} \leq \kappa_v < 1$ 
so that the weighted condition \eqref{eq:Bjsmall:weight} is satisfied.
The corresponding weight $\omega$ is defined as
\[
\omega_{\bnu} = \theta^{\|\bnu\|_0} \prod_{j \in \supp \bnu} \sigma^{j \nu_j},
\qquad 
\bnu \in \cF.
\]
\end{itemize}
\end{remark}

\begin{remark}\label{rem:other:pol} 
At this stage it is appropriate
to discuss why we prefer to work with tensorized \Tscheb polynomials rather
than other polynomial systems. Clearly, Taylor monomials are immediately 
ruled out because they do not form an orthonormal system
(with respect to any measure), so that the setup described above does not apply.
One-dimensional Legendre polynomials $L_j$ have the disadvantage that their $L_\infty$-norms 
grow as $\|L_j\|_\infty = \sqrt{2j+1}$, 
see e.g.\ \cite{rawa12}, so that the tensorized Legendre polynomials 
$L_{\bnu}(\bsy) = \prod_{j \in \supp_{\bnu}} L_{\nu_j}(y_j)$ yield
\[
\|L_{\bnu}\|_\infty = \prod_{j \in \supp {\bnu}} \sqrt{2\nu_j+1}.
\]
In principle, we can compensate for that by using weights $v_j$, $j \geq 1$, such that 
\begin{equation}\label{cond:Lbnu}
\omega_\bnu = \theta^{\| \bnu \|_0/2} \prod_{j \in \supp \bnu} v_j^{\nu_j} \geq \|L_{\bnu}\|_\infty, 
\end{equation}
see Theorem~\ref{thm:weightRIP} (it seems that an analog of Theorem~\ref{thm:weightlp} 
introducing these weights $\omega_{\bnu}$ 
also holds for tensorized Legendre polynomials, but details are not worked out yet).
However, \eqref{cond:Lbnu} 
puts stronger conditions on the set of admissible weight sequences 
$(v_j)_{j \geq 1}$ and thereby on the operators $A_j$ than required for \Tscheb polynomials. 

In the one-dimensional (or low dimensional) case, one may alternatively apply the preconditioning trick from \cite{rawa12} 
to overcome the problem of growing $L_\infty$-norms of the
Legendre polynomials. This demands to work with the premultiplied functions $Q_j(y) = \sqrt{\pi/2}(1-y^2)^{1/4}L_j(y)$ which satisfy
the nice uniform bound $\|Q_j\|_\infty \leq \sqrt{3}$ for all $j \in \N_0$, see \cite{rawa12}. 
However, in the $d$-dimensional case the functions $Q_{\bnu}(\bsy) = \prod_{j=1}^d Q_{\nu_j}(y_j)$ satisfy
$\|Q_j\|_\infty \sim \gamma^d$ for some $\gamma > 1$, essentially because $\|Q_0\|_\infty = \sqrt{\pi/2}$, and in particular in the case $d= \infty$,
the infinite product defining $Q_{\bnu}$ for $\bnu \in \cF$ does not even converge, which prohibits the preconditioning trick in the high or 
infinite-dimensional case.
\end{remark}

%%%%%%%%%%%%%%%%%%%%%%%%%%%%%%%%%%%%%%%%%%%%%%%%%%%%%%%%%%%%%%%%%
\section{Compressive Sensing Petrov-Galerkin Discretization}
\label{sec:CS}
%%%%%%%%%%%%%%%%%%%%%%%%%%%%%%%%%%%%%%%%%%%%%%%%%%%%%%%%%%%%%%%%%
Let us now return to our main goal of approximating the solution 
of the affine parametric equation \eqref{eq:main}, 
$ A(\bsy)\, u (\bsy) = f$, with $A(\bsy) = A_0 + \sum_{j \geq 1} y_j A_j$,
via compressive sensing techniques.
Consider the Chebyshev expansion \eqref{Cheb:expand}. 
In view of Condition \eqref{weight:phi}, 
we need to choose weights $\omega$ satisfying 
$\omega_j \geq \| T_{\bnu} \|_\infty = 2^{\|\bnu\|_0/2}$.
Given a weight sequence $\mathbf{v} = (v_j)_{j \in \N}$ with $v_j \geq 1$, 
the previous section suggests to use weights of the form
\begin{equation}\label{omega:def2}
\omega_{\bnu} = \theta^{\|\bnu\|_0} \prod_{j \in \supp \bnu} v_j^{\nu_j} \quad \mbox{ with } \theta = \sqrt{2}.
\end{equation}
For the sake of simplicity, 
we consider a functional evaluation of the solution $u(\bsy)$ in this paper, i.e., 
given a bounded linear functional $G: \bcX \to \R$, 
we are interested in numerically computing an approximation of
\[
F(\bsy) = G(u(\bsy)), \quad F : U \to \R\;.
\] 
Then, the tensorized Chebyshev approximation to $F(\bsy)$ 
is given by the (unconditionally convergent) expansion
\[
F(\bsy) = \sum_{\bnu \in \cF} g_\bnu T_{\bnu}(\bsy),
\]
where $g_\bnu = G(d_\bnu)$ with $d_\bnu \in \bcX$ as in \eqref{Cheb:expand2}.
By boundedness of $G$, the sequence $\mathbf{g}$ of coefficients 
$g_{\bnu} = G(d_{\bnu}) \in \R$ satisfies $\mathbf{g} \in \ell_{\omega,p}(\cF)$
if $(\|d_\bnu\|_{\bcX})_{\bnu \in \cF} \in \ell_{\omega,p}(\cF)$.
Sufficient conditions for this inclusion were obtained
in Theorem~\ref{thm:weightlp}.
For the choice \eqref{omega:def2} and for $s>0$,
the index set \eqref{inf:index:set}, i.e.,  
$\Indx_0^s = \{\bnu \in \cF : \omega_{\bnu}^2 \leq s/2\}$, 
can be written as
\begin{equation}\label{def:Indxs}
\Indx_0^s = \{ \bnu \in \cF : \prod_{j \in \supp \bnu} \theta^2 v_j^{2\nu_j} \leq s/2 \}.
\end{equation}
This set is always finite if the 
weight sequence satisfies $v_j>1$ and monotonically grows to infinity as $j \to \infty$.

Let us now formulate the compressive sensing Petrov-Galerkin (CSPG)
algorithm for numerically computing an approximation to a functional of the solution of 
an affine-parametric operator equation of the form \eqref{eq:main}.

\medskip

\noindent
\fbox{
\begin{minipage}[ht]{0.96\textwidth}
\medskip
\begin{center}
\textbf{
Algorithm for the Approximation of a functional $G(u(\bsy))$ 
of the solution of a parametric equation via compressive sensing} \vspace{-2mm}\\
\rule{0.96\textwidth}{.8pt}
\end{center} 

{\bf Input:}
\begin{itemize}\itemsep0pt
\item Weights $(v_j)_{j \geq1}$ with $v_j \geq 1$ satisfying \eqref{eq:Bjsmall:weight} and \eqref{v:lp} for some $0 < p < 1$.
\item Accuracy $\varepsilon$ and sparsity parameter $s$ 
\item Index set $\Indx_0^s = \{ \bnu \in \cF : 2^{\|\bnu\|_0} \prod_{j \in \supp \bnu} v_j^{2\nu_j} \leq s/2 \}$
such that $N := \# \Indx_0^s < \infty$ 
\item Number of samples $m \asymp s \ln^3(s) \ln(N)$.
\end{itemize}
{\bf Algorithm}
\begin{enumerate}[1:]
\item Choose samples $\bsy_1,\hdots,\bsy_m \in U$ independently at random according to the Chebyshev product measure
\eqref{prod:Cheb:meas}. 
\item 
For given $\eps>0$, choose $h=h(\eps)>0$ and truncation level $B=B(\eps)\in \N$ 
such that the dimension-truncated Galerkin approximations 
$u_\eps(\bsy_\ell) := u^{h(\eps)}_{B(\eps)}(\bsy_\ell)
\in \bcX_h$ 
defined in \eqref{eq:uhs} to the solution samples $u(\bsy_\ell) \in \bcX$ 
of the parametric operator equation $A(\bsy_\ell) u(\bsy_\ell) = f$ 
admit the following bound of the approximation error at $\bsy_\ell$ for
$b_\ell := G(u_\eps(\bsy_\ell))$:
\begin{equation}\label{approx:sample}
|b_\ell - G(u(\bsy_\ell))| 
= | G(u(\bsy_\ell) - u_\eps(\bsy_\ell)) | 
\leq \eps.
\end{equation}
\item 
With the sampling matrix
\begin{equation}\label{sampling:matrix:Cheb}
\A_{\ell, \bnu} = T_{\bnu}(\bsy_\ell), \quad \ell =1, \hdots, m, \quad \bnu \in \Indx_0^s,
\end{equation}
and the weights 
$\omega_{\bnu} = 2^{\| \bnu \|_0/2} \prod_{j \in \supp \bnu} v_j^{\nu_j}$,
compute the solution $\mathbf{g}^\sharp \in \R^{\Indx_0^s}$ of the weighted 
$\ell_1$-minimization program
\begin{equation}\label{l1:minh}
\min \| \mathbf{g} \|_{\omega,1} \quad \mbox{ subject to } \| \bsA \mathbf{g} - {\mathbf{b}} \|_2 \leq 2 \sqrt{m} \varepsilon. 
\end{equation}
Here, $\| \mathbf{g} \|_{\omega,1} = \sum_{\bnu \in \Indx_0^{s}} \omega_\bnu |g_{\bnu}| $.
\item 
Output approximation $\widehat{F}_\eps(\bsy)$ to $F(\bsy) = G(u(\bsy))$:
\begin{equation}\label{F:sol}
\widehat{F}_\eps(\bsy) = \sum_{\bnu \in \Indx_0^s} g_{\bnu}^\sharp T_{\bnu}(\bsy).
\end{equation}
\end{enumerate}
\end{minipage}
}

\medskip

If the sparsity and accuracy parameters are set accordingly, 
we obtain the following error estimates. 
\begin{thm}\label{thm:main:func} 
Consider $u(\bsy)$, the parametric solution to the affine 
parametric equation 
$A(\bsy) u(\bsy) = f$ with $A(\bsy) = A_0 + \sum_{j\geq 1} y_j A_j$,
satisfying Assumption \ref{ass:AssBj} and \ref{ass:XtYt} with some $p=p_0 \in (0,1)$.
Let $v = (v_j)_{j \geq 1}$ be a weight satisfying  
\eqref{eq:Bjsmall:weight} and \eqref{v:lp} and let $G : \bcX \to \R$ be a bounded linear functional.
Then the expansion coefficients $\mathbf{g}=(g_{\bnu})_{\bnu \in \cF}$ of 
$F(\bsy) = G(u(\bsy)) = \sum_{\bnu \in \cF} g_{\bnu} T_{\bnu}(\bsy)$
satisfy $\mathbf{g} \in \ell_{\omega,p}(\cF)$ with weight
\[
\omega_{\bnu} = 2^{\| \bnu \|_0 / 2} \prod_{j \in \supp \bnu} v_j^{\nu_j}, \quad \bnu \in \cF.
\]
Let $\eps > 0$ be an accuracy parameter and 
assume that the sparsity parameter $s$ satisfies the condition
\be\label{eq:Condseps}
\sqrt{5}\cdot 4^{1-1/p} s^{1/2-1/p} \triple g_{\bnu} \triple_{\omega,p} 
\leq \varepsilon 
\leq C_2 s^{1/2-1/p} \triple g_{\bnu} \triple_{\omega,p}
\;,
\ee
where $C_2 > \sqrt{5}\cdot 4^{1-1/p}$ is a constant that is independent of $s$.
Let further 
$\Indx_0^s := \{ \bnu \in \cF : 2^{\|\bnu\|_0} \prod_{j \in \supp \bnu} v_j^{2\nu_j} \leq s/2 \}$ 
be such that $N := \# \Indx_0^s < \infty$. 
Draw $m$ sampling points independently, identically distributed 
according to the product measure $\eta$, 
where
\begin{equation}\label{m:samples}
m \asymp C s \log^3(s) \log(N),
\end{equation}
for a universal constant $C>0$.
Let $\widehat{F}_{\eps} : U \to \R$ 
denote the function computed via \eqref{l1:minh} and \eqref{F:sol}. 
Then 
there exists a universal constant $C'>0$ such that
with probability at least $1-2N^{-\log^3(s)}$, 
the computed function 
$\widehat{F}_\eps % = G(u_\eps(\cdot)) 
: U \to \R$ satisfies
\begin{align}
\label{err:bound:L2}
\| \widehat{F}_{\eps}- F\|_2 
& \leq C' \triple \mathbf{g} \triple_{\omega,p} s^{1/2-1/p} 
\leq C''_{\mathbf{g}} \left(\frac{\log^3(m)\log(N)}{m}\right)^{1/p-1/2}, 
\\
\label{err:bound:Linf}
\| \widehat{F}_{\eps} - F\|_\infty 
& \leq C' \triple \mathbf{g} \triple_{\omega,p} s^{1-1/p} 
\leq C''_{\mathbf{g}} \left(\frac{\log^3(m)\log(N)}{m}\right)^{1/p-1}.
\end{align}
The constant $C'$ only depends on $C_2$, while $C''_{\mathbf{g}}$ depends on $C'$ and $\|\mathbf{g}\|_{\omega,p}$.
\end{thm}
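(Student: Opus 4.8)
The plan is to combine the weighted $\ell_p$-summability of the \Tscheb coefficients from Theorem~\ref{thm:weightlp}, the weighted restricted isometry property of the random sampling matrix from Theorem~\ref{thm:weightRIP}, and the weighted recovery guarantee of Theorem~\ref{thm:CS}, and then to convert the resulting $s$-dependent bounds into the $m$-dependent rates. First I would record the summability claim: since $|g_\bnu| = |G(d_\bnu)| \le \|G\|\,\|d_\bnu\|_\bcX$ and the choice $\theta=\sqrt2$ gives $\omega_\bnu = 2^{\|\bnu\|_0/2}\prod_j v_j^{\nu_j} \ge \|T_\bnu\|_\infty$, Theorem~\ref{thm:weightlp} applied under \eqref{eq:Bjsmall:weight}--\eqref{v:lp} yields $\sum_\bnu |g_\bnu|^p \omega_\bnu^{2-p} \le \|G\|^p \sum_\bnu \|d_\bnu\|_\bcX^p \omega_\bnu^{2-p} < \infty$, i.e.\ $\mathbf{g} \in \ell_{\omega,p}(\cF)$, and this $\omega$ meets the admissibility condition \eqref{weight:phi}.

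Next I would set up the measurement model. Writing $F = \sum_{\bnu \in \Indx_0^s} g_\bnu T_\bnu + r$ with tail $r = \sum_{\bnu \notin \Indx_0^s} g_\bnu T_\bnu$, the samples satisfy $\mathbf{b} = \bsA\,\mathbf{g}_{\Indx_0^s} + \mathbf{r} + \mathbf{e}_G$, where $\mathbf{r} = (r(\bsy_\ell))_\ell$ and $\mathbf{e}_G$ collects the Galerkin/truncation errors, with $\|\mathbf{e}_G\|_\infty \le \eps$ by \eqref{approx:sample}. On $\Indx_0^s = \{\bnu : \omega_\bnu^2 \le s/2\}$ one has $\max_{\bnu \in \Indx_0^s}\omega_\bnu^2 \le s/2$, hence $s \ge 2\|\omega\|_\infty^2$, and $\|T_\bnu\|_\infty \le \omega_\bnu$; so Theorem~\ref{thm:weightRIP} applies to $\tfrac1{\sqrt m}\bsA$, and the choice $m \asymp s \log^3(s)\log(N)$ forces $\delta_{\omega,s} \le 1/3$ with probability at least $1 - N^{-\log^3(s)}$ (take $\log(\varepsilon^{-1}) \asymp \log^3(s)\log(N)$ in \eqref{eq:mBd}).

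The main obstacle is controlling the \emph{empirical} $L_2$-norm of the tail, $\tfrac1{\sqrt m}\|\mathbf{r}\|_2 = (\tfrac1m\sum_\ell |r(\bsy_\ell)|^2)^{1/2}$, which enters the noise level of \eqref{l1:minh} but is a random quantity depending on the infinitely many inactive coefficients. Here I would follow the arguments of \cite[Theorems 6.1 and 6.4]{rawa13}: since $\mathbf{g} \in \ell_{\omega,1}$ one has $r \in L_\infty$ with $\|r\|_\infty \le \triple \mathbf{g}_{\cF \setminus \Indx_0^s}\triple_{\omega,1}$, so a concentration estimate for $\tfrac1m\sum_\ell|r(\bsy_\ell)|^2$ about its mean $\|r\|_{2}^2$ yields, on an event of probability at least $1 - N^{-\log^3(s)}$, a bound $\tfrac1{\sqrt m}\|\mathbf{r}\|_2 \le C\|r\|_{2}$. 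The deterministic tail estimates come from the threshold $\omega_\bnu^2 > s/2$ on $\cF \setminus \Indx_0^s$ together with the pointwise inequality $|g_\bnu|/\omega_\bnu \le \triple\mathbf{g}\triple_{\omega,p}\,\omega_\bnu^{-2/p}$: splitting $|g_\bnu|^2 = (|g_\bnu|^p\omega_\bnu^{2-p})(|g_\bnu|/\omega_\bnu)^{2-p}$ and $|g_\bnu|\omega_\bnu = (|g_\bnu|^p\omega_\bnu^{2-p})(|g_\bnu|/\omega_\bnu)^{1-p}$ gives
\[
\|r\|_{2} \le 2^{1/p-1/2}\,s^{1/2-1/p}\triple\mathbf{g}\triple_{\omega,p}, \qquad \triple\mathbf{g}_{\cF\setminus\Indx_0^s}\triple_{\omega,1} \le 2^{1/p-1}\,s^{1-1/p}\triple\mathbf{g}\triple_{\omega,p}.
\]
The lower bound on $\eps$ in \eqref{eq:Condseps} is calibrated precisely so that $\tfrac1{\sqrt m}\|\mathbf{r}\|_2 \le \eps$, whence the true vector $\mathbf{g}_{\Indx_0^s}$ is feasible for \eqref{l1:minh} with normalized noise level $\tau = 2\eps$, since $\|\tfrac1{\sqrt m}\mathbf{b} - \tfrac1{\sqrt m}\bsA\mathbf{g}_{\Indx_0^s}\|_2 \le \eps + \tfrac1{\sqrt m}\|\mathbf{r}\|_2 \le 2\eps$.

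Finally I would assemble the bounds. Applying Theorem~\ref{thm:CS} to $\mathbf{g}_{\Indx_0^s}$ with $\tau = 2\eps$ and bounding $\sigma_s(\mathbf{g}_{\Indx_0^s})_{\omega,1} \le C_{p,1}\, s^{1-1/p}\triple\mathbf{g}\triple_{\omega,p}$ via the weighted Stechkin estimate \eqref{weighted:Stechkin} gives $\|\mathbf{g}_{\Indx_0^s} - \mathbf{g}^\sharp\|_2 \le C\, s^{1/2-1/p}\triple\mathbf{g}\triple_{\omega,p}$ and $\triple\mathbf{g}_{\Indx_0^s} - \mathbf{g}^\sharp\triple_{\omega,1} \le C\, s^{1-1/p}\triple\mathbf{g}\triple_{\omega,p}$, where the $d_2\tau$ and $d_1\sqrt{s}\tau$ terms are absorbed at the right order using the \emph{upper} bound on $\eps$ in \eqref{eq:Condseps}. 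For \eqref{err:bound:L2}, orthonormality of the $T_\bnu$ (Parseval) gives $\|F - \widehat{F}_\eps\|_2^2 = \|\mathbf{g}_{\Indx_0^s} - \mathbf{g}^\sharp\|_2^2 + \|r\|_2^2$, and both summands are $\le C\, s^{1/2-1/p}\triple\mathbf{g}\triple_{\omega,p}$. For \eqref{err:bound:Linf} I use $\|T_\bnu\|_\infty \le \omega_\bnu$ to get $\|F - \widehat{F}_\eps\|_\infty \le \triple\mathbf{g}_{\Indx_0^s} - \mathbf{g}^\sharp\triple_{\omega,1} + \triple\mathbf{g}_{\cF\setminus\Indx_0^s}\triple_{\omega,1} \le C\, s^{1-1/p}\triple\mathbf{g}\triple_{\omega,p}$. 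These are the first inequalities in \eqref{err:bound:L2} and \eqref{err:bound:Linf}; the second inequalities follow by inverting $m \asymp s\log^3(s)\log(N)$ (using $\log(s) \asymp \log(m)$) to obtain $s \ge c\, m/(\log^3(m)\log(N))$ and substituting, since $1/2 - 1/p < 0$. A union bound over the failure of the RIP event and of the tail-concentration event yields the overall success probability $1 - 2N^{-\log^3(s)}$.
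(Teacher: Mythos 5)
Your proposal is correct and follows essentially the same route as the paper's own proof: weighted summability from Theorem~\ref{thm:weightlp}, the split of $F$ into its part on $\Indx_0^s$ plus a tail treated as sampling noise, the Bernstein-type concentration from \cite[Theorem~6.1]{rawa13} together with the threshold $\omega_\bnu^2 > s/2$ to show the lower bound in \eqref{eq:Condseps} makes $\mathbf{g}_{\Indx_0^s}$ feasible for \eqref{l1:minh} with noise level $2\eps$, the weighted RIP (Theorem~\ref{thm:weightRIP}) plus Theorem~\ref{thm:CS} with the upper bound in \eqref{eq:Condseps} absorbing the noise terms, and the same $L_2$/$L_\infty$ assembly and union bound. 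Your only deviations are cosmetic: you derive the tail estimates by direct H{\"o}lder-type splitting rather than via the Stechkin bound combined with the support-containment observation, and you use Parseval's identity in place of the triangle inequality for the $L_2$ error.
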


\begin{remark} 
\begin{itemize}
\item[(a)] 
The dimension-truncated Petrov-Galerkin approximation is computed 
using the $B$-term
truncated expansion
$A(\bsy_\ell)_B := A_0 + \sum_{j =1}^B y_j A_j$. 
Moreover, the entries in the matrix $\bsA$ only require to evaluate 
$T_{\bnu}(y_\ell) = \prod_{j \in \supp \bnu} T_j((\bsy_\ell)_j)$ for $\bnu \in \Indx_0^s$, 
so that only components $(\bsy_\ell)_j$ with $j \in \{1,\hdots,B'\}$ for
$B'= \max \{ j: \exists \mu \in \Indx_0^s \mbox { with } \mu_j \neq 0\}$ are needed.
This means that in practice it is enough to sample independently from the
finite product measure
\[  %\label{cheb:meas:finite}
\eta^{\bar{B}} := \bigotimes_{j=1}^{\bar{B}} \frac{d y_j}{\pi \sqrt{1-y_j^2}}
\]
with $\bar{B} := \max\{B,B'\}$.
\item[(b)] 
The required error estimate $|G(u^h_J) - G(u(\bsy_\ell))| \leq \eps$ 
can be guaranteed via Proposition~\ref{prop:FEconvrate} and Theorem~\ref{thm:trunc} 
by choosing the parameters $h(\eps) > 0$ and $B(\eps)\in \N$
in the Petrov-Galerkin approximation in terms of the parameters
$t,t',p_0$ in Assumptions~\ref{ass:AssBj}, \ref{ass:XtYt} as
$$
h(\eps) \simeq \eps^{1/(t+t')}\;, \qquad B(\eps) \geq \eps^{-p_0/(1-p_0)}.
$$
Note that this demands for higher regularity assumptions 
than \eqref{eq:Bjsmall:weight}. 
If $A(\bsy)$ operates nicely also on $\bcX_t$ for some $t > 0$ 
in the sense that \eqref{eq:Regul} holds, 
then $|G(u^h(\bsy)) - G(u(\bsy))| \leq C'_t h^t$. 
For instance, \eqref{eq:Regul} is implied by \eqref{eq:betatsmall} 
which is similar to \eqref{eq:Bjsmall:weight} but works with the operator norms
on $\bcX_t$ rather than the one on $\bcX$. 
\item[(c)] 
Instead of the Petrov-Galerkin method, any other 
stable and consistent discretization scheme
for numerically approximating the functional of 
the parametric solution of $A(\bsy) u = f$ 
for fixed parameter $\bsy$
can be applied in step 2 as long as the accuracy is good enough.
We have proposed the Petrov-Galerkin method here to cover all standard, as well as
certain conforming mixed finite element discretizations, for elliptic as well as space-time
discretizations of parabolic problems.
\item[(d)] 
The computations in step 2 are easily parallelized:
the sampling points $\bsy_\ell$, $\ell,\hdots,m$, 
are drawn in a preprocessing step,
and the approximate evaluations 
$G(u(\bsy_\ell))$, $\ell=1,\hdots,m$, 
are mutually independent.
Only step 3 requires the combination of the samples. 
\item[(e)] 
For the solution of the weighted $\ell_1$-minimization problem in step 3, 
various algorithms may be used, see for instance \cite[Chapter 15]{fora13} 
or \cite{pabo13} for overviews.
Preferably, one uses a method that operates only by multiplications
with $A$ and its transpose $A^*$ and does not require more complicated operations 
such as solving linear systems. 
In this case, one can exploit fast (approximate) matrix vector multiplication 
routines that are available for Chebyshev-structured matrices \cite{dadeja97,dokupo10,postta01,po03-4}. 
Extending these routines to high-dimensional problems with favourable scaling
is, however, not straightforward.
\item[(f)] 
In order to implement the algorithm as stated, an a-priori 
estimate of $\triple h_{\bnu} \triple_{\omega,p}$ is required. 
Such an estimate may in principle be computed. 
In fact, tracing the proof of Theorem~\ref{thm:weightlp} and of the auxiliary results 
required therein, may provide an estimate in terms of the weighted 
$\ell_p$-norm of $(\| \psi_j \|_\infty)_{j \geq 1}$. 
However, this bound may be very crude. 
In practice, one may as well work rather with the equality constraint 
$\bsA \mathbf{g} = \mathbf{b}$ in \eqref{l1:minh} 
with the sampling matrix $\bsA$ defined in \eqref{sampling:matrix:Cheb}.
Although there are no rigorous bounds
available for this strategy, there are theoretical 
results \cite[Chapter 11]{fora13}, \cite{wo10-2} 
suggesting that similar estimates should be possible. 
\item[(g)] 
Alternatively to weighted $\ell_1$-minimization, 
one may use a variant of iterative hard thresholding as recovery method \cite{jo13,fekrra14}.
This may have the advantage that no a-priori estimate of the accuracy parameter 
$\varepsilon$ as in \eqref{eq:Condseps} is required.
Only the weighted sparsity parameter $s$ needs to be chosen. 
If the sample solutions $G(u(\bsy_\ell))$
are computed with some accuracy $\varepsilon$, 
then the final error estimates take the form
\begin{align*}
\| F - \hat{F}_\eps \|_{L_2(U,\eta)} & \leq C_1 s^{1/2-1/p} + C_2 \eps,
\\
\| F - \hat{F}_\eps \|_{L_\infty(U)} & \leq C_1 s^{1-1/p} + C_2 \sqrt{s} \eps.
\end{align*}
If the number $m$ of samples is given then $s$ should be chosen such that $m \sim s \log^3(s) \log(N)$. 
Alternatively, if a prescribed accuracy level is given, Stechkin's estimate \eqref{weighted:Stechkin} 
provides a guideline for chosing $s$, which in turn determines the number $m$ of sample evaluations. 
Details will be presented elsewhere.
\end{itemize}
\end{remark}

\begin{proof}[Proof of Theorem~\ref{thm:main:func}] 
The fact that $\mathbf{g} \in \ell_{\omega,p}(\cF)$ follows from Theorem~\ref{thm:weightlp}.
With $\cF_1 := \cF \setminus \Indx_0^s$, we write
\[
F(\bsy) = G(u(\bsy)) = \sum_{\bnu \in \cF} g_{\bnu} T_{\bnu}(\bsy) = F_0(\bsy) + F_1(\bsy) 
% = \sum_{
\]
with
\[
F_0(\bsy) = \sum_{\bnu \in \Indx_0^s} g_{\bnu} T_{\bnu}(\bsy), 
\quad 
F_1(\bsy) = \sum_{\bnu \in \cF_1} g_{\bnu} T_{\bnu}(\bsy),
\]
We interpret the computed samples $b_\ell$ as perturbed samples of $F_0$. 
Then, the corresponding sample error can be bounded by
\[
|b_\ell - F_0(\bsy_\ell)| \leq |b_\ell - F(\bsy_\ell)| + |F_1(\bsy_\ell)| 
\leq \varepsilon + |F_1(\bsy_\ell)|, 
\;.
\]
where we used \eqref{approx:sample}. 
It is shown via Bernstein's inequality in the proof of \cite[Theorem~6.1]{rawa13} that 
\[
\left| \frac{1}{m} \sum_{\ell=1}^m |F_1(\bsy_\ell)|^2 - \sum_{ \bnu \in \cF_1} |g_\bnu|^2\right| \leq \frac{3}{s} \sum_{\bnu \in \cF_1} |g_{\bnu}| \omega_{\bnu}
\] 
with probability at least $1-e^{-3m/(2s)}$. 
Here, also the definition of $\Indx_0^s$ is used. 
Furthermore, $\omega_{\bnu}^2 \geq s/2$ for $\bnu \notin \Indx_0^s$ implies that
\[
\left(\sum_{\bnu \in \cF_1} |g_\bnu|^2\right)^{1/2} \leq \sqrt{\frac{2}{s}}  \sum_{\bnu \in \cF_1} |g_{\bnu}| \omega_{\bnu},
\]
see \cite[proof of Theorem~6.1]{rawa13}. 
Altogether, with probability at least 
$1- \exp(-3m/(2s)) \geq 1- N^{-\log^3(s)}$ (by \eqref{m:samples})
\[
\left( \frac{1}{m} \sum_{\ell=1}^m |F_1(\bsy_\ell)|^2 \right)^{1/2} 
\leq 
\sqrt{\frac{5}{s}} \sum_{\bnu \in \cF_1} |g_{\bnu}| \omega_{\bnu} 
\leq \sqrt{\frac{5}{s}} \sigma_{s/2}(\mathbf{g})_{\omega,1}
\]
since the support of the weighted best $(s/2)$-sparse approximation to $\mathbf{g}$ 
is contained in $\Indx_0^s$ because no index $\bnu$ with $\omega_\bnu^2 \leq s/2$ is contained outside $\Indx_0^s$.
Furthermore, the weighted Stechkin estimate \eqref{weighted:Stechkin} 
together with $\mathbf{g} \in \ell_{\omega,p}$ implies
\[
\sigma_{s/2}(\mathbf{g})_{\omega,1} 
\leq 
2^{1/p-1} (s/2)^{1-1/p} \triple \mathbf{g} \triple_{\omega,p} 
= 
4^{1/p-1} \triple \mathbf{g} \triple_{\omega,p}
\]
so that
\[
\left( \frac{1}{m} \sum_{\ell=1}^m |F_1(\bsy_\ell)|^2 \right)^{1/2}  
\leq 
\sqrt{\frac{5}{s}} \sigma_{s}(\mathbf{g})_{\omega,1} 
\leq 
\sqrt{5}\cdot 4^{1/p-1} s^{1/2-1/p} \triple \mathbf{g} \triple_{\omega,p} 
\leq \varepsilon
\]
with probability at least $1-N^{-\log^3(s)}$ by Assumption~\eqref{eq:Condseps}. 
This implies that on this event
\[
\left( \frac{1}{m} \sum_{\ell=1}^m |b_\ell - F_0(\bsy_\ell)|^2 \right)^{1/2} \leq 2 \eps.
\]
It follows from Theorem~\ref{thm:weightRIP} that the weighted restricted isometry property of order $s$ of the sampling matrix $\bsA \in \R^{m \times N}$ in \eqref{sampling:matrix:Cheb} holds with probability 
at least $1-N^{-\log^3(s)}$ (precisely, $\delta_{\omega,2s} < 1/3$, say) since $m \geq C s \log^3(s) \log(N)$ by assumption. Then Theorem~\ref{thm:CS} implies that the reconstructed vector $\mathbf{g}^\sharp$ supported on $\Indx_0^s$
satisfies
\begin{align*}
\left(\sum_{\bnu \in \Indx_0^s} |g_\bnu - g^\sharp_{\bnu}|^p\right)^{1/2} & \leq C \frac{\sigma_s(\mathbf{g})_{\omega,1}}{\sqrt{s}} + C' \eps, \notag\\
\sum_{\bnu \in \Indx_0^s} |g_\bnu - g^\sharp_{\bnu}| \omega_{\bnu} & \leq C \sigma_s(\mathbf{g})_{\omega,1} + C' \sqrt{s}\, \eps. \notag
\end{align*}
Since the $T_{\bnu}$ form an orthonormal system we obtain for the $L^2$-error 
\begin{align*}
\| F - \widehat{F}_\eps \|_{L^2(U,\eta)} & \leq \| F_0 - \widehat{F}_\eps\|_{L^2(U,\eta)} + \|F_1\|_{L^2(U,\eta)} 
= \left(\sum_{\bnu \in \Indx_0^s} |g_\bnu - g^\sharp_{\bnu}|^2\right)^{1/2} +  \left(\sum_{\bnu \in \cF_1} |g_{\bnu}|^2 \right)^{1/2} \notag\\
&  \leq C \frac{\sigma_s(\mathbf{g})_{\omega,1}}{\sqrt{s}} + C' \eps + \sqrt{\frac{5}{s}} \sigma_{s/2}(\mathbf{g})_{\omega,1} \leq 
(C + \sqrt{5}) 2^{1/p-1} s^{-1/2} s^{1-1/p} \triple \mathbf{\mathbf{g}} \triple_{\omega,p} + C' \eps\\
&\leq C'' s^{1/2-1/p}  \triple \mathbf{g} \triple_{\omega,p}.
\end{align*}
Here, we have applied the weighted Stechkin estimate \eqref{weighted:Stechkin} 
as well as the upper bound of \eqref{eq:Condseps}.
Finally, since $\|T_{\bnu}\|_\infty \leq 2^{\|\bnu\|_0/2} \leq \omega_j$, the $L^\infty$-error 
can be bounded as follows:
\begin{align*}
\| F - \widehat{F}_\eps \|_{\infty} & \leq \| F_0 - \widehat{F}_\eps\|_{\infty} + \|F_1\|_{\infty} 
= \| \sum_{\bnu \in  \Indx_0^s} (g_{\bnu} - g_{\bnu}^\sharp) T_{\bnu} \|_{\infty} + \|\sum_{\bnu \in \cF_1} g_{\bnu} T_{\bnu} \|_{\infty}\\
&\leq \sum_{\bnu \in \Indx_0^s} |g_{\bnu} - g_{\bnu}^\sharp| \|T_{\bnu}\|_\infty + \sum_{\bnu \in \cF_1} |g_{\bnu}| \|T_{\bnu}\|_\infty
\leq \sum_{\bnu \in \Indx_0^s} |g_{\bnu} - g_{\bnu}^\sharp| \omega_{\bnu} + \sum_{\bnu \in \cF_1} |g_{\bnu}| \omega_{\bnu}\\
&\leq C \sigma_s(\mathbf{g})_{\omega,1} + C' \sqrt{s}\, \eps + \sigma_{s/2}(\mathbf{g})_{\omega,1}
\leq C'' s^{1-1/p} \triple g \triple_{\omega,p}.
\end{align*}
For the bound  $\sum_{\bnu \in \cF_1} |g_{\bnu}| \omega_{\bnu} \leq \sigma_{s/2}(g)_{\omega,1}$ we have used again that by the definition of $\Indx_0^s$ the support of the weighted
best $s/2$-term approximation is contained in $\Indx_0^s$. 
The second inequalities in \eqref{err:bound:L2} and \eqref{err:bound:Linf} follow from $m \asymp s \log^3(s) \log(N)$.
This completes the proof.
\end{proof}

In order make the estimate $m \geq C s \log^3(s) \log(N)$ on the number of sample evaluations 
precise, we also need a bound on  $N = \#\Indx_0^s$ (depending on $s$).
The size of $\Indx_0^s$ will also determine the complexity of the weighted $\ell_1$-minimization step.
If the weight sequence $v_j$ grows to infinity as $j \to \infty$, 
then the number of active indices $j$ which are relevant for $\Indx_0^s$, i.e., 
the number 
$d := \max\{j: \exists \bnu \in \Indx_0^s \mbox{ with } \nu_j \neq 0 \}$ is finite. 
In fact, if the sequence $v_j$ is monotonically increasing, then
$d = \max\{j : v_j \leq \sqrt{s/4}\}$. 
Therefore, setting $t = s/2$, it suffices to bound the size of
\[
\Indx_d(t,\mathbf{v}) := \{ \bnu \in \N_0^d : 2^{\|\bnu\|_0} \prod_{j \in \supp \bnu} v_j^{2\nu_j} \leq t \}
\;.
\]
\begin{thm}\label{thm:indx:size} 
For a finite weight sequence $\mathbf{v} = (v_1,\hdots,v_d)$ with $v_j > 1$, 
$a_j := 2 \log_2(v_j)$ and $A := \log_2(t)$, we have
\begin{equation}\label{Indx:Size:est}
\#\Indx_d(t,\mathbf{v}) \leq 1 + \sum_{k=1}^{\min\{d, \lfloor A \rfloor\}} \frac{(A-k)^k}{k!} \sum_{\substack{S \subset [d]\\ \#S=k\\ \sum_{j \in S} a_j \leq A-k}}  \prod_{\ell \in S} a_\ell^{-1}. 
\end{equation}
\end{thm}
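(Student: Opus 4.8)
The plan is to linearize the multiplicative defining constraint by taking base-$2$ logarithms. With $a_j = 2\log_2(v_j) > 0$ and $A = \log_2(t)$, the condition $2^{\|\bnu\|_0}\prod_{j \in \supp\bnu} v_j^{2\nu_j} \le t$ becomes, upon applying $\log_2$,
\[
\|\bnu\|_0 + \sum_{j \in \supp \bnu} a_j \nu_j \le A,
\]
so that $\Indx_d(t,\mathbf{v})$ is precisely the set of $\bnu \in \N_0^d$ satisfying this additive inequality. First I would isolate the trivial index $\bnu = \mathbf{0}$, which always belongs to the set and accounts for the leading $1$ in \eqref{Indx:Size:est}.

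Next I would stratify the nonzero indices by their support. Fix $k = \|\bnu\|_0 \ge 1$ and $S = \supp\bnu \subset [d]$ with $\#S = k$. Since $\sum_{j \in S} a_j \nu_j \ge 0$, the constraint forces $k \le A$, whence $1 \le k \le \min\{d,\lfloor A\rfloor\}$; this fixes the outer summation range. For fixed $S$ every admissible $\bnu$ has $\nu_j \ge 1$ for $j \in S$ and $\nu_j = 0$ otherwise, and the constraint reads $\sum_{j \in S} a_j \nu_j \le A - k$. Because each $\nu_j \ge 1$, a necessary condition for $S$ to support any admissible index is $\sum_{j \in S} a_j \le A - k$, which is exactly the restriction on $S$ in \eqref{Indx:Size:est}.

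The main step is to count, for fixed admissible $S$, the indices supported on $S$. Substituting $\mu_j = \nu_j - 1 \ge 0$ turns the constraint into $\sum_{j \in S} a_j \mu_j \le B$ with $B := A - k - \sum_{j \in S} a_j \ge 0$, so the count equals the number of lattice points $\bmu \in \N_0^S$ in this weighted simplex. I would bound this by a volume comparison: to each such $\bmu$ associate the unit cube $\prod_{j \in S}[\mu_j,\mu_j+1)$. These cubes are pairwise disjoint with volume $1$, and for any $x$ in one of them one has $x \ge 0$ and $\sum_{j\in S} a_j x_j < \sum_{j\in S} a_j(\mu_j+1) \le B + \sum_{j \in S} a_j = A-k$. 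Hence their disjoint union lies in $\{x \in \R_{\ge 0}^S : \sum_{j \in S} a_j x_j \le A-k\}$, whose volume is $\tfrac{(A-k)^k}{k!\,\prod_{j\in S} a_j}$ by the change of variables $y_j = a_j x_j$ reducing to the standard simplex of volume $(A-k)^k/k!$. Thus the number of indices supported on $S$ is at most $\tfrac{(A-k)^k}{k!}\prod_{\ell \in S} a_\ell^{-1}$.

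Finally I would assemble the estimate: summing this bound over all admissible $S$ with $\#S = k$, pulling out the $S$-independent factor $(A-k)^k/k!$, summing over $1 \le k \le \min\{d,\lfloor A\rfloor\}$, and adding the $\bnu = \mathbf{0}$ term yields exactly \eqref{Indx:Size:est}. The only genuinely non-routine part is the lattice-point-to-volume comparison in the main step; the remainder is bookkeeping of the logarithmic constraint and its stratification by support.
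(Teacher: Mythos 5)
Your proposal is correct, and its overall skeleton coincides with the paper's: the same base-$2$ logarithmic reduction to the additive constraint $\sum_{j\in\supp\bnu}(1+a_j\nu_j)\le A$, the same isolation of $\bnu=\mathbf{0}$, the same stratification by support size $k$ and support set $S$, and the same observation that a support $S$ can only occur when $\sum_{j\in S}a_j\le A-k$. Where you genuinely diverge is in the core counting lemma, i.e.\ the bound
\[
\#\Bigl\{\bnu\in\N^{S}:\ \sum_{j\in S}a_j\nu_j\le A-k\Bigr\}\ \le\ \frac{(A-k)^k}{k!\,\prod_{j\in S}a_j}.
\]
The paper proves this (as the bound \eqref{Gamma:size} on $\Gamma_k$) by induction on the dimension $k$, peeling off one variable at a time and comparing the resulting sum with an integral. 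You instead prove it in one geometric step: shift to $\mu_j=\nu_j-1\ge 0$, attach to each lattice point the half-open unit cube $\prod_{j\in S}[\mu_j,\mu_j+1)$, note that these cubes are disjoint and that the shift by $\sum_{j\in S}a_j$ is exactly what places their union inside the simplex $\{x\ge 0:\sum_{j\in S}a_jx_j\le A-k\}$, and then compute that simplex's volume by a linear change of variables. Both arguments are elementary and yield the identical bound; your packing argument avoids the induction entirely and makes transparent why the simplex with parameter $A-k$ (rather than $A-k-\sum_{j\in S}a_j$) appears, while the paper's inductive integral comparison is the more pedestrian but equally rigorous route. Either proof splices correctly into the rest of the argument, since the assembly over $S$ and $k$ is identical.
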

\begin{proof} Taking the base-$2$ logarithm, the defining condition of the set $\Indx_d(t,\mathbf{v})$ becomes
\[
\|\bnu\|_0 + \sum_{j \in \supp \bnu} 2 \log_2(v_j) \nu_j \leq \log_2(t),
\]
which by the definition of $a_j$ and $A$ is equivalent to
\begin{equation}\label{cond:bnu}
\sum_{j \in \supp \bnu} (1 + a_j \nu_j) \leq A.
\end{equation}
In order to estimate the number of points $\bnu$ with nonnegative integer coordinates satisfying this inequality,
we introduce, for $L >0$ and $\mathbf{b} = (b_1,\hdots,b_k)$ with $b_\ell > 0$, the cardinality 
of a set of multi-indices
for which all components
of its members $\bnu$ are at least $1$: %, i.e.,
$$\Gamma_k(L,\mathbf{b}) := \#\{\bnu \in \N^{k}: \sum_{j=1}^{k} b_j \nu_j \leq L \}.
$$
Observe that $\Gamma_k(L,\mathbf{b}) = 0$ if $\sum_{j=1}^{k} b_j > L$. Moreover, for $\bnu$ with $\|\bnu\|_0 = k$, condition
\eqref{cond:bnu} becomes $\sum_{j \in \supp \bnu} a_j \bnu_j \leq A - k$. Considering all possible support sets $S = \supp \bnu \subset [d]$ of cardinality $k$ and summing
over all possible $k$ we obtain
\begin{equation}\label{Indx:size}
\# \Indx_d(t,\mathbf{v}) = 1 + \sum_{k=1}^{\min\{d,\lfloor A \rfloor\}} \sum_{\substack{S \subset [d]\\ \#S=k
}} \Gamma_{k}(A-k,(a_j)_{j \in S}).
\end{equation}
If $k > \lfloor A \rfloor$ then $\Gamma_{k}(A-k,(a_j)_{j \in S}) = 0$, which is why the first sum 
above runs only up to $\lfloor A \rfloor$ if $\lfloor A \rfloor < d$. We claim that
\begin{equation}\label{Gamma:size}
\Gamma_k(L, \mathbf{b}) \leq \frac{L^k}{k! \prod_{j =1}^k b_j}.
\end{equation}
We show this by induction on $k$. For $k=1$, we have 
\[
\Gamma_1(L,b_1) = \#\{ \nu_1 \in \N : b_1 \nu_1 \leq L\} = \lfloor L/b_1 \rfloor \leq L/b_1.
\]
Next, assume the claim holds for $k \in \N$. 
Then
\begin{align*}
\Gamma_{k+1}(L,b_1,\hdots,b_{k},b_{k+1}) & = 
\sum_{\nu_{k+1}=1}^{\lfloor L/b_{k+1} \rfloor} \Gamma_k(L- b_{k+1} \nu_{k+1},b_1,\hdots,b_k) 
\leq \sum_{\nu_{k+1}=1}^{\lfloor L/b_{k+1} \rfloor} \frac{(L-b_{k+1} \nu_{k+1})^k}{k! \prod_{j=1}^k b_j} 
\\
& \leq  \frac{1}{k! \prod_{j=1}^k b_j}  \int_0^{L/b_{k+1}} (L-b_{k+1}x)^k dx = \frac{1}{k! \prod_{j=1}^k b_j} \frac{L^{k+1}}{(k+1)b_{k+1}} 
\\
& = \frac{L^{k+1}}{(k+1)! \prod_{j =1}^{k+1} b_j}.
\end{align*}
Here, we have applied a simple comparison of a sum and an integral. 
This shows the claimed inequality \eqref{Gamma:size}.
Recalling that $\Gamma_k(A-k,(a_j)_{j \in S}) = 0$ for $\#S = k$ if $\sum_{j \in S} a_j > A-k$, 
Equation~\eqref{Indx:size} yields the claimed estimate. 
\end{proof}
With the same technique a lower bound can be shown as well. 
We illustrate the estimate of the theorem with some important examples of weights. The proofs of these estimates will be provided in the appendix.
\begin{cor}\label{cor:size:Indx} 
Assume $s \geq 1$.
\begin{itemize}
\item[(a)] Let $v_j = \beta$, $j=1,\hdots,d$, for some $\beta > 1$ and $v_j = \infty$ for $j \geq d$ 
(so that the sum in \eqref{eq:Baffine} representing $A(\bsy)$ involves only $d$
nonzero terms). Then
\begin{equation*}
\# \Indx_0^s \leq \left\{\begin{array}{ll} \left(\log_{\beta^2}(\beta^2 s/2)\right)^d  & \mbox{ if } d \leq \log_{2\beta^2}(s/2), \\
\left(\left(1+\frac{1}{\log_2(\beta^2)}\right)ed\right)^{\log_{2\beta^2}(s/2)}
& \mbox{ otherwise}.
\end{array} \right.
\end{equation*}
\item[(b)] Let $v_j = c j^\alpha$, $j = 1,2,\hdots$, grow polynomially 
for some $c > 1$ and $\alpha > 0$. Then there are constants $C_{\alpha,c}> 0$ and $\gamma_{\alpha,c} > 0$ such that
\[
\#\Indx_0^s \leq C_{\alpha,c} s^{\gamma_{\alpha,c} \log(s)}.
\]
\item[(c)] Let $v_j = \beta^j$ for some $\beta > 1$. Then
\begin{equation}\label{size:exponential:weights}
\Indx_0^s \leq 1 +\frac{1}{2\pi\sqrt{\log_\beta(s/2)}} \left(e^3 \sqrt{\log_\beta(s/2)}\right)^{\sqrt{\log_\beta(s/2)}}.
\end{equation}
\end{itemize}
\end{cor}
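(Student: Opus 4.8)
The plan is to derive all three estimates from the explicit bound \eqref{Indx:Size:est} of Theorem~\ref{thm:indx:size}, specialised to each weight profile. Throughout I set $t = s/2$, so that $A = \log_2(s/2)$, and since each $v_j$ is monotone I take $d = \max\{j : v_j \le \sqrt{s/4}\}$ as the number of coordinates that can be active in $\Indx_0^s$; this reduces the count to $\#\Indx_d(t,\mathbf{v})$ as in the discussion preceding Theorem~\ref{thm:indx:size}. The only quantities entering \eqref{Indx:Size:est} are the profile $a_j = 2\log_2 v_j$, the budget $A$, and $d$, so the whole task is to read off $a_j$ in each case and estimate the double sum over support sizes $k$ and supports $S$.

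For part (a) all weights coincide, $a_j \equiv a := \log_2(\beta^2)$, so the inner sum over $S$ in \eqref{Indx:Size:est} collapses to $\binom{d}{k} a^{-k}$ subject to $ka \le A-k$, i.e.\ $k \le K := A/(a+1) = \log_{2\beta^2}(s/2)$. I would then split into the two stated regimes. When $d \le K$ the support size is limited only by $d$, and rather than summing I would use the cruder coordinatewise bound: from $\sum_{j\in\supp\bnu}(1+a_j\nu_j)\le A$ each active $\nu_j$ obeys $1+a\nu_j\le A$, so $\nu_j$ ranges over at most $(A+a)/a = \log_{\beta^2}(\beta^2 s/2)$ values, and the product over the $d$ coordinates gives the first bound. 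When $d > K$ the support size is capped at $K$, and I would estimate $\sum_{k\le K}\binom{d}{k} (A-k)^k/(k!\,a^k)$ using $\binom{d}{k} \le (ed/k)^k$ and $k!\ge(k/e)^k$; the term at $k=K$ dominates after the $K$ summands are absorbed, yielding $((1+1/a)ed)^{K}$.

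For part (b), $a_j = 2\log_2 c + 2\alpha\log_2 j \to \infty$, and the active range is $d = O(s^{1/(2\alpha)})$ while the support size is still bounded by $K = O(A) = O(\log s)$. Bounding $\binom{d}{k} \le d^k$ turns each summand into a quantity polynomial in $s$ of degree $O(k/\alpha)$, and summing the $O(\log s)$ admissible values of $k$ produces the quasi-polynomial bound $C_{\alpha,c}\, s^{\gamma_{\alpha,c}\log s}$; tracking the exponent through $d = O(s^{1/(2\alpha)})$ and $k \le K$ fixes $\gamma_{\alpha,c}$. For part (c), $a_j = 2j\log_2\beta$ is \emph{linear} in $j$, so the support constraint reads $\sum_{j\in S} j \le (A-k)/(2\log_2\beta) \le \log_\beta(s/2) =: P$, and the count is governed by partitions into distinct parts. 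The decisive observation is that distinct positive indices force $\sum_{j\in S} j \ge k(k+1)/2$, so the support size is capped at $k \le \sqrt{2P} = O(\sqrt{\log s})$; combining this with $d = O(\log s)$, the estimate for $\binom{d}{k}$ and the factor $\prod_{\ell\in S} a_\ell^{-1}$ then yields the Hardy--Ramanujan-type expression in \eqref{size:exponential:weights}.

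I expect the genuinely delicate step to be the summation in the large-$d$ regimes of parts (a) and (c): in both, the naive per-coordinate product bound is exponential in $d$ and hence useless, so one must exploit that the support size is small and locate the dominant term of $\sum_k \binom{d}{k}(\cdots)$ by a Stirling estimate. Part (c) is the hardest, since capping the support at $O(\sqrt{\log s})$ via the distinct-parts inequality $k(k+1)/2 \le P$ and then matching the resulting estimate to the precise constants $e^3$ and the $\sqrt{\log_\beta(s/2)}$ exponent in \eqref{size:exponential:weights} requires the partition-counting bound to be carried out sharply rather than through the crude $\binom{d}{k} \le (ed/k)^k$ alone.
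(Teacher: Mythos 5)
Your proposal is correct and follows essentially the same route as the paper's appendix proof: specialize \eqref{Indx:Size:est} of Theorem~\ref{thm:indx:size} with $a_j = 2\log_2 v_j$ and $A = \log_2(s/2)$, cap the admissible support size $k$ in each regime, and control $\sum_k \binom{d}{k}(\cdots)$ by $(ed/k)^k$-type and Stirling estimates; your local shortcuts (the per-coordinate count giving $(1+A/\log_2(\beta^2))^d$ directly in case (a) with $d$ small, and the crude $\binom{d}{k}\le d^k$ in (b), which suffices since (b) only asserts existence of $C_{\alpha,c},\gamma_{\alpha,c}$) reach the same bounds. The one point you must sharpen in (c), as you yourself anticipated, is the support-size cap: the constraint is $\sum_{j\in S} j \le (A-k)/(2\log_2\beta) \le \tfrac12\log_\beta(s/2)$, so $k(k+1)/2 \le \tfrac12\log_\beta(s/2)$ gives $k \le \sqrt{\log_\beta(s/2)}$ rather than your $\sqrt{2\log_\beta(s/2)}$, and it is exactly this sharper cap that produces the exponent $\sqrt{\log_\beta(s/2)}$ in \eqref{size:exponential:weights}.
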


\begin{remark}\label{rem:Indx:size} We briefly illustrate consequences for the number $m \geq C s \log^3(s) \log(N)$ of samples
required in Theorem~\ref{thm:main:func}.
\begin{itemize}
\item[(a)] For constant weights $v_j = \beta > 1$, $j=1,\hdots,d$, the resulting bound becomes
\[
m \geq \left\{ \begin{array}{ll} C_\beta \log(d) s \log^4(s) & \mbox{ if } s \leq (2 \beta)^{2d}\\
C_\beta d s \log^3(s) \log(\log(s))  & \mbox{ if } s > (2 \beta)^{2d}.
\end{array} \right.
\]
\item[(b)] For polynomially growing weights $v_j = c j^\alpha$ with $c > 1$ and $\alpha > 0$ we need
\[
m \geq C_{\alpha,c} s \log^5(s).
\]
Moreover, the resulting error bound \eqref{err:bound:Linf} becomes $\|F-\widehat{F}_\eps\|_\infty \leq C_{\alpha,c} \|\mathbf{g}\|_{\omega,p} (\log^5(m)/m)^{1/p-1}$, and likewise for the
error bound in $L^2$.
\item[(c)] 
For exponentially growing weights $v_j = \beta^j$ with $\beta > 1$ 
the upper bound \eqref{size:exponential:weights} 
on $\#\Indx_0^s$ grows slowlier than $s$.
This means that $m$ may actually be chosen smaller than 
$N = \#\Indx_0^s$ which leads to an overdetermined linear system. 
In this scenario,
the application of compressive sensing may not be required and it is likely good enough to 
use least-squares methods \cite{CDL2013,MiNoSchT2014} to compute coefficients $\mathbf{b}$ from sample
evaluations. We postpone a detailed discussion to a later contribution.
\end{itemize}
\end{remark}
\section{Conclusions}
\label{sec:Concl}
In the present paper, we developed a convergence theory of 
compressed sensing based approximations of solution
functionals for
high-dimensional, parametric operator equations.
Such problems arise generically in numerical uncertainty 
quantification, when laws of random field inputs are described by 
countably-parametric, deterministic functions, and the response,
i.e., the parametric solution of the 
affine-parametric operator equation \eqref{eq:Baffine}, 
is to be approximated in terms of the countably many parameters.  
As main contribution we showed that given $m$ (approximate) sample evaluations
at randomly chosen parameter points, the computed parametric solution $\widehat{F}$ 
approximates the true function $F(\bsy) = G(u(\bsy))$ at rates
\[
\|F - \hat{F}\|_{L_2} \leq C \left( \frac{\log(m)^5}{m} \right)^{1/p-1/2}, \qquad 
\|F - \hat{F}\|_{L_\infty} \leq C \left( \frac{\log(m)^5}{m} \right)^{1/p-1},
\]
under a weighted $\ell_p$-summability assumption with $0 < p < 1$ 
on the input parameters (for polynomially growing weights $v_j$, 
see Section~\ref{sec:CS} for details).
Up to the logarithmic terms, this matches the rates of best 
$s$-term approximation and for $p<1/2$ outperforms the 
(best possible) convergence rate for Monte-Carlo methods. 

As an important ingredient, potentially of independent interest, 
we generalized available $\ell_p$-estimates for polynomial
chaos expansions \cite{chalch14,codesc10-1,codesc11,Hanse486,Kuo475,ScMCQMC12} 
to the weighted case, 
which allowed to apply recent results on weighted sparsity and recovery via weighted
$\ell_1$-minimization \cite{rawa13}. 
These weighted estimates also allow to 
determine good a-priori choices for a finite subset of $\cF$ containing 
the relevant \Tscheb coefficients -- the set $\Indx_0^s$ defined in \eqref{def:Indxs}. 
The actual support set of the best 
$s$-term approximation may be much smaller than $\Indx_0^s$,
but such an initial choice is required in order to run 
weighted $\ell_1$-minimization for the reconstruction.
We note that $\Indx_0^s$ may be a good choice also for other 
approaches such as least squares methods \cite{CDL2013,MiNoSchT2014}, 
see Remark~\ref{rem:Indx:size}(c).
We developed our theory for the affine-parametric operator equations
\eqref{eq:main2}; we point out, however, that the compressive sensing
part of our analysis is independent of the particular 
parameter dependence: the proof of
$p$-summability of the \Tscheb coefficients of the parametric 
solution given in Sections \ref{sec:AnCont},
\ref{sec:BndTscheb} only required the existence of holomorphic 
extension of parametric solutions into polydiscs. Such extensions
are available for several more general classes of parametric problems:
for affine-parametric, nonlinear initial value ODEs \cite{Hanse1085},
for more general, nonlinear countably-parametric operator equations
\cite{chalch14} (including in particular also uncertain domain
parametrizations). 
We expect the presently developed methods
to apply also for these more general classes of parametric
operator equations.

Let us place the present results into perspective with 
other approximation methods for high-dimensional problems: 
adaptive stochastic Galerkin methods \cite{EGSZ14_1230,EGSZ14_1045,G13_531},
reduced basis approaches \cite{BCDDPW11,BufMadPat12} 
adaptive Smolyak discretizations \cite{SS14_1132,SS14_1117}
and adaptive interpolation methods \cite{CCS2014} 
are all \emph{sequential} in the sense that they rely on \emph{successive} 
numerical solution of the operator equation on parameter instances 
(adaptive Galerkin discretizations being intrusive on top).
This is in contrast to, say, Monte-Carlo or Quasi-Monte Carlo approaches
which likewise offer dimension-independent convergence rates for statistical
moments of the solution, and which allow to access the parametric solution
\emph{simultaneously} at a set of samples. These methods do not, however,
allow \emph{recovery} of the parametric solution 
as does the presently proposed approach. 
Moreover, the convergence rates of MC are well-known to be limited to $1/2$.

We have discussed the approximation of functionals of parametric solutions. MC methods, for instance, are
also able to compute expectations of the full solution $\E_{\bsy}[u(\bsy)]$, not only $\E_{\bsy}[G(u(\bsy))]$, i.e., 
expectations of a functional of the solution. 
Extensions of our method to the computation of parametric solutions $u(\bsy)$ that are fully resolved in the physical domain
will be the subject of future investigations. We strongly expect that we will again obtain near-optimal convergence rates 
so that the resulting method shows the same advantages with respect to MC and other methods outlined above for the
case when functionals of the solutions are computed.

In contrast to the above mentioned sequential and deterministic methods, 
our CS-based methods are similar in nature to
\emph{so-called least-squares projection methods}, recently proposed 
in \cite{CDL2013,MiNoSchT2014}, which also rely on sample evaluations at randomly chosen
parameter locations. These least squares approaches require
\emph{a-priori}  knowledge of a near-optimal set of active indices
in the gpc expansions of the parametric solution and have complexity exceeding 
the cardinality of that set. In contrast, the presently proposed CS approach requires only 
specification of a (conservative and possibly large) superset $\Indx_0^s$ 
containing the optimal $s$-term approximation set. The complexity of the CS approach is, as we showed
in the present article, \emph{sublinear} in the cardinality of this
candidate set and will pinpoint the `essential'
gpc coefficients within this set, even if the set of these coefficients
is ``gappy'' or lacunary.  Expressed differently, given a budget $m$ of samples, least squares methods
need a good guess for an index set of cardinality somewhat smaller than $m$ of relevant gpc coefficients, while
compressive sensing methods need only a very rough estimate for this index set which is allowed to be significantly
larger than $m$.

In the present paper, we considered only 
affine parametric operator equations. 
We expect that extensions to nonaffine, but still holomorphic, 
parameter dependence is possible analogously to \cite{chalch14},
which will allow the application to \emph{Bayesian inverse problems} for parametric operator equations \cite{SS13_813,SS14_1117}.
These topics will be developed in detail elsewhere.

\section{Appendix: Proof of Corollary~\ref{cor:size:Indx}}

(a) Set $a_j = 2 \log_2(v_j) = \log_2(\beta^2)$ and $A = \log_2(s/2)$. Observe that condition $\sum_{j \in S} a_j \leq A -k$ in 
the second sum in \eqref{Indx:Size:est} can be met for some $S$ with $\#S = k$ if and only if $k \leq A/(1+\log_2(\beta^2))$.
Let us first consider the case that $d \leq A/(1+\log_2(\beta^2))= A/\log_2(2\beta^2)= \log_{2\beta^2}(s/2)$. 
Then clearly $d \leq A$ and \eqref{Indx:Size:est} yields
\begin{align*}
\#\Indx_d(t,\mathbf{v}) & \leq 1 + \sum_{k=1}^{d} \frac{(A-k)^k}{k!} \sum_{\substack{S \subset [d]\\ \#S=k\\ \sum_{j \in S} a_j \leq A-k}}  \prod_{j \in S} a_j^{-1}
= 1 + \sum_{k=1}^d  \frac{(A-k)^k}{k!} \binom{d}{k} (\log_2(\beta^2))^{-k} \\
& \leq \sum_{k=0}^d  \binom{d}{k} (A/\log_2(\beta^2))^k = 
 (A/\log_2(\beta^2)+1)^d = (\log_{\beta^2}(\beta^2 s/2))^d.  
 \end{align*}
On the other hand, if $d > A/\log_2(2\beta^2)$ then we have
\begin{align*}
\#\Indx_d(t,\mathbf{v}) & \leq \sum_{k=0}^{\lfloor A/\log_2(2\beta^2) \rfloor} \frac{(A-k)^k}{k!} \binom{d}{k} (\log_2(\beta^2))^{-k}  
 \leq  \sum_{k=0}^{\lfloor A/\log_2(2\beta^2) \rfloor} \binom{d}{k} (A/\log_2(\beta^2))^{k} \\
 &\leq (A/\log_2(\beta^2))^{\lfloor A/\log_2(2\beta^2) \rfloor} \sum_{k=0}^{\lfloor A/\log_2(2\beta^2) \rfloor} \binom{d}{k}
 \leq (A/\log_2(\beta^2))^{\lfloor A/\log_2(2\beta^2) \rfloor} \left(\frac{ed}{\lfloor A/\log_2(2\beta^2) \rfloor}\right)^{\lfloor A/\log_2(2\beta^2) \rfloor}\\
 & \leq (\log_2(2\beta^2)ed/\log_2(\beta))^{A/\log_2(2\beta^2)} = \left((1+1/\log_2(\beta^2))ed\right)^{\log_2(s/2)/\log_2(2\beta^2)} \\
 &= ((1+1/\log_2(\beta^2))ed)^{\log_{2\beta^2}(s/2)}.
\end{align*}
Here, we have applied the inequality $\sum_{k=0}^n \binom{d}{k} \leq (ed/n)^n$, see e.g.\ \cite[Theorem 3.7]{anba99}, 
and used the fact that $x \mapsto (K/x)^{x}$ is monotonically
increasing for $0<x \leq K/e^{1/K}$.

(b) We first note that the largest effective component of the indices in $\Indx_0^s$ is given by
\[
d = \max\{j: \exists \bnu \in \Indx_0^s \mbox{ with } \nu_j \neq 0 \} = \max\{j : v_j \leq \sqrt{s/4}\} = \lfloor \left(s/(4c) \right)^{1/(2\alpha)}\rfloor.
\]
We set $a_j = 2\log_2(v_j) = 2 \alpha \log_2(j) + 2 \log_2(c)$, $A = \log_2(s/2)$. 
The estimate \eqref{Indx:Size:est} gives then
\begin{align*}
\#\Indx_0^s & \leq 1 
+ \sum_{k=1}^{\lfloor A \rfloor} \frac{(A-k)^k}{k!} \sum_{\substack{S \subset [d]\\ \#S=k\\ \sum_{j \in S} a_j \leq A-k}}  \prod_{j \in S} a_j^{-1} \\
& =  1 
+ \sum_{k=1}^{\lfloor A \rfloor} \frac{(A-k)^k}{k!} \sum_{\substack{S \subset [d]\\ \#S=k\\ 2 \alpha \log_2(k!)  \leq A-k(1+2 \log_2(c))}}  \prod_{j \in S} \frac{1}{2\alpha \log_2(j) + 2 \log_2(c)} \\
& \leq 1 + \sum_{\substack{k \in [\lfloor A \rfloor] \\ 2 \alpha \log_2(k!)  \leq A-k(1+2 \log_2(c))}} \frac{(A-k)^k}{k!} 2^{-k} \sum_{\substack{S \subset [d]\\ \#S=k}} (\alpha + \log_2(c))^{-(k-1)} (\log_2(c))^{-1} \\
& \leq 1 + (1+ \alpha/\log_2(c)) \sum_{\substack{k \in [\lfloor A \rfloor] \\ 2 \alpha \log_2(k!)  \leq A-k(1+2 \log_2(c))}}
\left(\frac{A}{2\alpha + 2\log_2(c)}\right)^{k} \frac{1}{k!} \binom{d}{k}.
\end{align*}
Let $K$ be the maximal number $k$ such that $2 \alpha \log_2(k!)  \leq A-k(1+2 \log_2(c))$. For $k\geq 4$ 
we have $\log_2(k!) \geq k$. (The case $K \leq 3$ only occurs for $s < D_c$, which can be handled by potentially adjusting constants.)
Therefore, $K \leq \frac{A}{1+2\alpha + 2 \log_2(c)} $. 
Moreover, Stirling's inequality gives $k! \leq \sqrt{2\pi k} e^{1/(12k)} (k/e)^k$ 
for $k\geq 4$ so that for such $k$ we have
$\log_2(k!) \leq \log_2(2 \pi e^{1/24}k)/2 + k \log_2(k/e) 
  \leq \left(\frac{\log_2(2 \pi e^{1/24}\cdot 4)}{8} + \log_2(k/e)\right) k
= \log_2(D_0 k) k$ with $D_0 = (8\pi e^{1/24})^{1/8}/e \approx 0.55$.
Therefore, $K$ also satisfies the lower bound
$K \geq \frac{A}{1+ 2\alpha+ \alpha \log_2(D_0 K)} \geq  \frac{A}{1+\alpha \log_2(D_1 A)}$ with $D_1 = 2 D_0 = 1.1067$.
Furthermore, observe that, for $k = 1,\hdots,K$, 
the sequence $b_k = \left(\frac{A}{2\alpha + 2\log_2(c)}\right)^{k} \frac{1}{k!}$ 
takes the maximum for $k = K$ so that, exploiting the Stirling inequality 
$K! \geq \sqrt{2\pi K} (K/e)^K$, 
we obtain
\begin{align*}
\#\Indx_0^s &\leq \left(1+ \frac{\alpha}{\log_2(c)}\right) \left(\frac{A}{2\alpha + 2\log_2(c)}\right)^{K} \frac{1}{K!} \sum_{k=0}^K \binom{d}{k} 
\leq  \left(1+ \frac{\alpha}{\log_2(c)}\right)  \left(\frac{A}{2\alpha + 2\log_2(c)}\right)^{K} \frac{1}{\sqrt{2\pi K} (K/e)^K} \left(ed/K\right)^K\\
& \leq  \left(1+ \frac{\alpha}{\log_2(c)}\right)  \frac{1}{\sqrt{2\pi K}}   \left(\frac{e^2 d A}{K^2(2\alpha + 2\log_2(c))}\right)^{K}\\
& \leq  \left(1+ \frac{\alpha}{\log_2(c)}\right)  \sqrt{\frac{1+\alpha \log_2(D_1 A)}{2 \pi A}}\left( \frac{e^2 d}{A} \cdot \frac{1+\alpha\log_2(D_1 A)}{2\log_2(2^\alpha c)}\right)^{A/(1+\log_2(2^{\alpha} c))}.
\end{align*}
Inserting here $A= \log_2(s/2)$ 
yields
\[
\#\Indx_0^s \leq  \left(1+ \frac{\alpha}{\log_2(c)}\right)  \sqrt{\frac{1+\alpha \log_2(D_1 \log_2(s/2))}{2 \pi \log_2(s/2)}} 
\left( \frac{e^2\alpha d}{2 \log_2(s/2)} \cdot \frac{\log_2(2^{1/\alpha}D_1 A)}{\log_2(2^\alpha c)}\right)^{\log_2(s/2)/\log_2(2^{\alpha+1} c)}.
\]
Further using that $d \leq (s/4c)^{1/\alpha}$ yields
\begin{align*}
\#\Indx_0^s & \leq 
\left(1+ \frac{\alpha}{\log_2(c)}\right)  \sqrt{\frac{1+\alpha \log_2(D_1 \log_2(s/2))}{2 \pi \log_2(s/2)}} 
\left( \frac{e^2\alpha (s/4c)^{1/(2\alpha)}}{2 \log_2(s/2)} \cdot  \log_{2^{\alpha}c} (2^{1/\alpha}D_1 \log_2(s/2))\right)^{\log_{2^{\alpha+1}c}(s/2)}\\
& \leq C_{\alpha,c} s^{\gamma_{\alpha,c} \log(s)}
\end{align*}
for suitable constants $C_{\alpha,c}, \gamma_{\alpha,c}>0$ depending only on $\alpha$ and $c$.

(c) For exponentially growing weights $v_j = \beta^j$ we have  
\[
d = \max\{j: \exists \bnu \in \Indx_0^s \mbox{ with } \nu_j \neq 0 \} = \max\{j : v_j \leq s/4\} = \lfloor \frac{\log_2(s/4)}{\log_2(\beta)} \rfloor.
\]
We set $a_j = 2 \log_2(v_j) =  2j \log_2(\beta)$, $A = \log_2(s/2)$. 
The estimate \eqref{Indx:Size:est} now reads
\begin{align*}
\#\Indx_0^s & \leq 1 
+ \sum_{k=1}^{\min\{d,\lfloor A \rfloor\}} \frac{(A-k)^k}{k!} \sum_{\substack{S \subset [d]\\ \#S=k\\ \sum_{j \in S} a_j \leq A-k}}  \prod_{j \in S} a_j^{-1} \\
& \leq 1 + \sum_{\substack{k=1\\ 2\log_2(\beta) \sum_{j=1}^k j \leq A-k}}^{\min\{d,\lfloor A \rfloor\}} \frac{(A-k)^k}{k!} 
\sum_{\substack{S \subset [d]\\ \#S=k}}  (2 \log_2(\beta))^{-k} \prod_{j \in S} j^{-1}. \\
\end{align*}
Note that since $\sum_{j=1}^k j = k(k+1)/2 \geq k^2/2$, 
the condition $2\log_2(\beta) \sum_{j=1}^k j \leq A-k$ 
is implied by $k \leq \sqrt{\frac{A}{\log_2(\beta)}} < d$. 
Therefore,
\begin{align*}
\#\Indx_0^s  & \leq 1 + \sum_{k=1}^{\lfloor \sqrt{A/\log_2(\beta)} \rfloor} \left(\frac{A}{2 \log_2(\beta)}\right)^k \binom{d}{k} \frac{1}{(k!)^2} 
\\
& \leq 1 + 
\left(\frac{A}{2\log_2(\beta)}\right)^{\lfloor \sqrt{A/\log_2(\beta)}\rfloor} \frac{1}{(\lfloor \sqrt{A/\log_2(\beta)}\rfloor!)^2} 
\sum_{k=1}^{\lfloor \sqrt{A/\log_2(\beta)} \rfloor} \binom{d}{k} 
\\
& \leq 1 +  \left(\frac{A}{2\log_2(\beta)}\right)^{\sqrt{A/\log_2(\beta)}} \frac{1}{2 \pi \sqrt{A/\log_2(\beta)} (A/\log_2(\beta)/e^2)^{\sqrt{A/\log_2(\beta)}}} 
 \left(\frac{ed}{\sqrt{A/\log_2(\beta)}}\right)^{\sqrt{A/\log_2(\beta)}} 
\\
& = 1 + \frac{1}{2\pi \sqrt{A/\log_2(\beta)}} \left(\frac{e^3d}{\sqrt{A/\log_2(\beta)}}\right)^{\sqrt{A/\log_2(\beta)}} \\
& \leq 1 + \frac{1}{2\pi \sqrt{\log_2(s/2)/\log_2(\beta)}} \left(\frac{e^3 \log_2(s/4)/\log_2(\beta)}{\sqrt{\log_2(s/2)/\log_2(\beta)}}\right)^{\sqrt{\log_2(s/2)/\log_2(\beta)}} 
\\
 & \leq 1 +\frac{1}{2\pi\sqrt{\log_\beta(s/2)}} \left(e^3 \sqrt{\log_\beta(s/2)}\right)^{\sqrt{\log_\beta(s/2)}}.
 \end{align*}
This concludes the proof.

\section*{Acknowledgements}
HR~would like to thank Albert Cohen for enlightening discussions on solving parametric 
PDEs and for his warm hospitality during a stay at
Universit{\'e} Pierre et Marie Curie in Paris in 2010. HR is also grateful for the hospitality 
of the Seminar of Applied Mathematics at ETH Zurich during a stay in spring 2011. 
HR acknowledges funding from the Hausdorff Center of Mathematics at the University of Bonn 
and from the European research council (ERC) 
via the Starting Grant StG 258926. CS is supported by the ERC via the Advanced Grant AdG 247277. 
HR and CS acknowledge the support of ICERM at Brown University during the final stages 
of preparation of this manuscript, 
during the special semester on High-dimensional approximation in September 2014.

\bibliographystyle{amsplain}
\bibliography{CSPDE}
\end{document}